\documentclass[12pt]{article}
\usepackage{geometry,amsmath,amssymb,graphicx,amscd,amsthm,url}
\geometry{letterpaper}

\newcommand{\dueto}[1]{\textup{\textbf{(#1) }}}
\newcommand{\emdash}{---}

\newcommand{\nin}{\not\in}
\newcommand{\tmem}[1]{{\em #1\/}}
\newcommand{\tmop}[1]{\ensuremath{\operatorname{#1}}}
\newcommand{\tmscript}[1]{\text{\scriptsize{$#1$}}}
\newcommand{\tmtextit}[1]{{\itshape{#1}}}
\newcommand{\tmtexttt}[1]{{\ttfamily{#1}}}
\newtheorem{theorem}{Theorem}
\newtheorem{lemma}[theorem]{Lemma}
\newtheorem{proposition}[theorem]{Proposition}


\begin{document}

\title{Whittaker Functions and Demazure Operators}
\author{Ben Brubaker, Daniel Bump, and
Anthony Licata}
\maketitle

\begin{abstract}
We consider a natural basis of the Iwahori fixed vectors in the
Whittaker model of an unramified principal series representation of
a split semisimple $p$-adic group, indexed by the Weyl group. We 
show that the elements of this basis may be computed from one another by
applying Demazure-Lusztig operators. The precise identities involve correction
terms, which may be calculated by a combinatorial algorithm that is identical
to the computation of the fibers of the Bott-Samelson resolution of a Schubert
variety. The Demazure-Lusztig operators satisfy the braid and quadratic
relations satisfied by the ordinary Hecke operators, and this leads to an
action of the affine Hecke algebra on functions on the maximal torus of the
L-group.  This action was previously described by Lusztig using
equivariant K-theory of the flag variety, leading to the proof of the
Deligne-Langlands conjecture by Kazhdan and Lusztig. In the present paper, the
action is applied to give a simple formula for the basis vectors of the
Iwahori Whittaker functions.
\end{abstract}

\medbreak\noindent
It is well known that there are relations between the representation theory of
$p$-adic groups and the topology of flag varieties. More precisely, let $G$ be
a split semisimple group over a nonarchimedean local field $F$, and let
$\hat{G} ( \mathbb{C})$ be the (connected) Langlands dual group. Then the
representation theory of $G (F)$ is closely related to the topology of the
complex flag variety $X$ of $\hat{G} ( \mathbb{C})$. For example, the same
affine Hecke algebra appears as both contexts. On the one hand, Iwahori and
Matsumoto~{\cite{IwahoriMatsumoto}}, in introducing this important ring,
showed that it is a convolution ring of functions on $G (F)$ acting on the
Iwahori fixed vectors of any representation. On the other hand,
Lusztig~{\cite{LusztigK1}} interpreted it as a ring of endomorphisms of the
equivariant K-theory of the flag variety of $\hat{G} (F)$.

With this in mind, we study Whittaker functions for unramified principal
series representations of $G (F)$. Our investigations were motivated by
earlier work of Reeder~{\cite{ReederVector}} and we begin with a brief summary
of his results. Further discussion of this may be found in
Section~\ref{crresults}.

Let $\tau$ be an unramified character of the split maximal torus $T (F)$ where
$B = TN$ is the standard Borel subgroup. Then we may form the principal series
representation $M (\tau) = \tmop{Ind}_B^G (\tau)$. A Whittaker model for $M
(\tau)$ is an intertwining map $\mathcal{W}_{\tau} : M (\tau) \rightarrow
\tmop{Ind}_N^G (\psi)$ where $\psi$ is in an open $T$-orbit of the space of
characters of $N$; such an intertwiner is unique up to constant. An important
problem is the characterization of functions $\mathcal{W}_{\tau} (\phi) : G
\rightarrow \mathbb{C}$ for distinguished vectors $\phi \in M (\tau)$.

Let $J$ be the Iwahori subgroup of $G (F)$. The space $M (\tau)^J$ of
$J$-fixed vectors has dimension equal to the order of the Weyl group $W$ of
$G$. We will be primarily concerned with two particular bases of $M (\tau)^J$.
First, we have the ``standard basis'' $\{\Phi_w^{\tau} \; |
\; w \in W\}$ whose elements are defined by
\begin{equation}
\label{phidef}\Phi_w^{\tau} (buk) = \left\{ \begin{array}{ll}
     \delta^{1 / 2} \tau (b) & \text{if $u = w$,}\\
     0 & \tmop{otherwise},
   \end{array} \right. \hspace{1em} (b \in B (F), u \in W \text{and } k \in J)
\end{equation}
where $\delta$ is the modular character of $B (F)$. These functions are well-defined
according to the decomposition $G = \coprod_{w \in W} BwJ$. Our formulas will
be simpler if we use the basis obtained by summing $\Phi_w$'s according to the
Bruhat order:
\begin{equation}
\label{phitildef}
\tilde{\Phi}_w^{\tau} = \sum_{u \geqslant w} \Phi^{\tau}_u,
\end{equation}
so that if $u, w \in W$
\[ \tilde{\Phi}_w^{\tau} (buk) = \left\{ \begin{array}{ll}
     \delta^{1 / 2} \tau (b) & \text{if $u \geqslant w$,}\\
     0 & \tmop{otherwise} .
   \end{array} \right. \]
In particular $\tilde{\Phi}^{\tau}_1$ is the standard spherical vector:
\[ \tilde{\Phi}^{\tau}_1 (bk) = \delta^{1 / 2} \tau (b) \hspace{2em} b \in B,
   k \in K. \]
In {\cite{ReederVector}} Reeder investigated the functions $\mathcal{W}_{\tau}
( \tilde{\Phi}_w^{\tau})$ evaluated on the maximal torus, using methods of
Casselman and Shalika. The key idea is that one can compute the effect of
intertwining operators on Iwahori fixed vectors before or after applying the
Whittaker functional.

Let $\Lambda$ be the weight lattice of $\hat{G}$, which is the group $X^{\ast}
( \hat{T})$ of rational characters of the maximal torus $\hat{T}$ of $\hat{G}$
that is dual to $T$.  Thus $\Lambda$ may be identified with the group
$X_{\ast} (T)$ of rational one parameter subgroups of $T$, isomorphic to $T
(F) / T ( \mathfrak{o})$, where $\mathfrak{o}$ is the ring of integers in
$F$. If $\lambda$ is a dominant weight of $\hat{T}$, let $a_{\lambda} \in T
(F)$ be a representative of the corresponding coset in $T (F) / T
(\mathfrak{o})$.  Casselman~{\cite{Casselman}} describes, in addition to the
basis $\Phi_w$ of Iwahori fixed vectors, a more subtly defined basis which we
refer to as the \tmtextit{Casselman basis} $\{f_w^{\tau} \}$. Reeder gives a
simple formula for $\mathcal{W}_{\tau} (f_w^{\tau}) (a_{\lambda})$, but a
similar closed formula for $\mathcal{W}_{\tau} ( \tilde{\Phi}_w^{\tau})
(a_{\lambda})$ is more difficult. However he found a recursive algorithm for
the change of basis between the Casselman basis and the basis $\Phi_w$. This
algorithm, which we implemented in \tmtexttt{Sage}, allowed us to compute the
Whittaker functions evaluated at any fixed $a_{\lambda}$ and these
calculations were an important tool in our investigation.

Furthermore, Reeder saw that the $\mathcal{W}_{\tau} ( \tilde{\Phi}_w^{\tau})
(a_{\lambda})$ should be related to the coherent cohomology of line bundles
over the flag varieties and their Schubert varieties, and gave such
interpretations for particular Weyl group elements{\emdash}those corresponding
to the long element of a Levi subgroup of $W$.

We will exhibit connections between Iwahori Whittaker functions and the geometry
of Schubert varieties that hold for all Weyl group elements. Our
starting point is a recursive relation for the Whittaker function of
$\tilde{\Phi}_w$ in terms of Bruhat order (Theorem~\ref{magictheorem}). It is
obtained by considering an operator on $M (\tau)$ which acts as an idempotent
in the isomorphism with the finite Hecke algebra. From this we prove that the
Iwahori Whittaker functions become Demazure characters when $q^{- 1}$ is
specialized to~$0$, where $q$ is the cardinality of the residue field. See
Theorem~\ref{wasthmone} for a precise statement.

Instead of the Whittaker model, one may consider the Iwahori fixed vectors in
the spherical model of the representation. This has been investigated by
Ion~\cite{Ion}, who also finds that Demazure operators play a role, and
(generalizing the Macdonald formula for the spherical function) the functions
on the $p$-adic group may be expressed in terms of the nonsymmetric Macdonald
polynomials.

We briefly recall the definition of Demazure characters and their relation to
cohomology of Schubert varieties. Given $w \in W$, let $X_w$ be the
corresponding Schubert cell in $X = \hat{G} ( \mathbb{C}) / \hat{B} (
\mathbb{C})$, where $\hat{B}$ is the standard Borel subgroup. Thus $X_w$ is
the closure of the open Schubert cell $Y_w$, which is the image in $X$ of
$\hat{B} (\mathbb{C}) w \hat{B} (\mathbb{C})$. If $\lambda$ is
a dominant weight of $\hat{G}$, then $\lambda$ determines a line bundle
$\mathcal{L}_{\lambda}$ on $X$. The space $H^0 (X_w, \mathcal{L}_{\lambda})$
of sections is a module for the standard maximal torus $\hat{T} ( \mathbb{C})$
of $\hat{G} ( \mathbb{C})$, and the Demazure character formula computes its
character.

To describe the Demazure character formula, let $\alpha_1, \cdots, \alpha_r$
be the simple roots of $\hat{G}$ and let $s_1, \cdots, s_r$ be the
corresponding simple reflections. The Demazure operators are defined on the
ring $\mathcal{O}( \hat{T})$ of rational functions on $\hat{T}$ by
\begin{equation}
\label{demdef}
\partial_{\alpha_i} f ( \boldsymbol{z}) = \partial_i f ( \boldsymbol{z}) = \frac{f
    ( \boldsymbol{z}) - \boldsymbol{z}^{- \alpha_i} f (s_i \boldsymbol{z})}{1 -
   \boldsymbol{z}^{- \alpha_i}},
\end{equation}
for $\boldsymbol{z} \in \hat{T} ( \mathbb{C})$. The operators $\partial_i$ are
idempotent and satisfy $s_i \circ \partial_i = \partial_i$. They also satisfy
the braid relations for the Weyl group. This implies that if $\mathfrak{w} =
(s_{h_1}, \cdots, s_{h_d})$ is a reduced word for $w$, so that $w = s_{h_1}
\cdots s_{h_d}$ is a reduced decomposition of $w$ into a product of simple
reflections then we may define $\partial_w : = \partial_{h_1} \cdots
\partial_{h_d}$ and this is well-defined. In particular, if $w_0$ is the long
element and $\lambda$ is a dominant weight, then $\partial_{w_0}
\boldsymbol{z}^{\lambda}$ is the character of the irreducible representation with
highest weight $\lambda$. The Demazure character formula asserts that for an
arbitrary Weyl group element $w$ the trace of $\boldsymbol{z} \in \hat{T} (
\mathbb{C})$ on $H^0 (X_w, \mathcal{L}_{\lambda})$ is $\partial_w
\boldsymbol{z}^{\lambda}$.

A key ingredient of the proof of the Demazure character formula is the
Bott-Samelson resolution of the (possibly singular) Schubert variety $X_w$.
Depending on the reduced word $\mathfrak{w}$ we may construct the nonsingular
\tmtextit{Bott-Samelson variety} $Z_{\mathfrak{w}}$. See
Bott-Samelson~{\cite{BottSamelson}}, Demazure~{\cite{DemazureDesing}},
Andersen~{\cite{Andersen}} and Kumar~{\cite{Kumar}}. There is then a
birational morphism $Z_{\mathfrak{w}} \longrightarrow X_w$. Since the map is
birational, the fiber over a generic point is just is a single point. However
even if $X_w$ is nonsingular, this map may have nontrivial fibers over some
points. Pulling the line bundle back to $Z_{\mathfrak{w}}$ does not change its
space of sections, and over the Bott-Samelson variety, the cohomology of
$\mathcal{L}_{\lambda}$ may be computed inductively using the Leray spectral
sequence, leading to the Demazure character formula.

On the other hand, returning to Whittaker functions over a nonarchimedean
local field $F$, $\boldsymbol{z} \in \hat{T} (\mathbb{C})$ parametrizes an
unramified character $\tau = \tau_{\boldsymbol{z}}$ of $T (F)$, which may be
parabolically induced to $G (F)$. The set $\{ \mathcal{W}_{\tau}
\tilde{\Phi}^{\tau}_w \}$ indexed by elements $w$ of the Weyl group
gives a natural basis of the space of Iwahori fixed vectors in the
Whittaker model. 
If $\lambda$ is a dominant weight of $\hat{T}$, then $\lambda$ parametrizes a
coset in $T (F) / T (\mathfrak{o})$, where $\mathfrak{o}$ is the ring of
integers in $F$. Let $a_{\lambda} \in T (F)$ be a representative. Then
\[ \mathcal{W}_{\tau} \tilde{\Phi}^{\tau}_w (a_{\lambda}) = \delta^{1 / 2} (a_{\lambda})
   P_{\lambda, w} (\boldsymbol{z}, q^{- 1}) \]
where $\delta$ is the modular character of $B (F)$, and $P_{\lambda, w}$ is a
rational function in $\boldsymbol{z}$ -- that is, a finite linear combination of
weights -- whose coefficients are polynomials in $q^{- 1}$. The factor
$\delta^{1 / 2} (a_{\lambda})$ is a constant, independent of $\boldsymbol{z}$,
and removing it is harmless. For example, multiplying by $\delta^{1 / 2}
(a_{\lambda})$ commutes with the Demazure operators.

We will use Bott-Samelson varieties to exhibit a further connection between
representations and geometry---a certain dictionary between Whittaker
functions and Schubert varieties. Our philosophy is that in the geometric
picture, the various varieties that appear, including Schubert and
Bott-Samelson varieties, can be associated with polynomials depending on the
spectral parameter $\boldsymbol{z}$, a dominant weight $\lambda$ and a parameter
$v$. Thus given a variety $V$ with an action of the Borel subgroup
of $\hat G$, together with an equivariant map $V\to X$, one may hope
to construct an invariant $\boldsymbol{V}(\boldsymbol{z},\lambda,v)$ which
is a polynomial in $v$. We will not give a systematic theory of such
invariants, but we will make such an association for the particular varieties
that come up in our study of Whittaker functions.

In order to make this dictionary more apparent, let us use the notation
\begin{equation}
\label{bigxdef}
\boldsymbol{X}_w ( \boldsymbol{z}, \lambda, v) := P_{- w_0 \lambda, ww_0} (
   \boldsymbol{z}^{- 1}, v),
\end{equation}
where $v$ is an indeterminate and $w_0$ is the long Weyl group element. Note
that if $\lambda$ is a dominant weight, then so is $- w_0 \lambda$. The
construction of $\boldsymbol{X}_w$ was by way of Whittaker functions, but we are
now thinking of it as an invariant associated to the Schubert variety $X_w$. We
will also use the notation $\boldsymbol{Y}_w$ where we set
\[ \boldsymbol{X}_w = \sum_{u \leqslant w} \boldsymbol{Y}_u, \hspace{2em} \text{and so}
\hspace{2em} \boldsymbol{Y}_w = \sum_{u \leqslant w} (- 1)^{l (w) - l (u)} \boldsymbol{X}_u. \]
Thus $\boldsymbol{Y}_w$ may be obtained from Whittaker functions using $\mathcal{W}_{\tau}
\Phi_w^{\tau}$ instead of $\mathcal{W}_{\tau} \tilde{\Phi}^{\tau}_w$.

We will see that if we replace $v$ by $0$ then $\boldsymbol{X}_w$ becomes a
Demazure character; that is,
\begin{equation}
  \label{xasdc} \boldsymbol{X}_w ( \boldsymbol{z}, \lambda, 0) = \partial_w
  \boldsymbol{z}^{\lambda} .
\end{equation}
Since the Demazure character is the coherent cohomology of a line bundle on
$X_w$, this is the first evidence of a connection between Whittaker functions
and the geometry of the Schubert varieties.

In order to describe a deeper connection, it is necessary to understand the
complete polynomial $P_{\lambda, w}$, not just its constant term, in relation
to Schubert varieties. Corresponding to the Bott-Samelson variety
$Z_{\mathfrak{w}}$, define
\begin{equation}
  \label{bfzdef} \boldsymbol{Z}_{\mathfrak{w}} ( \boldsymbol{z}, \lambda, v) =
  \mathfrak{D}_{h_1} \cdots \mathfrak{D}_{h_d} \boldsymbol{z}^{\lambda}
\end{equation}
where
\begin{equation}
  \label{frakddef}  \mathfrak{D}_i = (1 - v \boldsymbol{z}^{- \alpha_i})
  \partial_i .
\end{equation}
The Bott-Samelson variety may be built up from a point by successive fiberings
by $\mathbb{P}^1$, and we think of the the application of the operators
$\mathfrak{D}_i$ as an algebraic analog of this process.

Our thesis is that the relationship between $\boldsymbol{X}_w$, which is
essentially the Whittaker function, and $\boldsymbol{Z}_{\mathfrak{w}}$ is
identical to the relationship between the Schubert varieties $X_w$ and the
Bott-Samelson varieties $Z_{\mathfrak{w}}$. The $\boldsymbol{Y}_w$ then
correspond to the open Schubert varieties $Y_w.$

To explain further, we may write
\[ \boldsymbol{X}_w =\boldsymbol{Z}_{\mathfrak{w}} - v \times \text{``correction
   terms''} . \]
The correction terms are linear combinations of $\boldsymbol{Z}_{\mathfrak{u}}$
for a few particular reduced words $\mathfrak{u}$ of $u \leqslant w$ in the
Bruhat order. The determination of the correction terms is a combinatorial
matter, and the point is that the combinatorics are identical between the
Whittaker ($\boldsymbol{X}_w$ and $\boldsymbol{Z}_{\mathfrak{w}}$) and the
Schubert ($X_w$ and $Z_{\mathfrak{w}}$) relationships.

In order to explain the relationship most simply, it is useful to introduce
\tmtextit{partial Bott-Samelson varieties} $Z_{s, w}$, where $s = s_{\alpha}$
is a simple reflection and $w$ is a Weyl group element such that $sw > w$.
This is a fiber bundle over $\mathbb{P}^1$ in which the fibers are $X_w$.
Furthermore, it is equipped with a birational morphism $Z_{s, w} \to X_{sw}$.
In keeping with the algebraic analogy described above, we define
\[ \boldsymbol{Z}_{s, w} = \mathfrak{D}_i \boldsymbol{X}_w, \hspace{1em}
   \text{where} \hspace{1em} \alpha = \alpha_i . \]
Just as in~(\ref{bfzdef}), the operator $\mathfrak{D}_i$
should be thought of as an algebraic analog of the $\mathbb{P}^1$.

We find that
\[ \boldsymbol{X}_{sw} =\boldsymbol{Z}_{s, w} - v \times \text{``correction
   terms''.} \]
Here the correction terms have the following combinatorial description. Let $H
(w, s)$ be the set of $u \in W$ such that both $u, su \leq w$ in the Bruhat order. 
If $u \in H (w,s)$ and $t \leqslant u$ then $t \in H (w, s)$. The set $H (w, s)$ may be empty
or it may have a few maximal elements $u_1, \cdots, u_k$. If it is empty, then
$\boldsymbol{X}_{sw} =\boldsymbol{Z}_{s, w}$, while if $H (w, s)$ has a unique
maximal element $u_1$, then
\[ \boldsymbol{X}_{sw} =\boldsymbol{Z}_{s, w} - v\boldsymbol{X}_{u_1} . \]
If $H (w, s)$ has two maximal elements $u_1$ and $u_2$, and $\{x \leqslant
u_1, u_2 \}$ has a unique maximal element $u_3$, then
\begin{equation}
  \label{correxam} \boldsymbol{X}_{sw} =\boldsymbol{Z}_{s, w} -
  v\boldsymbol{X}_{u_1} - v\boldsymbol{X}_{u_2} + v\boldsymbol{X}_{u_3} .
\end{equation}
This latter case occurs, for example, in
Type $A_3$ when $s = s_1$ and $w = s_2 s_1 s_3 s_2$, with $u_1 = s_1 s_2 s_1$,
$u_2 = s_1 s_3 s_2$ and $u_3 = s_1 s_2$.
This illustrates the combinatorial description of the Whittaker functions in
terms of the operators $\mathfrak{D}_i$.

Now let us consider an analogous geometric problem leading to the
\tmtextit{same combinatorial decomposition} as in (\ref{correxam}). We recall
that there is a birational morphism $\mu : Z_{s, w} \longrightarrow
X_{sw}$. The fiber over every point $x \in X_{sw}$ is either a point or a
$\mathbb{P}^1$. Let us determine the subvariety $E$ of $X_{sw}$ where the
fiber is $\mathbb{P}^1$. The combinatorics of this question are again
controlled by the same set $H (w, s)$. Indeed, $E$ is just the union of the
$X_u$ as $u$ runs through the maximal elements of $H (w, s)$. For
computational purposes, however, it may be useful to describe $E$ by recording
the overlapping of its irreducible components. Thus in the case where $H (w,
s)$ has two maximal elements $u_1$ and $u_2$, and $\{x \leqslant u_1, u_2 \}$
has a unique maximal element $u_3$, $E$ has two irreducible components
$X_{u_1}$ and $X_{u_2}$, whose intersection is $X_{u_3}$. So we write formally
\begin{equation}
  E = X_{u_1} + X_{u_2} - X_{u_3} . \label{suggestive}
\end{equation}
The three Weyl group elements that appear with nonzero coefficient are the
same in (\ref{correxam}) and (\ref{suggestive}), and we will prove that this
is always true.

The notation (\ref{suggestive}) is intended to be suggestive, but we can make
it precise as follows. If $S$ is a subset of $X$ we will write symbolically
\begin{equation}
  \label{suggested} S = \sum_{w \in W} c (w) X_w, \hspace{2em} c (w) \in
  \mathbb{Z},
\end{equation}
to mean that if $y \in X$ then
\[ \sum_{\tmscript{\begin{array}{c}
     w \in W\\
     y \in X_w
   \end{array}}} c (w) = \left\{ \begin{array}{ll}
     1 & \text{if $y \in S$}\\
     0 & \text{otherwise.}
   \end{array} \right. \]
We may now state a theorem connecting the correction terms in the expansions
of the Whittaker functions and the fibers of the maps $Z_{s, w}
\longrightarrow X_{sw}$.

\begin{theorem}
  \label{thmain}Let $s$ be a simple reflection, and let $w \in W$ such that
  $sw > w$. Then there exist integer coefficients $c_{s, w} (u) = c (u)$
  defined for $u \in W$ such that
  \[ \boldsymbol{X}_{sw} =\boldsymbol{Z}_{s, w} - v \sum_u c (u)\boldsymbol{X}_u .
  \]
  Moreover, there is a corresponding geometric identity
  \[ S = \sum_{u \in W} c (u) X_u \]
  where $S$ is the subset of $X_{sw}$ such that the fiber of the map $Z_{s, w}
  \longrightarrow X_{sw}$ over $y \in X_{sw}$ is a point if $y \nin S$, or
  $\mathbb{P}^1$ if $y \in S$. The coefficient $c (u)$ is zero unless both $u,
  su < w$.
\end{theorem}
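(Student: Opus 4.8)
The plan is to split the statement into a combinatorial reduction, a geometric computation, and an algebraic computation, arranged so that the agreement of the two sets of coefficients becomes automatic once each side is expressed through the ideal $H(w,s)$. The unifying observation is that, because $\boldsymbol{X}_w = \sum_{u \le w} \boldsymbol{Y}_u$, the defining relation (\ref{suggested}) for the coefficients $c(u)$ is equivalent, after resumming over the Bruhat order, to the single clean identity
\[ \sum_u c(u)\,\boldsymbol{X}_u \;=\; \sum_{t \in H(w,s)} \boldsymbol{Y}_t , \]
since $\sum_{u \ge t} c(u)$ is the value on the cell indexed by $t$ of the indicator of the subset $S$, which equals $1$ exactly when $t \in H(w,s)$ (using that $H(w,s)$ is a lower order ideal). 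Thus both halves of the theorem reduce to the assertion that, in the $\boldsymbol{Y}$-basis, the correction term is supported precisely on $H(w,s)$ with all coefficients equal to $1$. The integrality of the $c(u)$ and their vanishing unless $u, su < w$ then follow from M\"obius inversion on the Bruhat order, together with the facts that $H(w,s)$ is downward closed and contains neither $w$ nor $sw$ (as $sw > w$).

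First I would treat the geometric side. Writing $P_s$ for the minimal parabolic attached to $s$, the partial Bott--Samelson variety is $Z_{s,w} = P_s \times^{\hat{B}} X_w$ with $\mu[p,x] = p\cdot x$. The fiber of $\mu$ over $y$ is $\{\,p\hat{B} \in P_s/\hat{B} : p^{-1}y \in X_w\,\}$, and since $P_s/\hat{B} \cong \mathbb{P}^1$ has no $P_s$-fixed point in $X$, this fiber is all of $\mathbb{P}^1$ exactly when $P_s y \subseteq X_w$ and is otherwise a single point. The locus $S = E$ where this holds is closed and $P_s$-stable, hence a union of Schubert varieties; evaluating on $\hat{T}$-fixed points $vB$ and using that the $P_s$-curve through $vB$ lies in $X_w$ iff both $v, sv \le w$ identifies the $\hat{T}$-fixed points of $E$ with $H(w,s)$. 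A maximal $u \in H(w,s)$ must satisfy $su < u$ (otherwise $su \in H(w,s)$ would be strictly larger), so $X_u$ is $P_s$-stable and $E = \bigcup_{u \in \max H(w,s)} X_u$. Expanding this union by inclusion--exclusion produces integer coefficients supported on $H(w,s)$ and satisfying (\ref{suggested}), which is the geometric identity.

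The substantive part is the algebraic identity, which I would prove by induction on $\ell(w)$ using the recursion of Theorem~\ref{magictheorem} and the quadratic and braid relations satisfied by the operators $\mathfrak{D}_i$. Applying $\mathfrak{D}_i$ to $\boldsymbol{X}_w = \sum_{u \le w} \boldsymbol{Y}_u$ and comparing with $\boldsymbol{X}_{sw} = \sum_{u \le sw} \boldsymbol{Y}_u$, the goal is to show that $\mathfrak{D}_i \boldsymbol{X}_w - \boldsymbol{X}_{sw} = v \sum_{t \in H(w,s)} \boldsymbol{Y}_t$. I would establish this by determining the action of $\mathfrak{D}_i$ on each $\boldsymbol{Y}_u$: its leading ($v$-free) behaviour accounts for the new cells with $su > u$ and reconstructs $\boldsymbol{X}_{sw}$, consistently with the specialization (\ref{xasdc}) to the Demazure character $\partial_{sw}\boldsymbol{z}^{\lambda}$, while the $v$-linear term of $\mathfrak{D}_i = (1 - v\boldsymbol{z}^{-\alpha_i})\partial_i$ contributes exactly the cells $t$ for which both $t$ and $st$ already lie weakly below $w$. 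The main obstacle is precisely this bookkeeping: one must verify, compatibly with the $v \to 0$ limit, that the cells produced by $\mathfrak{D}_i$ neither overshoot $sw$ nor omit any element of $H(w,s)$, so that the $\boldsymbol{Y}$-support of the correction is exactly $H(w,s)$ with unit coefficients. Once this one identity is in hand, the reduction of the first paragraph forces the algebraic coefficients to coincide with the geometric ones, completing the proof.
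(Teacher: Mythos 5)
Your overall architecture matches the paper's: reduce both halves of the theorem to the statement that, in the $\boldsymbol{Y}$-basis (equivalently, on open Schubert cells), the correction/exceptional locus is exactly the order ideal $H(w,s)$ with unit coefficients, so that the two families of coefficients $c(u)$ agree by M\"obius inversion. Your geometric argument is essentially correct and is a legitimate variant of the paper's: where the paper counts $\hat{T}$-fixed points in the fibers and uses a Bialynicki-Birula/Euler-characteristic argument (Lemma~\ref{fixedpointseuler} and Proposition~\ref{fiberdet}), you observe that the exceptional locus $\{y : P_s y \subseteq X_w\}$ is closed and $P_s$-stable, hence a union of Schubert varieties determined by its $\hat{T}$-fixed points. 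The one step you should justify is the claim that the fiber is ``otherwise a single point'': a nonempty proper closed subset of $\mathbb{P}^1$ is a priori only a finite set, and ruling out several points requires either Zariski's main theorem (using normality of $X_{sw}$) or the paper's explicit fixed-point count.

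The genuine gap is in the algebraic half. You correctly identify the target identity $\mathfrak{D}_i\boldsymbol{X}_w - \boldsymbol{X}_{sw} = v\sum_{t\in H(w,s)}\boldsymbol{Y}_t$, but the derivation you propose --- ``determining the action of $\mathfrak{D}_i$ on each $\boldsymbol{Y}_u$'' followed by unspecified ``bookkeeping'' --- is not carried out, and it cannot be carried out formally at this stage: $\boldsymbol{Y}_u$ is defined as a Whittaker function value, and there is no a priori formula for $\mathfrak{D}_i\boldsymbol{Y}_u$ without the $p$-adic input (the clean formula $\mathfrak{D}_i\boldsymbol{Y}_u = (1+T_i)T_u\cdot\boldsymbol{z}^\lambda$ is equivalent to Theorem~\ref{amazing}, which is proved later and depends on this very circle of results). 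What actually closes the gap is that the identity you want is literally equation~(\ref{firstmagic}) of Theorem~\ref{magictheorem}, conjugated by the involution $\theta:\boldsymbol{z}\mapsto\boldsymbol{z}^{-1}$ (which intertwines $\partial'_\alpha$ and $\partial_\alpha$) and reindexed by $w\mapsto ww_0$, using $u\in H(w,s)$ if and only if $uw_0\in H'(ww_0,s)$ together with the definitions (\ref{xdefined}) and (\ref{ydefined}); this is Proposition~\ref{correctionterms}, and its proof rests on Casselman's intertwining-operator identities (Propositions~\ref{casselslick} and~\ref{demazuremi}) and the Deodhar property (Proposition~\ref{deodchar}), not on any direct manipulation of the difference operators. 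With that substitution your argument becomes the paper's proof.
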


In the special case $v = 0$, this immediately implies the specialization to
Demazure characters in (\ref{xasdc}). When $v = 1$ they also
have a simple specialization. Let $\rho$ be the Weyl vector for $\hat{G} (
\mathbb{C})$, that is, half the sum of the positive roots.

\begin{theorem}
  \label{qone}Given a dominant weight $\lambda$, for any $w \in W$:
  \begin{equation}
    \label{permutahedron} \boldsymbol{X}_w ( \boldsymbol{z}, \lambda, 1) = \sum_{u
    \leqslant w} (- 1)^{l (u)} \boldsymbol{z}^{u (\rho + \lambda) - \rho} .
  \end{equation}
\end{theorem}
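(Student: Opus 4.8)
The plan is to induct on the length $l(w)$, using Theorem~\ref{thmain} as the recursion and the fact that the operators $\mathfrak{D}_i$ degenerate dramatically at $v=1$. Write $g(u):=(-1)^{l(u)}\boldsymbol{z}^{u(\rho+\lambda)-\rho}$, so the claimed right-hand side is $R_w:=\sum_{u\leqslant w}g(u)$. Since $\lambda$ is dominant, $\rho+\lambda$ is regular, so the weights $u(\rho+\lambda)$ are distinct as $u$ ranges over $W$; hence the $g(u)$ are linearly independent and it suffices to compare coefficients of each $g(t)$. First I would record the degeneration: setting $v=1$ in (\ref{frakddef}) cancels the factor $1-\boldsymbol{z}^{-\alpha_i}$ against the denominator in (\ref{demdef}), leaving the simple operator $D_i:=\mathfrak{D}_i|_{v=1}=1-\boldsymbol{z}^{-\alpha_i}s_i$, that is, $D_if(\boldsymbol{z})=f(\boldsymbol{z})-\boldsymbol{z}^{-\alpha_i}f(s_i\boldsymbol{z})$. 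A direct computation using $s_i\rho=\rho-\alpha_i$ then gives
\[ D_i g(u) = g(u) + g(s_iu); \]
the shift $-\alpha_i$ exactly absorbs the discrepancy $s_i\rho=\rho-\alpha_i$, while the sign $(-1)^{l(u)}$ flips to $(-1)^{l(s_iu)}$ because $s_iu$ and $u$ differ in length by one. Thus $D_i$ symmetrizes each pair $\{u,s_iu\}$.

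For the base case $w=1$, the function $\boldsymbol{X}_1(\boldsymbol{z},\lambda,v)$ is a single monomial independent of $v$ (it is the Whittaker function of the vector supported on the big Bruhat cell), so it is determined by its value (\ref{xasdc}) at $v=0$, namely $\partial_1\boldsymbol{z}^{\lambda}=\boldsymbol{z}^{\lambda}$; hence $\boldsymbol{X}_1(\boldsymbol{z},\lambda,1)=\boldsymbol{z}^{\lambda}=g(1)=R_1$. For the inductive step, given $w'\neq 1$ I would choose a left descent $s=s_i$ and set $w:=sw'$, so that $sw=w'>w$ and $l(w)<l(w')$. Specializing Theorem~\ref{thmain} at $v=1$ and applying the inductive hypothesis $\boldsymbol{X}_u(\cdot,\cdot,1)=R_u$ for all $u\leqslant w$ yields
\[ \boldsymbol{X}_{sw}(\boldsymbol{z},\lambda,1) = D_i R_w - \sum_u c(u)\,R_u. \]

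It then remains to evaluate both terms against the basis $\{g(t)\}$ (I write $[P]$ for the indicator that equals $1$ when $P$ holds and $0$ otherwise). From $D_ig(u)=g(u)+g(s_iu)$, the coefficient of $g(t)$ in $D_iR_w=\sum_{u\leqslant w}(g(u)+g(s_iu))$ is $[t\leqslant w]+[s_it\leqslant w]$. For the correction, expanding $R_u=\sum_{t\leqslant u}g(t)$ shows the coefficient of $g(t)$ in $\sum_u c(u)R_u$ is $\sum_{u\geqslant t}c(u)$. Here the geometric half of Theorem~\ref{thmain} enters: for $y$ in the cell $Y_t$ one has $y\in X_u\iff t\leqslant u$, so the identity $S=\sum_u c(u)X_u$, together with the description of $S$ as the union of the $X_{u_j}$ over the maximal elements of the down-closed set $H(w,s)$, gives $\sum_{u\geqslant t}c(u)=[t\in H(w,s)]=[\,t\leqslant w\text{ and }s_it\leqslant w\,]$ by the very definition of $H(w,s)$. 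Subtracting, the coefficient of $g(t)$ in $\boldsymbol{X}_{sw}(\cdot,\cdot,1)$ is
\[ [t\leqslant w]+[s_it\leqslant w]-[\,t\leqslant w\text{ and }s_it\leqslant w\,]=[\,t\leqslant w\text{ or }s_it\leqslant w\,] \]
by inclusion--exclusion. Finally the standard lifting property of Bruhat order (for a simple reflection $s$ with $sw>w$, one has $t\leqslant sw\iff t\leqslant w$ or $s_it\leqslant w$) identifies this with $[t\leqslant sw]$, the coefficient of $g(t)$ in $R_{sw}$. Hence $\boldsymbol{X}_{sw}(\cdot,\cdot,1)=R_{sw}$, completing the induction.

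The main obstacle is the middle equality: recognizing that the analytically defined correction coefficients $c(u)$, once summed against $R_u=\sum_{t\leqslant u}g(t)$, reproduce exactly the indicator of the purely combinatorial set $H(w,s)$. This is precisely the content of the geometric identity in Theorem~\ref{thmain}, so the argument is really a device for converting that geometric statement into the closed form (\ref{permutahedron}); everything else is the operator computation $D_ig(u)=g(u)+g(s_iu)$, routine inclusion--exclusion, and the lifting property.
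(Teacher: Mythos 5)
Your proof is correct and is essentially the paper's own argument: both verify that the right-hand side of (\ref{permutahedron}) satisfies the $v=1$ specialization of the recursion in Theorem~\ref{thmain}, via the computation $\mathfrak{D}_i|_{v=1}$ applied to $\boldsymbol{z}^{u(\rho+\lambda)-\rho}$, the identity $\sum_{u\geqslant t}c(u)=1$ exactly for $t\in H(w,s)$ (which the paper extracts from the definition of $c_{w,s}$ by M\"obius inversion rather than from the geometric half of Theorem~\ref{thmain}, but it is the same fact via Proposition~\ref{fibersymbolic}), and the lifting property of Proposition~\ref{deodchar}(ii). Your organization by comparing coefficients of the monomials $\boldsymbol{z}^{u(\rho+\lambda)-\rho}$ is only a cosmetic repackaging of the paper's sum manipulations.
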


As alluded to earlier, the operators $\mathfrak{D}_i$ have an interpretation in terms of Hecke
algebras. The Hecke algebra $\mathcal{H}_v$ associated with the Weyl
group $W$ has generators $T_i$, one for each Weyl group element $s_i$. They
satisfy the same braid relations as the $s_i$, together with the quadratic
relations $T_i^2 = (v - 1) T_i + v$. We will show that the operators
$\mathfrak{D}_i - 1$ satisfy these relations. Therefore we obtain a
representation of $\mathcal{H}_v$ on $\mathbb{C}[v,v^{-1}]\otimes\mathcal{O}(
\hat{T})$ in which $T_i$ acts as the operator $\mathfrak{D}_i - 1$. The
operators $\mathfrak{D}_i - 1$ are essentially the same as the
\tmem{Demazure-Lusztig operators} which first appeared in
Lusztig~\cite{LusztigK1}, equation~(8.1). See equation (\ref{usvsthem}) below.

Once this representation of $\mathcal{H}_v$ is constructed, the
functions $\boldsymbol{X}_w$ or $\boldsymbol{Y}_w$ have the following explicit
description. Denote the effect of $\phi \in \mathcal{H}_v$ on $f \in
\mathbb{C}[v,v^{-1}]\otimes\mathcal{O}( \hat{T})$ by $\phi \cdot f$. 

\begin{theorem}
\label{amazing}
Let $\lambda$ be a dominant weight. Then
\[ \boldsymbol{Y}_w (\lambda) = T_w \cdot \boldsymbol{z}^{\lambda}, \hspace{2em}
   \boldsymbol{X}_w (\lambda) = \sum_{u \leqslant w} T_u \cdot
   \boldsymbol{z}^{\lambda} . \]
\end{theorem}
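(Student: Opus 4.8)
The plan is to prove the equivalent statement $\boldsymbol{X}_w (\lambda) = C_w \cdot \boldsymbol{z}^{\lambda}$ for every $w$, where $C_w := \sum_{u \leqslant w} T_u \in \mathcal{H}_v$ (well-defined since the $T_i$ satisfy the braid and quadratic relations). The formula for $\boldsymbol{Y}_w$ then follows formally: the transformation $(P_u)_u \mapsto \bigl(\sum_{u \leqslant w} P_u\bigr)_w$ is unipotent and triangular for the Bruhat order, hence invertible, and both $(\boldsymbol{Y}_u)$ and $(T_u \cdot \boldsymbol{z}^{\lambda})$ are sent by it to the same tuple $(\boldsymbol{X}_w)$, forcing $\boldsymbol{Y}_w = T_w \cdot \boldsymbol{z}^{\lambda}$. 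I would argue by induction on $l (w)$, using that every $w \neq 1$ can be written $w = s_i w'$ with $s_i w' > w'$ and $l (w') = l (w) - 1$, so that Theorem~\ref{thmain} expresses $\boldsymbol{X}_w$ in terms of $\boldsymbol{X}_{w'}$ and the $\boldsymbol{X}_u$ with $u < w'$.

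The heart of the matter is the purely Hecke-algebraic identity
\[ C_{s_i w'} = (T_i + 1)\, C_{w'} - v \sum_u c (u)\, C_u \qquad \text{in } \mathcal{H}_v, \]
with the \emph{same} coefficients $c (u)$ as in Theorem~\ref{thmain}. Granting it, the induction closes at once: since $\mathfrak{D}_i = T_i + 1$, the inductive hypotheses $\boldsymbol{X}_{w'} = C_{w'} \boldsymbol{z}^{\lambda}$ and $\boldsymbol{X}_u = C_u \boldsymbol{z}^{\lambda}$ combined with Theorem~\ref{thmain} give $\boldsymbol{X}_{s_i w'} = \mathfrak{D}_i \boldsymbol{X}_{w'} - v \sum_u c (u) \boldsymbol{X}_u = C_{s_i w'} \boldsymbol{z}^{\lambda}$. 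The base case is $\boldsymbol{X}_1 = \boldsymbol{z}^{\lambda}$, which matches $C_1 \cdot \boldsymbol{z}^{\lambda}$ because $C_1$ is the unit of $\mathcal{H}_v$; here $\boldsymbol{X}_1 = \boldsymbol{z}^{\lambda}$ is the explicit monomial value of the Whittaker function of $\tilde{\Phi}^{\tau}_{w_0} = \Phi^{\tau}_{w_0}$ recorded earlier (consistent with~(\ref{xasdc}) at $v = 0$, but needed for all $v$).

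To prove the displayed identity I would expand $(T_i + 1) C_{w'} = \sum_{u \leqslant w'} (T_i + 1) T_u$ by the quadratic relation, using $T_i T_u = T_{s_i u}$ when $s_i u > u$ and $T_i T_u = (v - 1) T_u + v T_{s_i u}$ when $s_i u < u$. Collecting the coefficient of each $T_x$ and simplifying with the lifting property of the Bruhat order --- namely $[x \leqslant s_i w'] = [s_i x \leqslant w']$ when $s_i x < x$, and $[x \leqslant s_i w'] = [x \leqslant w']$ when $s_i x > x$ --- the terms free of $v$ reassemble exactly into $C_{s_i w'} = \sum_{x \leqslant s_i w'} T_x$, and the surviving $v$-part is $v \sum_{x \in H (w', s_i)} T_x$, since the two leftover index sets $\{x \leqslant w' : s_i x < x\}$ and $\{x : s_i x > x,\ s_i x \leqslant w'\}$ are precisely the descent and ascent halves of $H (w', s_i)$. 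It then remains to match $\sum_{x \in H} T_x$ with $\sum_u c (u) C_u = \sum_x \bigl( \sum_{u \geqslant x} c (u) \bigr) T_x$, i.e.\ to verify $\sum_{u \geqslant x} c (u) = [x \in H (w', s_i)]$ for all $x$. This is exactly the geometric identity $S = \sum_u c (u) X_u$ of Theorem~\ref{thmain} read through the definition~(\ref{suggested}): writing $u (y)$ for the element with $y \in Y_{u (y)}$, one has $y \in X_u \iff u (y) \leqslant u$, and the $\mathbb{P}^1$-locus $S$ is the downward-closed set $\{y : u (y) \in H (w', s_i)\}$.

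The main obstacle is this last identification of the correction coefficients. Everything else is a mechanical Hecke-algebra computation together with standard Bruhat-order lifting; the real content is recognizing that the combinatorial profile of the order ideal $H (w', s_i)$ --- encoded by the $c (u)$ through inclusion--exclusion over its maximal elements --- is precisely the quantity manufactured by the quadratic relation, which in turn is the same statement as the geometric description of the fibre locus $S$ of the Bott--Samelson map in Theorem~\ref{thmain}. I would take care to use that $H (w', s_i)$ is genuinely a lower order ideal (as stated in the text), so that $S$ is its downward closure and the inclusion--exclusion bookkeeping matches the $c (u)$ on the nose.
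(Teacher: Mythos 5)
Your argument is correct, but it is not the route the paper takes, and the comparison is instructive. The paper proves the $\boldsymbol{Y}_w$ identity first and directly: the recursion for the standard basis $\Phi_w$ has \emph{no} correction terms (Proposition~\ref{earlymagic} gives $\mathcal{W}_\tau\Phi^\tau_{sw}=\mathfrak{T}'_\alpha\mathcal{W}_\tau\Phi^\tau_w$ for $sw<w$), so iterating from $\mathcal{W}_\tau\Phi^\tau_{w_0}(a_\lambda)=\delta^{1/2}(a_\lambda)\boldsymbol{z}^{w_0\lambda}$ yields Theorem~\ref{midmagic}, and conjugating by $\theta$ gives $\boldsymbol{Y}_w(\lambda)=T_w\cdot\boldsymbol{z}^\lambda$ at once; the formal proof in Section~\ref{HeckeAlgebras} packages this as $\boldsymbol{Y}_w(\lambda)=\xi_\lambda(T_{(ww_0)^{-1}}^{-1})=T_w\cdot\xi_\lambda(T_{w_0^{-1}}^{-1})=T_w\cdot\boldsymbol{z}^\lambda$, using the $\mathcal{H}_v$-equivariance of $\xi_\lambda$ and the factorization $T_{(ww_0)^{-1}}^{-1}=T_wT_{w_0^{-1}}^{-1}$. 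The $\boldsymbol{X}_w$ statement is then just the sum over $u\leqslant w$. You instead induct on the $\boldsymbol{X}_w$ recursion of Theorem~\ref{thmain}, which carries the correction terms, and so you must establish the Hecke-algebra identity $(T_i+1)\sum_{u\leqslant w'}T_u=\sum_{u\leqslant s_iw'}T_u+v\sum_{x\in H(w',s_i)}T_x$ and then match $\sum_{x\in H}T_x$ with $\sum_u c(u)\sum_{x\leqslant u}T_x$. All of these steps are sound: the quadratic relation splits $(T_i+1)C_{w'}$ along the $s_i$-orbits $\{u,s_iu\}$, the lifting property (Proposition~\ref{deodchar}) identifies the $v^0$-part with $C_{s_iw'}$ and the $v^1$-part with $H(w',s_i)$, the needed evaluation $\sum_{u\geqslant x}c(u)=1$ or $0$ according as $x\in H(w',s_i)$ or not is exactly the Verma--Deodhar M\"obius inversion already used in Proposition~\ref{fibersymbolic}, and your base case $\boldsymbol{X}_1(\lambda)=\boldsymbol{z}^\lambda$ is Proposition~\ref{wflongev}. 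What your route buys is an explicit explanation of why the correction coefficients of Theorem~\ref{thmain} are precisely the ones compatible with the Hecke action (your identity together with the theorem would re-derive the algebraic half of Theorem~\ref{thmain}); what it costs is that you redo, inside $\mathcal{H}_v$, the combinatorial bookkeeping that the paper sidesteps entirely by working in the $\Phi_w$ basis, where the recursion is clean.
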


\noindent
This shows that the action of the Hecke algebra by
Demazure-Lusztig operators gives a tidy formula for a basis of the Iwahori
fixed Whittaker functions.

An equivalent formulation more directly in terms of Whittaker functions is
given in Theorem~\ref{midmagic}. The relation between $\mathcal{W}_\tau$ and
$T_w$ appearing in the statement of Theorem~\ref{amazing} is more elaborate
and we supply a second proof based on machinery developed in
Section~\ref{HeckeAlgebras}. This machinery is also used to obtain the
extension of the action to the affine Hecke algebra, a point we discuss next.

The affine Iwahori Hecke algebra, denoted $\tilde{\mathcal{H}}_v$, is generated
by $\mathcal{H}_v$ together an abelian subgroup $\zeta^{\Lambda}$ isomorphic
with the weight lattice $\Lambda$ of $\hat{T}$. We will show
in Theorem~\ref{cherednik} that the above
representation of $\mathcal{H}_v$ may be extended to a representation of
$\tilde{\mathcal{H}}_v$ in which $\zeta^{\Lambda}$ acts by translation: if
$\lambda \in \Lambda$ then $\zeta^{\lambda} \in \zeta^{\Lambda}$ is an element
that acts on functions of $\mathbb{C}[v,v^{-1}]\otimes\mathcal{O}( \hat{T})$ by multiplication by
$\boldsymbol{z}^{- \lambda}$. The resulting representation of
$\tilde{\mathcal{H}}_v$ was previously considered by
Lusztig~\cite{LusztigK1}. Indeed
$\mathbb{C}[v,v^{-1}]\otimes\mathcal{O}( \hat{T})$ is isomorphic to
the complexified equivariant K-group 
$\mathbb{C}\otimes_{\mathbb{Z}}K_M(X)$ of the flag variety $X$, where $M=\hat{G}\times GL_1$,
and Lusztig constructions an action of the Hecke algebra (over $\mathbb{Z}$)
on $K_M(X)$. The
same representation also appeared in Cherednik~{\cite{Cherednik}} in the
context of double affine Hecke algebras. In our context, it appears naturally
in the theory of Whittaker functions. This representation is irreducible, and
may be described as follows. The element $\boldsymbol{z}^{- \rho}$ of
$\mathbb{C}[v,v^{-1}]\otimes\mathcal{O}( \hat{T})$, where $\rho$ is the Weyl vector
(half the sum of the positive roots) for $\hat{G}$, is annihilated by the
$\mathfrak{D}_i$, so $\mathcal{H}_v$ acts on the one-dimensional span of this
vector by the sign character that sends $T_i$ to $- 1$. The representation of
$\tilde{\mathcal{H}}_v$ on $\mathcal{O}( \hat{T})$ is the irreducible
representation induced by this one-dimensional representation of
$\mathcal{H}_v$.

Let $\widetilde{\mathcal{H}}$ be the specialization of $\widetilde{\mathcal{H}}_v$ to
a complex algebra obtained by putting $v=q^{-1}$.
It was shown by Iwahori and Matsumoto~{\cite{IwahoriMatsumoto}}
that $\tilde{\mathcal{H}}$ is the convolution ring of compactly supported
$J$-biinvariant functions on $G (F)$. It therefore acts on the
finite-dimensional vector space space $M (\tau)^J$ of Iwahori invariants by
convolution on the right. This action depends on $\tau$, and the isomorphism
class of $M (\tau)$ is determined by the isomorphism class of this
representation of $\tilde{\mathcal{H}}$. If $\boldsymbol{z} \in \hat{T}
(\mathbb{C})$ is in general position then $M (\tau_{\boldsymbol{z}})$ is
irreducible, and in this case, the isomorphism class of $M
(\tau_{\boldsymbol{z}})$ and $M (\tau_{\boldsymbol{z}'})$ are the same if and only
if $\boldsymbol{z}, \boldsymbol{z}' \in \hat{T} (\mathbb{C})$ are in the same
$W$-orbit.

On the other hand, we have seen that there is a representation of
$\tilde{\mathcal{H}}$ on $\mathbb{C}[q,q^{-1}]\otimes\mathcal{O}(\hat{T})$ 
which is constructed in this paper using the theory of Whittaker
functions, and which was constructed earlier by Lusztig in
connection with equivariant K-theory. This representation was
applied by Lusztig in~\cite{LusztigK1} and in Kazhdan
and Lusztig~\cite{KL2,KL3} to construct all representations having
an Iwahori fixed vector. To see that one may construct at least
the principal series representations, fix $\boldsymbol{z}_0\in\hat{T}(\mathbb{C})$.
Consider the ideal $\mathcal{J}_{\boldsymbol{z}_0}$ of functions in
$\mathcal{O}(\hat{T})$ that vanish on the $W$-orbit of $\boldsymbol{z}_0$.
It is clear from the definitions that this ideal is closed under the
operators $\mathfrak{D}_i$, as well as $\zeta^\Lambda$,
and so there is an induced action of $\tilde{\mathcal{H}}$ on
$\mathcal{O}(\hat{T})/\mathcal{J}_{\boldsymbol{z}_0}$. By
Lemma~7 of Bernstein and Rumelhart~\cite{Bernstein} every irreducible
constitutent of this representation is the space of Iwahori fixed vectors in
an irreducible representation of $G(F)$.

We thank Gautam Chinta, Paul Gunnells and Anne Schilling for their interest in
the project and helpful conversations during the writing of this
paper. This work was supported by NSF grants DMS-0652817, DMS-0844185, and
DMS-1001079.

\section{Preliminaries}\label{crresults}

Let $G$ be a split semisimple Chevalley group. By this we mean an affine
algebraic group scheme over $\mathbb{Z}$, whose Lie algebra
$\mathfrak{g}_{\mathbb{Z}}$ has a fixed Chevalley basis defined over
$\mathbb{Z}$ corresponding to a root system $\Delta$. The elements of the
Chevalley basis are the nilpotent elements $X_{\alpha}$ where $\alpha$ runs
through all roots and the coroots $H_{\alpha} = [X_{\alpha}, X_{- \alpha}]$
where $\alpha$ runs through the simple positive roots. The structure constants
of the Chevalley basis are all integers (cf.~{\cite{Chevalley}}). If $\alpha$
is a root, let $x_{\alpha} : \mathbb{G}_{\text{a}} \longrightarrow G$ be the
one parameter subgroup tangent to $X_{\alpha}$. Let $T$ be the split maximal
torus whose Lie algebra is spanned by the $H_{\alpha}$, and let $N$
(resp.~$N_-$) be the unipotent subgroup whose Lie algebra is spanned by the
$X_{\alpha}$ (resp.~$X_{- \alpha}$) as $\alpha$ runs through the positive
roots. Then $B = TN$ is the standard Borel subgroup of $G$.

Let $F$ be a nonarchimedean local field and $\mathfrak{o}$ its ring of
integers, $\mathfrak{p}$ the maximal ideal of $\mathfrak{o}$, and let $q = |
\mathfrak{o} / \mathfrak{p} |$. We will denote the residue field
$\mathbb{F}_q$. The group $G (F)$ has $K = G ( \mathfrak{o})$ as a maximal
compact subgroup. The group $X_{\ast} (T)$ of rational cocharacters is
isomorphic to $T (F) / T ( \mathfrak{o})$, where the one-parameter subgroup
$\varphi \in X_{\ast} (T)$ corresponds to the coset $\varphi (\varpi) T (
\mathfrak{o})$ with $\varpi$ a prime element.

If $w$ is an element of the Weyl group $W$, we will choose a fixed
representative of $w$ in $G ( \mathfrak{o})$, and by abuse of notation we will
denote this element also as $w$. Nothing will depend on this choice
in any essential way. We will denote by $w_0$ the long Weyl group element.

A character $\tau$ of $T (F)$ is called
{\tmem{unramified}} if it is trivial on $T ( \mathfrak{o})$. We will let $W$
act on the right on unramified characters $\tau$, so that
\[ (\tau w) (t) = \tau (wtw^{- 1}), \hspace{2em} w \in W, \hspace{2em} t \in T
   (F) . \]
Since $\tau$ is unramified, this does not depend on the choice of
representative $w$ of the Weyl group element.

Let $\hat{G}$ be the connected Langlands dual group. It is an algebraic group
over $\mathbb{C}$ whose root data are dual to $G$. Thus if $\Delta$ is the
root system of $G$ with respect to $T$, we will denote by $\alpha
\longrightarrow \alpha^{\vee}$ the bijection of $\Delta$ with system
$\Delta^{\vee}$ of coroots, and $\Delta^{\vee}$ may be regarded as the root
system of $\hat{G}$. If $\hat{T}$ is a maximal torus of $\hat{G}$ then the
group $X_{\ast} (T)$ of rational one-parameter subgroups of $T$ is identified
with the group $X^{\ast} ( \hat{T})$ of rational characters. Thus we have a
homomorphism
\begin{equation}
  \label{canmap} T (F) \longrightarrow T (F) / T ( \mathfrak{o}) \cong
  X_{\ast} (T) \cong X^{\ast} ( \hat{T}) .
\end{equation}
If $\boldsymbol{z} \in \hat{T} ( \mathbb{C})$ and $\lambda \in X^{\ast} (
\hat{T})$ we will denote by $\boldsymbol{z}^{\lambda}$ the application of the
character $\lambda$ to $\boldsymbol{z}$. Also if $t \in T (F)$ we may apply the
homomorphism (\ref{canmap}) to $t$ and apply the resulting rational character
of $\hat{T}$ to $\boldsymbol{z}$; we will denote the result by $\tau_{\boldsymbol{z}}
(t)$. Thus $\tau_{\boldsymbol{z}}$ is an unramified character of $T$ and
$\boldsymbol{z} \longmapsto \tau_{\boldsymbol{z}}$ is an isomorphism of $\hat{T} (
\mathbb{C})$ with the group of unramified characters of $T (F)$.

If $\lambda \in X^{\ast} ( \hat{T})$, we will denote by $a_{\lambda}$ a
representative of the coset in $T (F) / T ( \mathfrak{o})$ corresponding to
$\lambda$ by the isomorphism in (\ref{canmap}).

Let $\tau = \tau_{\boldsymbol{z}}$ be such an unramified character. Let $M (\tau)$
be the space of the representation of $G (F)$ induced from $\tau$. This is the
space of locally constant functions $f : G (F) \longrightarrow \mathbb{C}$
that satisfy
\[ f (bg) = (\delta^{1 / 2} \tau) (b) f (g), \hspace{2em} b \in B (F), \]
where $\delta : B (F) \longrightarrow \mathbb{R}^{\times}$ is the modular
character. The standard intertwining integral $\mathcal{A}_w : M (\tau)
\longrightarrow M (\tau w)$ is
\[ \mathcal{A}_w^{\tau} \Phi (g) = \int_{N (F) \cap w^{- 1} N_- (F) w} \Phi
   (wng) \hspace{0.25em} dn. \]
The integral is convergent for $\tau = \tau_{\boldsymbol{z}}$ with $|
\alpha^{\vee} ( \boldsymbol{z}) | < 1$ when $\alpha \in \Delta^+$. It makes sense
for other $\boldsymbol{z}$ by meromorphic continuation in a suitable sense.

In order to convert between this notation (which is the same as Reeder
{\cite{ReederVector}}) and that of Casselman~{\cite{Casselman}}, bear in mind
that $\mathcal{A}^{\tau}_w$ is Casselman's $T_{w^{- 1}}$. Due to this
difference, the Weyl group action on characters is a right action: $w : \tau
\longrightarrow \tau w$.

Let $J$ be the Iwahori subgroup of $G (F)$. It is the inverse image of $B (
\mathbb{F}_q)$ under the natural map $K \longrightarrow G ( \mathbb{F}_q)$.
The dimension of the space $M (\tau)^J$ of $J$-fixed vectors is $|W|$. Let
$\{\Phi_w^\tau\}$ and $\{\widetilde{\Phi}_w^\tau\}$ be the bases of $M (\tau)^J$
defined by (\ref{phidef}) and (\ref{phitildef}). The
basis element $\Phi_w^\tau$ is denoted $\phi_w$ by Casselman~{\cite{Casselman}}.

\begin{lemma}
  \label{wellknow}If $n \in N (F)$ and $w_0 n \in Bw_0 J$ then $n \in N (
  \mathfrak{o})$.
\end{lemma}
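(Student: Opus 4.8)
The plan is to peel off the Iwahori and opposite-Borel pieces until the hypothesis forces $n$ into $N(\mathfrak{o})$. The engine of the argument is the elementary transversality $N(F)\cap B_-(F)=\{1\}$, where $B_-=TN_-$ is the opposite Borel, combined with the Iwahori factorization of $J$ in which the positive-unipotent factor is \emph{integral}.

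First I would rewrite the hypothesis by conjugating through $w_0$. Conjugation by the fixed representative of $w_0$ fixes $T$ and, since the longest element carries every positive root to a negative root, carries $N$ to $N_-$; hence $w_0^{-1} B w_0 = B_-$. This does not depend on the chosen representative, since two representatives differ by an element of $T(\mathfrak{o})\subseteq B\cap B_-$, which normalizes $B_-$. Thus if $w_0 n = b\, w_0\, j$ with $b\in B(F)$ and $j\in J$, then $n = (w_0^{-1} b\, w_0)\, j \in B_-(F)\, J$. The task therefore reduces to showing $N(F)\cap B_-(F)\,J \subseteq N(\mathfrak{o})$.

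Next I would invoke the Iwahori factorization $J = (N_-(F)\cap J)\, T(\mathfrak{o})\, N(\mathfrak{o})$, writing $j = u_- t_0 u_+$ with $u_-\in N_-(F)$, $t_0\in T(\mathfrak{o})$, and $u_+\in N(\mathfrak{o})$; the decisive feature is that $u_+$ is integral (this reflects the fact that $J$ is the preimage of $B(\mathbb{F}_q)$, which contains all of $N(\mathbb{F}_q)$, so $J\cap N(F)=N(\mathfrak{o})$). Writing the $B_-(F)$-factor as $t'u'$ with $t'\in T(F)$ and $u'\in N_-(F)$, and collecting the torus and $N_-$ contributions by conjugating $t_0$ past the assembled $N_-$ part (which preserves $N_-$), one finds $n\, u_+^{-1} = t''v$ with $t''\in T(F)$ and $v\in N_-(F)$. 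Hence $n\, u_+^{-1}\in N(F)\cap B_-(F)$. Since $N(F)\cap B_-(F)\subseteq B(F)\cap B_-(F) = T(F)$ and $N(F)\cap T(F)=\{1\}$, this intersection is trivial, forcing $n = u_+\in N(\mathfrak{o})$.

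The individual computations are routine, so I do not expect a serious obstacle; the only points requiring care are recalling the Iwahori factorization in the precise form that makes the positive-unipotent factor $u_+$ lie in $N(\mathfrak{o})$, and the bookkeeping in the last step, where one must confirm that after conjugating $t_0$ past the $N_-$ factor the product genuinely lands in $T(F)N_-(F)$ so that the clean intersection $N\cap B_-=\{1\}$ applies. The conjugation by $w_0$ in the first step is what enables everything, by converting the positive Borel of the hypothesis into the opposite Borel that is transverse to $N$.
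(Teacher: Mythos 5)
Your proof is correct and is essentially the paper's own argument: both rest on the Iwahori factorization $J=N_-(\mathfrak{p})T(\mathfrak{o})N(\mathfrak{o})$ with integral positive-unipotent factor, conjugation by $w_0$ carrying $B$ to $B_-$, and the triviality of $N(F)\cap B_-(F)$. The only difference is cosmetic (you conjugate first and then factor $J$, while the paper factors first and then conjugates), so no further comment is needed.
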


\begin{proof}
  Using the Iwahori factorization $J = N_- ( \mathfrak{p}) T ( \mathfrak{o}) N
  ( \mathfrak{o})$ we may write $w_0 n = bw_0 \gamma n_1$ with $b \in B (F)$,
  $\gamma \in N_- ( \mathfrak{p}) T ( \mathfrak{o})$ and $n_1 \in N (
  \mathfrak{o})$. Since $w_0 \gamma w_0^{- 1} \in B (F)$, this implies that
  $nn_1^{- 1} \in N (F) \cap w_0^{- 1} B (F) w_0$, so $n = n_1 \in N (
  \mathfrak{o})$.
\end{proof}

\begin{lemma}
  \label{notdomzer}Let $\lambda \in X^{\ast} ( \hat{T})$. Then $\mathcal{W}_{\tau}
  \Phi^{\tau}_w (a_{\lambda}) = 0$ if $\lambda$ is not dominant.
\end{lemma}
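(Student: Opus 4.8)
The plan is to play off the two transformation properties that the Iwahori-fixed Whittaker function $W := \mathcal{W}_{\tau} \Phi^{\tau}_w$ automatically satisfies. On the one hand, since $\mathcal{W}_{\tau} \Phi^{\tau}_w \in \tmop{Ind}_N^G (\psi)$, we have $W (ng) = \psi (n) W (g)$ for all $n \in N (F)$. On the other hand, since $\Phi^{\tau}_w$ is $J$-fixed and $\mathcal{W}_{\tau}$ intertwines right translation, $W$ is right $J$-invariant: $W (gk) = W (g)$ for $k \in J$. The key point is that $N (\mathfrak{o})$ lies in both $N (F)$ and $J$, so these two invariances can be compared at the single point $g = a_{\lambda}$.

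Concretely, I would take an arbitrary $u \in N (\mathfrak{o}) \subseteq J$ and write $a_{\lambda} u = (a_{\lambda} u a_{\lambda}^{- 1}) a_{\lambda}$, where $n := a_{\lambda} u a_{\lambda}^{- 1} \in N (F)$ because $T$ normalizes $N$. Right $J$-invariance gives $W (a_{\lambda} u) = W (a_{\lambda})$, while $N$-equivariance gives $W (a_{\lambda} u) = W (n a_{\lambda}) = \psi (a_{\lambda} u a_{\lambda}^{- 1}) W (a_{\lambda})$. Comparing the two expressions yields $(1 - \psi (a_{\lambda} u a_{\lambda}^{- 1}))\, W (a_{\lambda}) = 0$ for every $u \in N (\mathfrak{o})$. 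Hence it suffices to exhibit a single $u \in N (\mathfrak{o})$ for which $\psi (a_{\lambda} u a_{\lambda}^{- 1}) \neq 1$.

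To produce such a $u$, I would use the Chevalley relation $a_{\lambda} x_{\alpha} (c) a_{\lambda}^{- 1} = x_{\alpha} (\alpha (a_{\lambda}) c)$ together with $\alpha (a_{\lambda}) = \varpi^{\langle \alpha, \lambda \rangle}$ (recall $a_{\lambda} = \lambda (\varpi)$, with $\lambda \in X^{\ast} (\hat{T}) = X_{\ast} (T)$ viewed as a cocharacter of $T$; the valuation $\langle \alpha, \lambda \rangle$ is independent of the choice of representative $a_{\lambda}$). If $\lambda$ is not dominant there is a simple root $\alpha_i$ with $\langle \alpha_i, \lambda \rangle = - m < 0$, so conjugation by $a_{\lambda}$ carries $x_{\alpha_i} (\mathfrak{o})$ onto $x_{\alpha_i} (\varpi^{- m} \mathfrak{o})$, strictly larger than $x_{\alpha_i} (\mathfrak{o})$. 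Because $\psi$ is \emph{generic} --- nontrivial on each simple root subgroup $N_{\alpha_i}$, of conductor $\mathfrak{o}$ --- there is $c \in \mathfrak{o}$ (indeed a unit) with $\psi (x_{\alpha_i} (\varpi^{- m} c)) \neq 1$, and then $u = x_{\alpha_i} (c) \in N (\mathfrak{o})$ forces $W (a_{\lambda}) = 0$.

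The step requiring the most care is the last one: making precise what the hypothesis that $\psi$ lies in an open $T$-orbit gives us, namely that the restriction of $\psi$ to each simple root subgroup is a nontrivial additive character of conductor $\mathfrak{o}$. Once that normalization is fixed, nontriviality of $\psi$ on $x_{\alpha_i} (\varpi^{- m} \mathfrak{o})$ for $m \geq 1$ is automatic. I would also verify the bookkeeping that non-dominance of $\lambda \in X^{\ast} (\hat{T})$ is exactly the existence of a simple root $\alpha_i$ of $G$ with $\langle \alpha_i, \lambda \rangle < 0$ (here the coroots of $\hat{G}$ are the roots of $G$, so dominance reads $\langle \alpha, \lambda \rangle \geq 0$ for all positive roots $\alpha$), and double-check the direction of the conjugation in the Chevalley relation.
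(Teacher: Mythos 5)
Your argument is correct and is exactly the standard Casselman--Shalika argument that the paper invokes: its ``proof'' of Lemma~\ref{notdomzer} is just a citation of Casselman--Shalika, Lemma~5.1, with the remark that the argument applies to any Iwahori-fixed vector because $J$ contains $N(\mathfrak{o})$ --- which is precisely the containment your proof exploits. The only quibble is the parenthetical claim that the element $c$ witnessing nontriviality can be taken to be a unit, which is unnecessary and not quite automatic when $\langle\alpha_i,\lambda\rangle<-1$; the existence of some $c\in\mathfrak{o}$ is all you need and is what you actually establish.
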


\begin{proof}
  See Casselman and Shalika~{\cite{CasselmanShalika}} Lemma~5.1. (The proof
  there obviously applies to all Iwahori-fixed Whittaker functions since $J$
  contains $N ( \mathfrak{o})$.)
\end{proof}

\begin{proposition}
  \label{wflongev}
  Given an unramified character $\tau$ of $T(F)$ we have $\Phi^{\tau}_{w_0} =
  \tilde{\Phi}^{\tau}_{w_0}$ and
  \[ \mathcal{W}_{\tau} \Phi_{w_0}^{\tau} (a_{\lambda}) = \left\{ \begin{array}{ll}
       \delta^{1 / 2} (a_{\lambda}) \boldsymbol{z}^{w_0 \lambda} & \text{if
         $\lambda$ is dominant,}\\ 0 & \text{otherwise} .
     \end{array} \right. \]
\end{proposition}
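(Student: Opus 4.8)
The plan is to treat the two assertions in turn. The equality $\Phi^{\tau}_{w_0} = \tilde{\Phi}^{\tau}_{w_0}$ is immediate from the definition~(\ref{phitildef}): since $w_0$ is the unique maximal element of the Bruhat order, the only $u \geqslant w_0$ is $u = w_0$, so the defining sum collapses to a single term. For the Whittaker value I would first invoke Lemma~\ref{notdomzer} to dispose of the non-dominant case, leaving only the computation of $\mathcal{W}_{\tau}\Phi^{\tau}_{w_0}(a_{\lambda})$ for $\lambda$ dominant.

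For the dominant case I would realize $\mathcal{W}_{\tau}$ by the Jacquet integral, so that
\[ \mathcal{W}_{\tau}\Phi^{\tau}_{w_0}(a_{\lambda}) = \int_{N(F)} \Phi^{\tau}_{w_0}(w_0 n a_{\lambda})\, \psi^{-1}(n)\, dn, \]
with Haar measure normalized by $\mathrm{vol}(N(\mathfrak{o})) = 1$. The key manipulation is the change of variables $n = a_{\lambda} m a_{\lambda}^{-1}$, which turns $w_0 n a_{\lambda}$ into $w_0 a_{\lambda} m = (w_0 a_{\lambda} w_0^{-1}) w_0 m$ and contributes a Jacobian $\delta(a_{\lambda})$. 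Writing $b_0 = w_0 a_{\lambda} w_0^{-1} \in T(F)$ and using the left Borel transformation law of $\Phi^{\tau}_{w_0}$, the integrand becomes $\delta^{1/2}\tau(b_0)\, \Phi^{\tau}_{w_0}(w_0 m)\, \psi^{-1}(a_{\lambda} m a_{\lambda}^{-1})$. Now $\Phi^{\tau}_{w_0}(w_0 m)$ is supported where $w_0 m \in B w_0 J$, and Lemma~\ref{wellknow} identifies this support precisely as $m \in N(\mathfrak{o})$, where the value is $\Phi^{\tau}_{w_0}(w_0 m) = 1$. Thus (with no convergence issue, the integral having localized to a compact set) it collapses to $\delta(a_{\lambda})\,\delta^{1/2}\tau(b_0)\int_{N(\mathfrak{o})} \psi^{-1}(a_{\lambda} m a_{\lambda}^{-1})\, dm$.

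It remains to evaluate the $N(\mathfrak{o})$-integral and to collect constants. Because $\lambda$ is dominant, conjugation by $a_{\lambda}$ scales the $\alpha$-coordinate of $N(\mathfrak{o})$ by $\varpi^{\langle \alpha, \lambda\rangle}$ with $\langle\alpha,\lambda\rangle \geqslant 0$, so $a_{\lambda} m a_{\lambda}^{-1} \in N(\mathfrak{o})$; since the generic character $\psi$ has conductor making it trivial on $N(\mathfrak{o})$, the integrand is identically $1$ and the integral equals $\mathrm{vol}(N(\mathfrak{o})) = 1$. Finally I would collect the constants using $\tau_{\boldsymbol{z}}(b_0) = \boldsymbol{z}^{w_0 \lambda}$ (since $b_0$ represents the coset $w_0\lambda$ under~(\ref{canmap})) together with the identity $\delta(w_0 t w_0^{-1}) = \delta^{-1}(t)$, which holds because $w_0$ negates the set of positive roots. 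This gives $\delta(a_{\lambda})\,\delta^{-1/2}(a_{\lambda})\,\boldsymbol{z}^{w_0\lambda} = \delta^{1/2}(a_{\lambda})\boldsymbol{z}^{w_0\lambda}$, as claimed. I expect the main obstacle to be bookkeeping rather than conceptual: getting the Jacobian and the two $\delta^{1/2}$ factors to combine correctly, and confirming that the chosen normalizations of the Jacquet integral and the Haar measure make the leading constant exactly $1$. The substantive input---the exact determination of the support---is already supplied by Lemma~\ref{wellknow}.
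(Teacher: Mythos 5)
Your proposal is correct and follows essentially the same route as the paper's proof: dispose of the non-dominant case via Lemma~\ref{notdomzer}, then in the Jacquet integral make the substitution $n \mapsto a_\lambda n a_\lambda^{-1}$ with Jacobian $\delta(a_\lambda)$, identify the support as $N(\mathfrak{o})$ via Lemma~\ref{wellknow}, use dominance to make $\psi$ trivial there, and combine the $\delta$ factors using $\delta(w_0 a w_0^{-1})=\delta^{-1}(a)$. The only differences are cosmetic (e.g.\ writing $\psi^{-1}$ versus $\psi$ in the Whittaker integral and spelling out the measure normalization).
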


\begin{proof}
  The fact that $\tilde{\Phi}_{w_0} = \Phi_{w_0}$ is clear since $w_0$ is
  maximal in $W$. For the last assertion by Lemma~\ref{notdomzer} we may
  assume that $\lambda$ is dominant. Denoting $a = a_{\lambda}$,
  \[ \mathcal{W}_{\tau} \Phi_{w_0}^{\tau} (a) = \int_{N (F)} \Phi_{w_0}^{\tau}
     (w_0 na) \psi (n) \hspace{0.25em} dn. \]
  We make the variable change $n \longrightarrow ana^{- 1}$ whose Jacobian is
  $\delta (a)$, and the integral becomes
  \begin{eqnarray*}
    \delta (a) \int_{N (F)} \Phi_{w_0}^{\tau} (w_0 an) \psi (an a^{- 1})
    \hspace{0.25em} dn & = & \\
    \delta (a) \cdot \delta^{1 / 2} \tau (w_0 aw_0^{- 1}) \int_{N (F)}
    \Phi_{w_0}^{\tau} (w_0 n) \psi (ana^{- 1}) \hspace{0.25em} dn. &  & 
  \end{eqnarray*}
  Since $\delta^{1 / 2} (w_0 aw_0^{- 1}) = \delta^{- 1 / 2} (a)$,
  \[ \delta (a) \cdot \delta^{1 / 2} \tau (w_0 aw_0^{- 1}) = \delta^{1 / 2}
     (a) (\tau w_0) (a) = \delta^{1 / 2} (a) \boldsymbol{z}^{w_0 \lambda} . \]
  By Lemma~\ref{wellknow}, $\Phi_{w_0}^{\tau} (w_0 n) = 1$ if $n \in N
  ( \mathfrak{o})$ and $0$ otherwise. Since $\lambda$ is dominant, $\psi
  (ana^{- 1}) = 1$ when $n \in N ( \mathfrak{o})$, and the statement follows.
\end{proof}

Let $\tau = \tau_{\boldsymbol{z}}$ and define
\begin{equation}
  \label{caltau} C_{\alpha} (\tau) = \frac{1 - q^{- 1} \boldsymbol{z}^{\alpha}}{1
  - \boldsymbol{z}^{\alpha}} .
\end{equation}

\begin{proposition}
  We have
  \begin{equation}
    \label{wwithacomp} \mathcal{W}_{\tau w} \mathcal{A}_w^{\tau} =
    \prod_{\tmscript{\begin{array}{c}
      \alpha \in \Delta^+\\
      w^{- 1} \alpha \in \Delta^-
    \end{array}}} \frac{1 - q^{- 1} \boldsymbol{z}^{- \alpha}}{1 -
    \boldsymbol{z}^{\alpha}} \mathcal{W}_{\tau} .
  \end{equation}
\end{proposition}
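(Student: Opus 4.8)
The plan is to exploit the uniqueness of the Whittaker model to reduce the identity to a scalar, and then to pin down that scalar by multiplicativity together with a rank-one calculation. First I would observe that both sides of (\ref{wwithacomp}) are $G(F)$-intertwining maps from $M(\tau)$ to $\operatorname{Ind}_N^G(\psi)$: the operator $\mathcal{A}_w^\tau$ intertwines $M(\tau)$ with $M(\tau w)$, and $\mathcal{W}_{\tau w}$ intertwines $M(\tau w)$ with $\operatorname{Ind}_N^G(\psi)$, so the composite $\mathcal{W}_{\tau w}\mathcal{A}_w^\tau$ lies in the same Hom-space as $\mathcal{W}_\tau$. By the uniqueness of Whittaker functionals for principal series --- the Bruhat-cell filtration of $M(\tau)$ combined with the non-degeneracy of $\psi$ shows that $\dim \operatorname{Hom}_N(M(\tau),\psi)=1$, only the open cell contributing --- this Hom-space is one-dimensional. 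Hence there is a scalar $c(w,\tau)$, a priori a rational function of $\boldsymbol{z}$, with $\mathcal{W}_{\tau w}\mathcal{A}_w^\tau = c(w,\tau)\,\mathcal{W}_\tau$, and everything reduces to identifying $c(w,\tau)$ with the stated product. Throughout, the identity would first be proved where the intertwining integral converges ($|\alpha^\vee(\boldsymbol{z})|<1$ for $\alpha\in\Delta^+$) and then extended by meromorphic continuation, both sides being rational in $\boldsymbol{z}$.

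Next I would show that $c$ is multiplicative in the length-additive direction. From the cocycle relation $\mathcal{A}_{w_1 w_2}^\tau = \mathcal{A}_{w_2}^{\tau w_1}\mathcal{A}_{w_1}^\tau$, valid when $\ell(w_1 w_2)=\ell(w_1)+\ell(w_2)$, composing with the Whittaker map gives $c(w_1 w_2,\tau)=c(w_2,\tau w_1)\,c(w_1,\tau)$. I would then check that the claimed product obeys the same recursion: the inversion set $\{\alpha\in\Delta^+ : w^{-1}\alpha\in\Delta^-\}$ splits compatibly under a length-additive factorization, and, tracking the substitution $\tau\mapsto\tau w_1$ (which replaces $\boldsymbol{z}$ by $w_1^{-1}\boldsymbol{z}$), the product over the inversion set of $w_1 w_2$ factors as the product for $w_1$ times the product for $w_2$ evaluated at $\tau w_1$. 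Consequently it suffices to prove (\ref{wwithacomp}) when $w=s=s_\alpha$ is a simple reflection, where $\{\beta\in\Delta^+ : s\beta\in\Delta^-\}=\{\alpha\}$ and the assertion becomes $\mathcal{W}_{\tau s}\mathcal{A}_s^\tau = \frac{1-q^{-1}\boldsymbol{z}^{-\alpha}}{1-\boldsymbol{z}^\alpha}\,\mathcal{W}_\tau$.

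For the simple-reflection case I would reduce to the rank-one subgroup $G_\alpha$ generated by the root subgroups $U_{\pm\alpha}$. Here $\mathcal{A}_s^\tau$ integrates only over the one-parameter group $U_\alpha(F)=N(F)\cap s^{-1}N_-(F)s$, and inserting the Jacquet-integral formula $\mathcal{W}_{\tau s}\Phi(g)=\int_{N(F)}\Phi(w_0 n g)\psi(n)\,dn$ produces a double integral over $U_\alpha(F)\times N(F)$. After a change of variables absorbing the $U_\alpha$-integration into the $N$-integration defining $\mathcal{W}_\tau$, the computation localizes to $G_\alpha$ and collapses to a single local integral over $F$ of Gindikin--Karpelevich type, whose value is $\frac{1-q^{-1}\boldsymbol{z}^{-\alpha}}{1-\boldsymbol{z}^\alpha}$. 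The twist of the numerator to $\boldsymbol{z}^{-\alpha}$ --- rather than the $\boldsymbol{z}^\alpha$ occurring in $C_\alpha(\tau)$ of (\ref{caltau}) --- is exactly the effect of the additive character $\psi$ on the $U_\alpha$-variable; this is the Casselman--Shalika local computation.

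The main obstacle is this rank-one integral: one must carry out the change of variables combining the $U_\alpha$ and $N$ integrations correctly, keep careful track of the modular factor $\delta$ and of the restriction of $\psi$ to $U_\alpha(F)$, and evaluate the resulting $p$-adic integral. The remaining steps --- uniqueness and the multiplicativity bookkeeping --- are formal once uniqueness of the Whittaker model is granted, although the substitution $\boldsymbol{z}\mapsto w_1^{-1}\boldsymbol{z}$ must be handled with care so that the two products match.
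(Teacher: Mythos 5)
Your outline is correct and is essentially the argument of Casselman--Shalika (uniqueness of the Whittaker functional, the cocycle relation $\mathcal{A}_{w_1w_2}^{\tau}=\mathcal{A}_{w_2}^{\tau w_1}\mathcal{A}_{w_1}^{\tau}$ to reduce to a simple reflection, and the rank-one integral giving $\frac{1-q^{-1}\boldsymbol{z}^{-\alpha}}{1-\boldsymbol{z}^{\alpha}}$); the paper itself offers no independent proof and simply cites Casselman--Shalika, Proposition~4.3, of which your sketch is a faithful reconstruction. The only step you leave unverified is the explicit rank-one computation, which you correctly identify as the sole nonformal ingredient.
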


\begin{proof}
  This is Casselman and Shalika~{\cite{CasselmanShalika}}, Proposition~4.3.
\end{proof}

In (\ref{demdef}) we defined the Demazure operator $\partial_i =
\partial_{\alpha_i}$ corresponding to a simple reflection $\alpha_i$. We will
also make use of alternative version defined by
\[ \partial_{\alpha_i}' f ( \boldsymbol{z}) = \partial_i' f ( \boldsymbol{z}) =
   \frac{f ( \boldsymbol{z}) - \boldsymbol{z}^{\alpha_i} f (s_i \boldsymbol{z})}{1 -
   \boldsymbol{z}^{\alpha_i}} = \frac{f (s_i \boldsymbol{z}) - \boldsymbol{z}^{- \alpha_i}
   f ( \boldsymbol{z})}{1 - \boldsymbol{z}^{-\alpha_i}} . \]
Again we may define $\partial'_w = \partial_{i_1}' \cdots \partial'_{i_k}$ if
$w = s_{i_1} \cdots s_{i_k}$ is a reduced decomposition. If $w = w_0$ and
$\mu$ is dominant, then $\partial'_{w_0} ( \boldsymbol{z}^{w_0 \mu})$ is the
character of the irreducible representation of highest weight $\mu$.

\section{Whittaker functions}

In this section we will prove that the Iwahori Whittaker functions $W
(\Phi_w)$ are polynomials in $q^{- 1}$ whose constant terms are Demazure
characters.

\begin{proposition}
  \label{casselslick}Let $s = s_{\alpha}$ with $\alpha$ a simple root. Then
  \begin{equation}
    \label{secondid} \mathcal{A}_s \Phi^{\tau s}_w + C_{\alpha} (\tau)
    \Phi^{\tau}_{w_{}} = \left\{ \begin{array}{ll}
      \Phi^{\tau}_w + \Phi^{\tau}_{sw} & \text{if $sw < w$,}\\
      q^{- 1} (\Phi^{\tau}_w + \Phi^{\tau}_{sw}) & \text{if $sw > w$.}
    \end{array} \right.
  \end{equation}
\end{proposition}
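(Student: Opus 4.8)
The plan is to verify the identity as an equation in $M(\tau)^J$. Since $\{\Phi^\tau_{w'}\}_{w'\in W}$ is a basis of $M(\tau)^J$ and, by (\ref{phidef}), $\Phi^\tau_{w'}$ takes the value $1$ on the Weyl representative $w''$ exactly when $w''=w'$ and $0$ otherwise, two elements of $M(\tau)^J$ coincide as soon as they take equal values on all the representatives. So I would reduce everything to computing the coefficients in $\mathcal{A}_s\Phi^{\tau s}_w=\sum_{w'}\bigl(\mathcal{A}_s\Phi^{\tau s}_w\bigr)(w')\,\Phi^\tau_{w'}$, i.e. to the numbers $\bigl(\mathcal{A}_s\Phi^{\tau s}_w\bigr)(w')=\int \Phi^{\tau s}_w(s\,n\,w')\,dn$, and then comparing with the proposed right-hand side.

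First I would simplify the intertwining integral. Because $\alpha$ is simple, $s$ sends $\alpha$ to $-\alpha$ and permutes the remaining positive roots, so $N(F)\cap s^{-1}N_-(F)s$ is exactly the rank-one root group $x_\alpha(F)$. Normalizing Haar measure so that $x_\alpha(\mathfrak{o})$ has volume $1$, the operator becomes the one-dimensional integral $\mathcal{A}_s\Phi(g)=\int_F \Phi(s\,x_\alpha(u)\,g)\,du$, and the whole computation takes place inside the rank-one subgroup generated by $x_{\pm\alpha}$. There I would use the two Bruhat positions of $s\,x_\alpha(u)$: for $u\in\mathfrak{o}$ we have $x_\alpha(u)\in N(\mathfrak{o})\subseteq J$, so $s\,x_\alpha(u)\in sJ\subseteq BsJ$ lies in the $s$-cell; while for $u\notin\mathfrak{o}$ the rank-one identity $s\,x_\alpha(u)=x_\alpha(-u^{-1})\,\alpha^\vee(u^{-1})\,x_{-\alpha}(u^{-1})$ (for a suitable representative $s$), with $x_{-\alpha}(u^{-1})\in N_-(\mathfrak{p})\subseteq J$, places $s\,x_\alpha(u)$ in the identity cell $BJ$ with torus part $\alpha^\vee(u^{-1})$.

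The core step is then to evaluate $\int_F \Phi^{\tau s}_w(s\,x_\alpha(u)\,w')\,du$ using this dichotomy. Right multiplication by the representative $w'$ converts the rank-one cell datum into the genuine Bruhat cell of $s\,x_\alpha(u)\,w'$ in $G=\coprod Bw'J$, and whether this cell is $Bw'J$ or $B(sw')J$ is controlled by whether $sw'>w'$ or $sw'<w'$ (equivalently by the sign of $w'^{-1}\alpha$); this is why the answer is supported on $w'\in\{w,sw\}$ and why it splits into the two cases of the statement. On the locus $u\in\mathfrak{o}$ the integrand is constant where it is supported and contributes a volume factor ($1$ or $q^{-1}=\mathrm{vol}(\mathfrak{p})$); on the locus $u\notin\mathfrak{o}$ the factor $\delta^{1/2}(\tau s)(\alpha^\vee(u^{-1}))$ is constant on each valuation shell $v(u)=-n$, and summing the geometric series over $n\geqslant 1$ (each shell having measure $q^{n-1}(q-1)$) produces a rational function in $\boldsymbol{z}^{\alpha}$ and $q^{-1}$. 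According to whether $sw'>w'$ or $sw'<w'$ these two kinds of contribution attach to the coefficients of $\Phi^\tau_w$ and $\Phi^\tau_{sw}$, and the explicit term $C_\alpha(\tau)\Phi^\tau_w$ added on the left is designed precisely to collapse the geometric tail to the clean constants $1$ or $q^{-1}$.

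The main obstacle I anticipate is twofold. First, getting the measure normalization and the exponents of $q$ and of $\boldsymbol{z}^{\alpha}$ exactly right, so that the geometric series sums to $C_\alpha(\tau)=(1-q^{-1}\boldsymbol{z}^{\alpha})/(1-\boldsymbol{z}^{\alpha})$ on the nose and the constant terms come out as $1$ and $q^{-1}$ rather than some other powers. Second, and more structurally, justifying the reduction to rank one for general $G$: namely that the Bruhat position of $s\,x_\alpha(u)\,w'$ is genuinely controlled by the $\langle x_{\pm\alpha}\rangle$ computation together with the single combinatorial datum $sw'\gtrless w'$. This requires care with the Iwahori factorization $J=N_-(\mathfrak{p})T(\mathfrak{o})N(\mathfrak{o})$ and with which conjugates $w'^{-1}x_\alpha(u)\,w'$ do or do not lie in $J$. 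Once these points are pinned down, matching coefficients against the two cases is routine bookkeeping.
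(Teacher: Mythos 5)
Your approach is sound but takes a genuinely different route from the paper. The paper does not recompute the intertwining integral at all: it quotes Casselman's Theorem 3.4, which gives $\mathcal{A}_s\Phi^{\tau}_w$ explicitly in the basis $\{\Phi^{\tau s}_u\}$, and then derives (\ref{secondid}) by pure algebra --- substituting $\tau\mapsto\tau s$ and using the functional-equation symmetry $C_{\alpha}(\tau s)=C_{-\alpha}(\tau)$ together with $C_{\alpha}(\tau)+C_{-\alpha}(\tau)=1+q^{-1}$, which is exactly what collapses the two cases to the constants $1$ and $q^{-1}$. What you propose is instead a from-scratch proof of the content of Casselman's theorem: the reduction of $\mathcal{A}_s$ to the one-dimensional integral over $x_{\alpha}(F)$, the dichotomy $u\in\mathfrak{o}$ versus $u\notin\mathfrak{o}$ via the rank-one Bruhat identity, and the geometric series producing $C_{\alpha}(\tau)$. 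That route is the standard proof of the cited theorem and would work, so it buys self-containedness; but be aware that the step you label ``routine bookkeeping'' is precisely where the cited theorem earns its keep. In particular, when $w'^{-1}\alpha<0$ the conjugate $w'^{-1}x_{\alpha}(u)w'$ lies in $N_-(\mathfrak{o})$ but is in $J$ only for $u\in\mathfrak{p}$, and the units $u\in\mathfrak{o}^{\times}$ must be handled by a further rank-one decomposition; this is the source of the asymmetry between the coefficients $C_{\alpha}(\tau)-q^{-1}$ and $C_{\alpha}(\tau)-1$ (and of the $q^{-1}$ multiplying $\Phi^{\tau}_{sw}$ in the ascent case), none of which your sketch actually carries out. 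So as written the proposal is a correct plan whose decisive computation is deferred; the paper's two-line deduction from the quoted theorem avoids that computation entirely.
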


\begin{proof}
  The identities
  \[ \mathcal{A}_s \Phi^{\tau}_w = \left\{ \begin{array}{ll}
       (C_{\alpha} (\tau) - q^{- 1}) \Phi_w^{\tau s} + \Phi^{\tau s}_{sw} &
       \text{if $sw < w,$}\\
       (C_{\alpha} (\tau) - 1) \Phi_w^{\tau s} + \Phi^{\tau s}_{sw} & \text{if
       $sw > w,$}
     \end{array} \right. \]
  are Casselman~{\cite{Casselman}}, Theorem 3.4. These imply that
  \begin{equation}
    \label{firstid} \mathcal{A}_s \Phi^{\tau}_w - (C_{\alpha} (\tau) - q^{- 1}
    - 1) \Phi^{\tau s}_{w_{}} = \left\{ \begin{array}{ll}
      \Phi^{\tau s}_w + \Phi^{\tau s}_{sw} & \text{if $sw < w$,}\\
      q^{- 1} (\Phi^{\tau s}_w + \Phi^{\tau s}_{sw}) & \text{if $sw > w$.}
    \end{array} \right.
  \end{equation}
  Replacing $\tau$ by $\tau s$, noting that $C_{\alpha} (\tau s) = C_{-
  \alpha} (\tau)$ and using
\begin{equation*}
  C_{\alpha} (\tau) + C_{- \alpha} (\tau) = 1 + q^{- 1} .
\end{equation*}
we obtain (\ref{secondid}).
\end{proof}

\begin{proposition}
  \label{demazuremi}Let $\alpha$ be a simple root, and $s = s_{\alpha}$ the
  corresponding reflection. Then
  \begin{equation}
    \label{altqmagic} \mathcal{W}_{\tau} ( \mathcal{A}_s \Phi^{\tau s}_w + C_{\alpha}
    (\tau) \Phi^{\tau}_{w_{}}) = (1 - q^{- 1} \boldsymbol{z}^{\alpha})
    \partial_{\alpha}' \mathcal{W}_{\tau} \Phi_w^{\tau} .
  \end{equation}
\end{proposition}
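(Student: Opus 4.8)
The plan is to start from the left-hand side and apply the two relations we already have at our disposal: the intertwiner--Whittaker identity (\ref{wwithacomp}) and the algebraic identity (\ref{secondid}) of Proposition~\ref{casselslick}. First I would apply the Whittaker functional $\mathcal{W}_\tau$ directly to the combination $\mathcal{A}_s \Phi^{\tau s}_w + C_\alpha(\tau)\Phi^\tau_w$. The point of Proposition~\ref{casselslick} is precisely that this particular combination has a clean closed form: it equals $\Phi^\tau_w + \Phi^\tau_{sw}$ (when $sw<w$) or $q^{-1}(\Phi^\tau_w + \Phi^\tau_{sw})$ (when $sw>w$). So the left-hand side of (\ref{altqmagic}) becomes $\mathcal{W}_\tau(\Phi^\tau_w + \Phi^\tau_{sw})$ or $q^{-1}$ times it, according to the length condition. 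This reduces the problem to understanding $\mathcal{W}_\tau \Phi^\tau_{sw}$ in terms of $\mathcal{W}_\tau \Phi^\tau_w$.

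Next I would evaluate both sides on a torus element $a_\lambda$ with $\lambda$ dominant (by Lemma~\ref{notdomzer} the non-dominant case gives $0$ on both sides, so nothing is lost). The right-hand side involves $\partial'_\alpha \mathcal{W}_\tau \Phi^\tau_w$, and the operator $\partial'_\alpha$ acts on the variable $\boldsymbol{z}$ through the reflection $s_\alpha$, i.e.\ it mixes the value at $\tau=\tau_{\boldsymbol{z}}$ with the value at $\tau s = \tau_{s\boldsymbol{z}}$. The natural bridge between $\mathcal{W}_\tau \Phi^\tau_{sw}$ and $\mathcal{W}_{\tau s}\Phi^{\tau s}_w$ is the intertwining identity (\ref{wwithacomp}) specialized to $w=s$: there, the product on the right collapses to the single factor indexed by $\alpha$, giving
\[
\mathcal{W}_{\tau s}\mathcal{A}_s^\tau = \frac{1 - q^{-1}\boldsymbol{z}^{-\alpha}}{1 - \boldsymbol{z}^{\alpha}}\,\mathcal{W}_\tau .
\]
Combining this with Casselman's raw formulas for $\mathcal{A}_s\Phi^\tau_w$ (quoted inside the proof of Proposition~\ref{casselslick}) lets me express $\mathcal{W}_{\tau}\Phi^{\tau}_{sw}$ as an explicit $C_\alpha$-linear combination of $\mathcal{W}_\tau\Phi^\tau_w$ and $\mathcal{W}_{\tau s}\Phi^{\tau s}_w$.

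The computation then reduces to a bookkeeping identity among the rational functions $C_\alpha(\tau)=\frac{1-q^{-1}\boldsymbol{z}^\alpha}{1-\boldsymbol{z}^\alpha}$ and $C_{-\alpha}(\tau)=C_\alpha(\tau s)$, using the relation $C_\alpha(\tau)+C_{-\alpha}(\tau)=1+q^{-1}$ that already appears in the excerpt, together with the definition $\partial'_\alpha f(\boldsymbol{z})=\frac{f(s_i\boldsymbol{z})-\boldsymbol{z}^{-\alpha}f(\boldsymbol{z})}{1-\boldsymbol{z}^{-\alpha}}$. I would substitute, clear the common denominator $1-\boldsymbol{z}^\alpha$ (or $1-\boldsymbol{z}^{-\alpha}$), and check that the prefactor on the right organizes itself into exactly $1 - q^{-1}\boldsymbol{z}^\alpha$ times the Demazure expression. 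The two Bruhat cases $sw<w$ and $sw>w$ should merge into the single formula after the $q^{-1}$ factors are accounted for against the $C_\alpha$ versus $C_\alpha - q^{-1}$ discrepancy in Casselman's formulas.

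The main obstacle I anticipate is not conceptual but the careful matching of the $\tau$ versus $\tau s$ arguments on the two sides: the left-hand side mixes $\Phi^{\tau s}$ and $\Phi^\tau$, while $\partial'_\alpha$ on the right secretly interchanges $\boldsymbol{z}\leftrightarrow s\boldsymbol{z}$, and one must verify that the $C_\alpha$ factor produced by the intertwiner (\ref{wwithacomp}) is precisely what is needed to convert the $s$-twisted Whittaker value into the $\partial'_\alpha$-difference with the correct normalizing factor $(1-q^{-1}\boldsymbol{z}^\alpha)$. Keeping the two length cases consistent through this substitution, so that they collapse to one formula, is where the calculation must be done with care.
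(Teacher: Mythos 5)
Your proposal is correct and, once the dust settles, is essentially the paper's argument: the real content is the rank-one specialization of (\ref{wwithacomp}), rewritten as $\mathcal{W}_\tau\mathcal{A}_s^{\tau s} = \frac{1-q^{-1}\boldsymbol{z}^{\alpha}}{1-\boldsymbol{z}^{-\alpha}}\mathcal{W}_{\tau s}$, together with the observation that adding $C_\alpha(\tau)\mathcal{W}_\tau\Phi_w^\tau$ assembles the two terms into $(1-q^{-1}\boldsymbol{z}^{\alpha})\partial'_\alpha$ applied to the function $\boldsymbol{z}\mapsto\mathcal{W}_{\tau_{\boldsymbol{z}}}\Phi^{\tau_{\boldsymbol{z}}}_w$ (you correctly identify that $\partial'_\alpha$ trades $\tau$ for $\tau s$). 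The one criticism is that your opening move --- invoking Proposition~\ref{casselslick} to replace the left-hand side by $\mathcal{W}_\tau(\Phi^\tau_w+\Phi^\tau_{sw})$, or $q^{-1}$ times it --- is a detour that you then undo: to express $\mathcal{W}_\tau\Phi^\tau_{sw}$ you return to Casselman's raw formulas, which merely re-express $\Phi^\tau_{sw}$ in terms of $\mathcal{A}_s\Phi^{\tau s}_w$ and $\Phi^\tau_w$, bringing you back to the original left-hand side. The paper skips this loop: it applies the intertwiner identity directly to $\mathcal{W}_\tau\mathcal{A}_s\Phi^{\tau s}_w$, so there is no case split on $sw \lessgtr w$ and no need to evaluate at $a_\lambda$ or invoke Lemma~\ref{notdomzer}; the identity is uniform in $w$ and holds as an identity of functions. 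Your route does close (the relation $C_\alpha(\tau)+C_{-\alpha}(\tau)=1+q^{-1}$ makes the two Bruhat cases collapse exactly as you predict), but the direct computation is shorter and avoids the case analysis entirely.
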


\begin{proof}
  By (\ref{wwithacomp}), we have
  \[ \mathcal{W}_{\tau s} \mathcal{A}_s^{\tau} = \frac{1 - q^{- 1} \boldsymbol{z}^{-
     \alpha}}{1 - \boldsymbol{z}^{\alpha}} \mathcal{W}_{\tau}, \]
  which we rewrite as
  \[ \mathcal{W}_{\tau} \mathcal{A}_s^{\tau s} = \frac{1 - q^{- 1}
     \boldsymbol{z}^{\alpha}}{1 - \boldsymbol{z}^{- \alpha}} \mathcal{W}_{\tau s} . \]
  Now the left-hand side in (\ref{altqmagic}) equals
  \[ \frac{1 - q^{- 1} \boldsymbol{z}^{\alpha}}{1 - \boldsymbol{z}^{- \alpha}} \mathcal{W}_{\tau
     s} \Phi^{\tau s}_w + \frac{1 - q^{- 1} \boldsymbol{z}^{\alpha}}{1 -
     z^{\alpha}} \mathcal{W}_{\tau} \Phi^{\tau}_w = (1 - q^{- 1} \boldsymbol{z}^{\alpha})
     \frac{1}{1 - \boldsymbol{z}^{\alpha}} (\mathcal{W}_{\tau} \Phi^{\tau}_w -
     \boldsymbol{z}^{\alpha} \mathcal{W}_{\tau s} \Phi^{\tau s}) . \]
  By the definition of the Demazure operator we have
  \[ \frac{1}{1 - \boldsymbol{z}^{\alpha}} (\mathcal{W}_{\tau} \Phi^{\tau}_w -
     \boldsymbol{z}^{\alpha} \mathcal{W}_{\tau s} \Phi^{\tau s}) = \partial'_{\alpha}
     \mathcal{W}_{\tau} \Phi^{\tau}_w, \]
  and we are done.
\end{proof}

Define operators on the ring $\mathcal{O}(\hat T)$ of functions on $\hat{T}(\mathbb{C})$
as follows. Let $s_i$ be a simple reflection corresponding to the simple root $\alpha=\alpha_i$.
Define
\begin{equation}
\label{frakdpdef}
\mathfrak{D}'_i=\mathfrak{D}'_\alpha=(1 - q^{- 1} \boldsymbol{z}^{\alpha})\partial_{\alpha}',
\qquad\mathfrak{T}'_i=\mathfrak{T}'_\alpha=\mathfrak{D}'_i-1.
\end{equation}

\begin{proposition}
  \label{earlymagic}
  Suppose that $s = s_{\alpha}$ is a simple reflection and $s w < w$. Then
  \[ \mathcal{W}_{\tau} \Phi_{s w}^{\tau} = \mathfrak{T}'_\alpha\mathcal{W}_{\tau} \Phi^{\tau}_w .\]
\end{proposition}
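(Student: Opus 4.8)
The plan is to read off the result by combining the two immediately preceding propositions, which between them already encode every analytic ingredient. The key observation is that the operator $\mathfrak{D}'_\alpha = (1 - q^{-1}\boldsymbol{z}^{\alpha})\partial'_\alpha$ entering the definition $\mathfrak{T}'_\alpha = \mathfrak{D}'_\alpha - 1$ in (\ref{frakdpdef}) is precisely the operator whose action on $\mathcal{W}_{\tau}\Phi_w^{\tau}$ is computed in Proposition~\ref{demazuremi}. Thus I would begin by invoking Proposition~\ref{demazuremi} to rewrite the differential-operator side of the desired identity as a Whittaker integral of a combination of intertwiners, namely $\mathfrak{D}'_\alpha\mathcal{W}_{\tau}\Phi_w^{\tau} = \mathcal{W}_{\tau}(\mathcal{A}_s\Phi^{\tau s}_w + C_{\alpha}(\tau)\Phi^{\tau}_w)$.

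Next I would feed in Proposition~\ref{casselslick}. Since the hypothesis here is exactly $sw < w$, the relevant branch of that proposition identifies the combination $\mathcal{A}_s\Phi^{\tau s}_w + C_{\alpha}(\tau)\Phi^{\tau}_w$ with the plain sum $\Phi^{\tau}_w + \Phi^{\tau}_{sw}$, with no factor of $q^{-1}$. Applying $\mathcal{W}_{\tau}$ and using its linearity then yields $\mathfrak{D}'_\alpha\mathcal{W}_{\tau}\Phi_w^{\tau} = \mathcal{W}_{\tau}\Phi^{\tau}_w + \mathcal{W}_{\tau}\Phi^{\tau}_{sw}$.

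Finally I would subtract $\mathcal{W}_{\tau}\Phi_w^{\tau}$ from both sides and recall that $\mathfrak{T}'_\alpha = \mathfrak{D}'_\alpha - 1$; this rearranges at once into $\mathcal{W}_{\tau}\Phi^{\tau}_{sw} = \mathfrak{T}'_\alpha\mathcal{W}_{\tau}\Phi^{\tau}_w$, which is the claim. There is really no serious obstacle here: the genuine content, namely the effect of the intertwiner $\mathcal{A}_s$ on the standard basis together with its interaction with the Whittaker functional through the Casselman--Shalika functional equation (\ref{wwithacomp}), has already been absorbed into Propositions~\ref{casselslick} and~\ref{demazuremi}, so what remains is purely formal bookkeeping. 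The only point demanding attention is to match branches correctly: the hypothesis $sw < w$ must be used to select the $q^{-1}$-free case of Proposition~\ref{casselslick}, and one should keep straight the twist $\tau s$ versus $\tau$ in the intertwiner acting on $\Phi_w$.
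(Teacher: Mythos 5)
Your argument is correct and is exactly the paper's proof, merely written out: the paper also deduces the proposition immediately from Propositions~\ref{demazuremi} and~\ref{casselslick}, using the $sw<w$ branch of the latter and the identity $\mathfrak{T}'_\alpha=\mathfrak{D}'_\alpha-1$. Nothing further is needed.
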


\medbreak
\begin{proof}
  This follows immediately from Propositions~\ref{demazuremi}
  and~\ref{casselslick}.
\end{proof}

The operator $\mathfrak{T}_i'$ is closely related to the Demazure-Lusztig operator.
We will return to this point in Section~\ref{HeckeAlgebras}. At the moment,
we make use of the Whittaker function to give a short proof that they
satisfy the braid relation. Let $\mathcal{D}$ be the ring of expressions of
the form $\sum_{w \in W} f_w \cdot w$ where $f_w \in \mathcal{O}( \hat{T})$,
and the multiplication is defined by $(f_1 \cdot w_1) (f_2 \cdot w_2) = f_1
{^{w_1} f_2} \cdot w_1 w_2$.  The $\mathfrak{D}_i$ are naturally elements of
this ring. The ring $\mathcal{D}$ acts on $\mathcal{O}( \hat{T})$ in the
obvious way.

\begin{lemma}
  \label{dominantenough}
  Suppose that $D \in \mathcal{D}$ and that $D$ annihilates
  $\boldsymbol{z}^{w_0\lambda}$ for every dominant weight $\lambda$. Then $D = 0$.
\end{lemma}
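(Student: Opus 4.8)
The plan is to evaluate $D$ explicitly on the given vectors, reindex by the long element, and then separate the coefficient functions $f_w$ from one another by a Vandermonde argument. Writing $D = \sum_{w} f_w \cdot w$, the defining action of $\mathcal{D}$ on $\mathcal{O}(\hat T)$ gives $D \cdot \boldsymbol{z}^{w_0\lambda} = \sum_{w \in W} f_w\, {}^{w}(\boldsymbol{z}^{w_0\lambda}) = \sum_{w \in W} f_w\, \boldsymbol{z}^{w w_0 \lambda}$, where ${}^{w}(\boldsymbol{z}^{\mu}) = \boldsymbol{z}^{w\mu}$ is the natural $W$-action on characters. Substituting $u = w w_0$ (so $w = u w_0$, since $w_0^2 = 1$) turns the hypothesis into
\[ \sum_{u \in W} f_{u w_0}(\boldsymbol{z})\, \boldsymbol{z}^{u\lambda} = 0 \qquad \text{for every dominant weight } \lambda. \]
It therefore suffices to prove that this identity forces every coefficient $f_{uw_0}$, and hence every $f_w$, to vanish identically in $\mathcal{O}(\hat T)$.

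The main obstacle is that the $f_w$ are themselves elements of $\mathcal{O}(\hat T)$, i.e. functions of $\boldsymbol{z}$, so one cannot simply invoke linear independence of the distinct characters $\boldsymbol{z}^{u\lambda}$ over $\mathbb{C}$. To get around this I would fix a single regular dominant weight $\mu$ and feed the family $\lambda = n\mu$, for $n = 1, 2, \dots$, into the displayed identity. Since $u(n\mu) = n(u\mu)$, this reads $\sum_{u} f_{uw_0}(\boldsymbol{z})\, (\boldsymbol{z}^{u\mu})^{n} = 0$ for all $n \geq 1$. Now freeze a point $\boldsymbol{z} \in \hat{T}(\mathbb{C})$ in general position. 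Because $\mu$ is regular, the weights $u\mu$ are pairwise distinct, so for generic $\boldsymbol{z}$ the complex numbers $t_u := \boldsymbol{z}^{u\mu}$ are distinct, and they are automatically nonzero as values of characters on a torus. The equations $\sum_u f_{uw_0}(\boldsymbol{z})\, t_u^{\,n} = 0$ for $n = 1, \dots, |W|$ form a Vandermonde system (a Vandermonde matrix times $\mathrm{diag}(t_u)$) whose determinant is nonzero; hence $f_{uw_0}(\boldsymbol{z}) = 0$ for every $u$.

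The remaining step, which I expect to be the only delicate point, is the passage from vanishing at a generic $\boldsymbol{z}$ to vanishing as an element of $\mathcal{O}(\hat T)$. This is handled by observing that the finitely many bad conditions---some $\boldsymbol{z}^{u\mu - u'\mu} = 1$ forcing a coincidence $t_u = t_{u'}$, together with the poles of the finitely many $f_w$---cut out a proper closed subset of $\hat{T}(\mathbb{C})$. Since each $f_{uw_0}$ is a rational function vanishing on the dense complement, it vanishes identically, and therefore $D = 0$.
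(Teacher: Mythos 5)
Your proof is correct, but it follows a genuinely different route from the paper's. The paper uses a \emph{single} well-chosen weight: writing each $f_w$ as a finite linear combination of characters, one picks $\lambda$ (a large multiple of a regular dominant weight) so that the translated supports of the $f_w\boldsymbol{z}^{ww_0\lambda}$ are pairwise disjoint, whence the vanishing of the sum forces each $f_w=0$ immediately. You instead use an \emph{infinite family} $\lambda=n\mu$ and separate the coefficients by linear independence of the characters $\boldsymbol{z}^{u\mu}$, implemented pointwise at a generic $\boldsymbol{z}$ via a Vandermonde determinant, followed by a Zariski-density argument to upgrade generic vanishing of each $f_{uw_0}$ to identical vanishing. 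Both arguments are sound. Yours has the mild advantage of applying verbatim when the coefficients $f_w$ are genuinely rational functions (as the elements $\mathfrak{D}_i'\in\mathcal{D}$ are, with denominators $1-\boldsymbol{z}^{\pm\alpha_i}$), whereas the paper's phrase ``finite set of weights with nonzero coefficients'' implicitly treats $f_w$ as a Laurent polynomial and would need a preliminary clearing of denominators; the paper's argument, in exchange, is a one-line support computation needing no genericity or density considerations. The only points to watch in your write-up are minor and you handle them: the $t_u$ must be distinct \emph{and} the system must be inhomogeneous in the right way (your matrix is a Vandermonde matrix times $\mathrm{diag}(t_u)$, nonsingular since each $t_u\neq 0$), and the bad locus (coincidences $t_u=t_{u'}$ together with poles of the $f_w$) is a proper closed subset of the irreducible variety $\hat{T}$, so its complement is dense.
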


\begin{proof}
  Define the {\tmem{support}} of $f \in \mathcal{O}( \hat{T})$ to be the
  finite set of weights with nonzero coefficients in $f$. Let $D = \sum_{w \in
  W} f_w \cdot w$. We may choose the dominant weight $\lambda$ so that the
  functions $f_w \boldsymbol{z}^{ww_0 \lambda}$ have disjoint support. Then
  $D\boldsymbol{z}^{w_0\lambda} = 0$ implies that each $f_w = 0$ and so $D = 0$.
\end{proof}

\begin{proposition}\label{whitbr}
Let $s=s_i$ and $s_j$ be simple reflections. Then the operators
$\mathfrak{T}'_i$ and $\mathfrak{T}'_j$ satisfy the same braid relations
as $s_i$ and $s_j$. That is, if $k$ is the order of $s_is_j$ then
\begin{equation}
\label{braidp}
\mathfrak{T}'_i\mathfrak{T}'_j\mathfrak{T}'_i\cdots = 
\mathfrak{T}'_j\mathfrak{T}'_i\mathfrak{T}'_j\cdots,
\end{equation}
where $k$ is the number of factors on both sides of this
equation.
\end{proposition}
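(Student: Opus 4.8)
The plan is to test both sides of (\ref{braidp}) against the long-element Whittaker function and then invoke the rigidity supplied by Lemma~\ref{dominantenough}. Both alternating products $\mathfrak{T}'_i\mathfrak{T}'_j\cdots$ and $\mathfrak{T}'_j\mathfrak{T}'_i\cdots$ lie in the ring $\mathcal{D}$, since each $\mathfrak{T}'_\alpha=(1-q^{-1}\boldsymbol{z}^\alpha)\partial'_\alpha-1$ does; hence so does their difference $D$. By Lemma~\ref{dominantenough} it suffices to prove $D\,\boldsymbol{z}^{w_0\lambda}=0$ for every dominant weight $\lambda$. Here I would use Proposition~\ref{wflongev}, which identifies $\boldsymbol{z}^{w_0\lambda}$, up to the $\boldsymbol{z}$-independent constant $\delta^{1/2}(a_\lambda)$, with the Whittaker value $\mathcal{W}_\tau\Phi^\tau_{w_0}(a_\lambda)$. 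Since the operators $\mathfrak{T}'_\alpha$ act only in the $\boldsymbol{z}$-variable they commute with that constant, so it is enough to compute the action of each alternating product on $\mathcal{W}_\tau\Phi^\tau_{w_0}$.

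The engine is Proposition~\ref{earlymagic}: whenever $s_\alpha w<w$ one has $\mathcal{W}_\tau\Phi^\tau_{s_\alpha w}=\mathfrak{T}'_\alpha\mathcal{W}_\tau\Phi^\tau_w$. I would apply this repeatedly, peeling the operators off from the rightmost one outward. Writing the left-hand product as $\mathfrak{T}'_{a_1}\cdots\mathfrak{T}'_{a_k}$ with $(a_1,a_2,\dots)=(i,j,i,\dots)$, applying $\mathfrak{T}'_{a_k}$ to $\mathcal{W}_\tau\Phi^\tau_{w_0}$ produces $\mathcal{W}_\tau\Phi^\tau_{s_{a_k}w_0}$, and inductively applying $\mathfrak{T}'_{a_j}\cdots\mathfrak{T}'_{a_k}$ carries $\mathcal{W}_\tau\Phi^\tau_{w_0}$ to $\mathcal{W}_\tau\Phi^\tau_{(s_{a_j}\cdots s_{a_k})w_0}$. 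The point that makes every step legitimate is that left multiplication by $w_0$ reverses descents: since $\ell(uw_0)=\ell(w_0)-\ell(u)$, one has $s(uw_0)<uw_0$ if and only if $su>u$. In our situation $u$ and $su$ are the successive alternating words $s_{a_{j+1}}\cdots s_{a_k}$ and $s_{a_j}\cdots s_{a_k}$ in $\langle s_i,s_j\rangle$, of length at most $k$; in a dihedral group every alternating word of length $\le k$ is reduced, so $su>u$ holds at each stage and Proposition~\ref{earlymagic} applies.

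Carrying the induction to completion, both alternating products send $\mathcal{W}_\tau\Phi^\tau_{w_0}$ to $\mathcal{W}_\tau\Phi^\tau_{w_1w_0}$, where $w_1=s_is_js_i\cdots=s_js_is_j\cdots$ ($k$ factors) is the longest element of $\langle s_i,s_j\rangle$; the two expressions for $w_1$ coincide precisely because the braid relation already holds inside $W$. Thus $D\,\boldsymbol{z}^{w_0\lambda}=0$ for every dominant $\lambda$, and Lemma~\ref{dominantenough} yields $D=0$, which is (\ref{braidp}). The one place demanding care — and the main obstacle — is checking that the descent hypothesis of Proposition~\ref{earlymagic} is satisfied at every stage of the peeling; this is exactly where the descent-reversal identity for $w_0$ together with the reducedness of short alternating dihedral words is essential. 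The remainder is bookkeeping.
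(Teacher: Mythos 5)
Your proposal is correct and follows essentially the same route as the paper: reduce to the action on $\boldsymbol{z}^{w_0\lambda}$ via Lemma~\ref{dominantenough}, identify that monomial with $\mathcal{W}_\tau\Phi^\tau_{w_0}(a_\lambda)$ by Proposition~\ref{wflongev}, and peel off factors with Proposition~\ref{earlymagic} until both sides land on $\mathcal{W}_\tau\Phi^\tau_{w_1w_0}(a_\lambda)$ for $w_1$ the long element of $\langle s_i,s_j\rangle$. Your explicit verification of the descent hypothesis at each stage (via $\ell(uw_0)=\ell(w_0)-\ell(u)$ and reducedness of short alternating dihedral words) is exactly the point the paper's proof asserts more tersely.
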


\begin{proof}
By Lemma~\ref{dominantenough} it is enough to show that these
both have the same effect on $\boldsymbol{z}^{w_0\lambda}$ where
$\lambda$ is a dominant weight. By Proposition~\ref{wflongev},
$\boldsymbol{z}^{w_0\lambda}=c\mathcal{W}_\tau\Phi_{w_0}(a_\lambda)$
where the constant $c=(\delta^{1/2}\tau)(a_\lambda)$ is independent
of $\boldsymbol{z}$ and hence commutes with operators in $\mathcal{D}$.
Applying either side of (\ref{braidp}) to
$\mathcal{W}_\tau\Phi_{w_0}(a_\lambda)$ gives
$\mathcal{W}_\tau\Phi_{ww_0}(a_\lambda)$ where $w$ is the long
element of the rank two Weyl group $\left<s_i,s_j\right>$.
Indeed, Proposition~\ref{earlymagic} is applicable taking
$(s,w)=(s_i,w_0)$, $(s,w)=(s_j,s_iw_0)$, etc.\ until we
reach $ww_0$ since at each stage $s$ is a descent of $w$.
\end{proof}

\begin{proposition}
\label{quadrel}
The operators satisfy the quadratic relations
\begin{equation}
\label{quadrelation}
(\mathfrak{D}'_i)^2=(1+q^{-1})\mathfrak{D}_i',\qquad
(\mathfrak{T}'_i)^2=(q^{-1}-1)\mathfrak{T}_i'+q^{-1}.
\end{equation}
\end{proposition}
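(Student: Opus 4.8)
The plan is to reduce both identities to a single structural fact about the modified Demazure operator $\partial'_i$: namely that, just as the excerpt records $s_i\circ\partial_i=\partial_i$ for the unprimed operators, one also has $s_i\circ\partial'_i=\partial'_i$, so that $\partial'_i$ is idempotent with image the ring of $s_i$-invariant functions. I would establish this first. Writing $f^{s_i}(\boldsymbol{z})=f(s_i\boldsymbol{z})$ and using $s_i\boldsymbol{z}^{\alpha}=\boldsymbol{z}^{-\alpha}$, a one-line check from the definition shows $(\partial'_i f)^{s_i}=\partial'_i f$ (multiply numerator and denominator of $(\partial'_i f)^{s_i}$ by $\boldsymbol{z}^{\alpha}$ and compare with $\partial'_i f$). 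Consequently every function $g$ in the image of $\partial'_i$ satisfies $g^{s_i}=g$, and for such $g$ one gets $\partial'_i g=(g-\boldsymbol{z}^{\alpha}g)/(1-\boldsymbol{z}^{\alpha})=g$; this simultaneously gives $(\partial'_i)^2=\partial'_i$ and the invariance I will use.

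Next I would compute $(\mathfrak{D}'_i)^2$ directly from $\mathfrak{D}'_i=(1-q^{-1}\boldsymbol{z}^{\alpha})\partial'_i$. Set $h=\partial'_i f$, which is $s_i$-invariant by the previous step, so that $(\mathfrak{D}'_i)^2 f=(1-q^{-1}\boldsymbol{z}^{\alpha})\,\partial'_i\bigl[(1-q^{-1}\boldsymbol{z}^{\alpha})h\bigr]$. Applying the definition of $\partial'_i$ to the product $(1-q^{-1}\boldsymbol{z}^{\alpha})h$ and using $h^{s_i}=h$, the numerator becomes $h\bigl[(1-q^{-1}\boldsymbol{z}^{\alpha})-\boldsymbol{z}^{\alpha}(1-q^{-1}\boldsymbol{z}^{-\alpha})\bigr]$, which factors as $(1+q^{-1})(1-\boldsymbol{z}^{\alpha})h$. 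The factor $1-\boldsymbol{z}^{\alpha}$ cancels the denominator, leaving $\partial'_i[(1-q^{-1}\boldsymbol{z}^{\alpha})h]=(1+q^{-1})h$; multiplying back by $1-q^{-1}\boldsymbol{z}^{\alpha}$ yields $(\mathfrak{D}'_i)^2 f=(1+q^{-1})\mathfrak{D}'_i f$, the first quadratic relation.

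Finally I would deduce the relation for $\mathfrak{T}'_i=\mathfrak{D}'_i-1$ purely algebraically. Expanding, $(\mathfrak{T}'_i)^2=(\mathfrak{D}'_i)^2-2\mathfrak{D}'_i+1=(q^{-1}-1)\mathfrak{D}'_i+1$ by the first relation, and substituting $\mathfrak{D}'_i=\mathfrak{T}'_i+1$ gives $(\mathfrak{T}'_i)^2=(q^{-1}-1)\mathfrak{T}'_i+q^{-1}$, as claimed.

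I expect no genuine obstacle: once the invariance $s_i\circ\partial'_i=\partial'_i$ is in hand the whole argument is elementary. The only point deserving care is the algebraic identity $(1-q^{-1}\boldsymbol{z}^{\alpha})-\boldsymbol{z}^{\alpha}(1-q^{-1}\boldsymbol{z}^{-\alpha})=(1+q^{-1})(1-\boldsymbol{z}^{\alpha})$, which is precisely what singles out $1-q^{-1}\boldsymbol{z}^{\alpha}$ as the correct normalizing factor making the quadratic relation hold with the coefficient $1+q^{-1}$.
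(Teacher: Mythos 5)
Your proof is correct and follows essentially the same route as the paper: the paper expands $(\mathfrak{D}'_i)^2=(1-q^{-1}\boldsymbol{z}^{\alpha})(\partial'_i)^2-(1-q^{-1}\boldsymbol{z}^{\alpha})q^{-1}\partial'_i\boldsymbol{z}^{\alpha}\partial'_i$ and cites the standard facts $(\partial'_i)^2=\partial'_i$ and $\partial'_i\boldsymbol{z}^{\alpha}\partial'_i=-\partial'_i$, which are exactly what your $s_i$-invariance argument and the factorization $(1-q^{-1}\boldsymbol{z}^{\alpha})-\boldsymbol{z}^{\alpha}(1-q^{-1}\boldsymbol{z}^{-\alpha})=(1+q^{-1})(1-\boldsymbol{z}^{\alpha})$ establish. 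The only difference is that you prove these auxiliary identities rather than quoting them, making the argument slightly more self-contained.
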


\begin{proof}
  The two relations are equivalent. We prove the first.
  Writing $\alpha = \alpha_i$,
  \[ \mathfrak{D}_i^2 = (1 - q^{-1}\boldsymbol{z}^{\alpha}) \partial'_i (1 -
     q^{-1}\boldsymbol{z}^{\alpha}) \partial'_i = (1 - q^{-1}\boldsymbol{z}^{\alpha})
     (\partial'_i)^2 - (1 - q^{-1}\boldsymbol{z}^{\alpha}) q^{-1} \partial'_i
     \boldsymbol{z}^{\alpha} \partial'_i \]
  and the quadratic relation in the form $\mathfrak{D}_i^2 = (v +
  1)\mathfrak{D}_i$ follows from the properties $(\partial'_i)^2 = \partial_i'$
  and $\partial'_i \boldsymbol{z}^{\alpha} \partial'_i=-\partial'_i$ of the usual Demazure
  operator.
\end{proof}

If $v$ is either an element of $\mathbb{C}$ or a field containing it,
let $\mathcal{H}_v$ be the complex algebra generated by $T_i$ subject
to the quadratic relations $T_i^2=(v-1)T_i+v$ together with the
braid relations. We see from Propositions~\ref{quadrel} and~\ref{whitbr}
that there is a representation of $\mathcal{H}_{q^{-1}}$ on the
Whittaker model $\mathcal{W}_\tau M(\tau)^J$ in which $T_i$ acts
by $\mathfrak{T}_i'$.

Given any $w\in W$ we may construct an element $\mathfrak{T}_w'$ of
$\mathcal{D}$ as follows. Let $w=s_{i_1}\cdots s_{i_k}$
be any reduced decomposition of $w$ into a product of
simple reflections. Then
$\mathfrak{T}_w'=\mathfrak{T}_{i_1}'\cdots\mathfrak{T}_{i_k}'$.
This is well-defined by Proposition~\ref{whitbr}.

Iwahori and Matsumoto~\cite{IwahoriMatsumoto} showed that
the convolution ring of $J$-bi-invariant functions supported in
$G(\mathfrak{o})$ is isomorphic to $\mathcal{H}_q$ 
and that algebra acts by right convolution on $\mathcal{W}M(\tau)^J$. The
rings $\mathcal{H}_q$ and $\mathcal{H}_{q^{-1}}$ are isomorphic,
and one might wonder whether the action of $\mathcal{H}_{q^{-1}}$
we have just defined is the same. It is not, since the isomorphism
class of the convolution action depends on $\tau$, while the isomorphism
class of the representation that we have just defined does not.

\begin{theorem}
\label{midmagic}
Let $w\in W$ and let $\lambda$ be a dominant weight. Then
\begin{equation}
\mathcal{W}_\tau\Phi_{ww_0}^\tau(a_\lambda)=
\delta^{1/2}(a_\lambda)\mathfrak{T}_w'\boldsymbol{z}^{w_0\lambda}.
\end{equation}
\end{theorem}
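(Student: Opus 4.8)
The plan is to prove the identity by induction on $\ell(w)$, using Proposition~\ref{wflongev} as the base case and Proposition~\ref{earlymagic} for the inductive step. When $w = 1$ the operator $\mathfrak{T}'_1$ is the identity and $ww_0 = w_0$, so the claim reads $\mathcal{W}_\tau\Phi_{w_0}^\tau(a_\lambda) = \delta^{1/2}(a_\lambda)\boldsymbol{z}^{w_0\lambda}$, which is exactly Proposition~\ref{wflongev}.

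For the inductive step, fix $w$ with $\ell(w) \geq 1$ and choose a reduced decomposition $w = s_{i_1}\cdots s_{i_k}$. Write $s = s_{i_1}$ and $w' = s_{i_2}\cdots s_{i_k}$, so that $w = sw'$ with $\ell(w') = \ell(w) - 1$. The key combinatorial point is that $s$ is a \emph{left descent} of $w'w_0$. Indeed, since right multiplication by $w_0$ reverses lengths, $\ell(uw_0) = \ell(w_0) - \ell(u)$ for all $u$, and $\ell(w) = \ell(w')+1 > \ell(w')$ gives $\ell(ww_0) = \ell(w_0)-\ell(w) < \ell(w_0)-\ell(w') = \ell(w'w_0)$. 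Since $ww_0 = s(w'w_0)$, this says $s(w'w_0) < w'w_0$, which is precisely the hypothesis needed to apply Proposition~\ref{earlymagic} to the pair $(s, w'w_0)$. That proposition then yields the operator identity $\mathcal{W}_\tau\Phi_{ww_0}^\tau = \mathfrak{T}'_\alpha\,\mathcal{W}_\tau\Phi_{w'w_0}^\tau$, where $\alpha = \alpha_{i_1}$.

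Evaluating at $a_\lambda$ and substituting the induction hypothesis for $\mathcal{W}_\tau\Phi_{w'w_0}^\tau(a_\lambda)$ gives
\[
\mathcal{W}_\tau\Phi_{ww_0}^\tau(a_\lambda) = \mathfrak{T}'_\alpha\bigl(\delta^{1/2}(a_\lambda)\,\mathfrak{T}'_{w'}\boldsymbol{z}^{w_0\lambda}\bigr) = \delta^{1/2}(a_\lambda)\,\mathfrak{T}'_\alpha\mathfrak{T}'_{w'}\boldsymbol{z}^{w_0\lambda},
\]
where I have used that the scalar $\delta^{1/2}(a_\lambda)$ is independent of $\boldsymbol{z}$ and hence commutes with $\mathfrak{T}'_\alpha$. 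Finally, because $w = s_{i_1}\cdots s_{i_k}$ is reduced, the composite $\mathfrak{T}'_\alpha\mathfrak{T}'_{w'} = \mathfrak{T}'_{i_1}\cdots\mathfrak{T}'_{i_k}$ equals $\mathfrak{T}'_w$; the well-definedness of this expression (its independence of the chosen reduced word) is guaranteed by the braid relations of Proposition~\ref{whitbr}. This completes the induction.

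The one step that needs genuine care is the descent verification in the middle paragraph: one must confirm that the pair $(s, w'w_0)$ really satisfies the hypothesis $s(w'w_0) < w'w_0$ of Proposition~\ref{earlymagic}, rather than applying that proposition in the wrong direction. Everything else is bookkeeping, but this length computation is what correctly matches the $w_0$-twisted indexing of the Whittaker functions against the untwisted operator $\mathfrak{T}'_w$, and it is the reason the reduced word for $w$ (not for $ww_0$) is the right input to the induction.
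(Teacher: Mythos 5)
Your proof is correct and takes essentially the same route as the paper's, which disposes of the theorem in two sentences (base case $w=1$ from Proposition~\ref{wflongev}, general case by repeated application of Proposition~\ref{earlymagic}); you have simply made explicit the induction on $\ell(w)$, the verification that $s(w'w_0)<w'w_0$ so that Proposition~\ref{earlymagic} applies, and the appeal to Proposition~\ref{whitbr} for the well-definedness of $\mathfrak{T}'_w$. The descent computation you flag as the delicate point is indeed the only content beyond bookkeeping, and you carry it out correctly.
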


\begin{proof}
If $w=1$, this follows from Proposition~\ref{wflongev}. The
general case follows by repeated applications of
Proposition~\ref{earlymagic}.
\end{proof}

In passing from the functions $\mathcal{W}_\tau\Phi_w^\tau$ to
$\mathcal{W}_\tau\widetilde\Phi_w^\tau$, the combinatorics of the Bruhat order
begins to play a role.

\begin{proposition}
  \label{deodchar}Let $s$ be a simple reflection and $w_1, w_2 \in W$.
  
  (i) Assume that $sw_1 < w_1$ and $sw_2 < w_2$. Then $w_1 \leqslant w_2$ if
  and only if $sw_1 \leqslant w_2$ if and only if $sw_1 \leqslant sw_2$.
  
  (ii) Assume that $sw_1 > w_1$ and $sw_2 > w_2$. Then $w_1 \geqslant w_2$ if
  and only if $sw_1 \geqslant w_2$ if and only if $sw_1 \geqslant sw_2$.
\end{proposition}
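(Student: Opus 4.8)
The plan is to deduce the whole statement from the \emph{lifting property} of the Bruhat order (sometimes called Property Z): if $u < w$ with $sw < w$ and $su > u$, then $u \leqslant sw$ and $su \leqslant w$. Because parts (i) and (ii) are mirror images, I would first reduce (ii) to (i). Given $w_1, w_2$ with $sw_1 > w_1$ and $sw_2 > w_2$, set $w_1' = sw_2$ and $w_2' = sw_1$; these satisfy the descent hypothesis of (i), since $s w_1' = w_2 < sw_2 = w_1'$ and similarly $sw_2' < w_2'$. Part (i) applied to $(w_1', w_2')$ yields $sw_2 \leqslant sw_1 \iff w_2 \leqslant sw_1 \iff w_2 \leqslant w_1$, which read backwards is exactly the chain asserted in (ii). So it suffices to prove (i).

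For (i) I would substitute $a = sw_1$ and $b = sw_2$, so that $w_1 = sa$, $w_2 = sb$, and the hypotheses $sw_i < w_i$ become the statement that $s$ is an \emph{ascent} for both $a$ and $b$, i.e.\ $sa > a$ and $sb > b$. The three conditions to be shown equivalent then read (A) $sa \leqslant sb$, (B) $a \leqslant sb$, and (C) $a \leqslant b$, and I would arrange the proof as a cycle through (B). The implications (A)$\Rightarrow$(B) and (C)$\Rightarrow$(B) are immediate from monotonicity: from $a \leqslant sa$ and $b \leqslant sb$ one gets $a \leqslant sa \leqslant sb$ and $a \leqslant b \leqslant sb$ respectively. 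The reverse direction is where the lifting property enters: applying it with $u = a$ and $w = sb$ (note $s$ descends $sb$ and ascends $a$) produces \emph{simultaneously} $a \leqslant s(sb) = b$, which is (C), and $sa \leqslant sb$, which is (A). Hence (B) implies both, and the cycle closes.

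The only genuine subtlety, and the step I expect to require the most care, is that the lifting property needs the strict inequality $a < sb$, whereas (B) only provides $a \leqslant sb$. I would dispose of the equality case directly: if $a = sb$ then $sa = b$, and combining $sa > a$ with $b < sb$ would force $b > sb$, a contradiction; thus $a < sb$ whenever (B) holds, and the lifting property applies. Should a self-contained treatment be preferred over citing the lifting property, one can replace that step by an appeal to the strong exchange condition or to the subword characterization of the Bruhat order, the combinatorial content being identical.
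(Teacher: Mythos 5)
Your proof is correct, and it is organized differently from the paper's. The paper disposes of part (i) by citing it directly as Deodhar's property $Z(s,w_1,w_2)$, and then obtains (ii) from (i) via the order-reversing bijection $w \mapsto ww_0$ (which also exchanges left ascents and left descents, a point the paper leaves implicit). You instead derive (i) from the lifting property, with a correct handling of the only delicate point (ruling out the equality case $a = sb$ before invoking the strict inequality the lifting property requires), and you reduce (ii) to (i) by the substitution $(w_1,w_2) \mapsto (sw_2,sw_1)$ rather than by right multiplication by $w_0$. Both reductions are valid; yours has the mild advantage of staying inside the rank-one considerations already in play and of not requiring the descent-swapping behavior of $w \mapsto ww_0$, while the paper's is a one-line appeal to a standard involution. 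Mathematically the content is the same circle of ideas---property $Z$ and the lifting property are essentially equivalent formulations---so your argument amounts to an unpacking of the citation the paper makes, which is a reasonable thing to supply in a self-contained treatment.
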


\begin{proof}
  Part (i) is a well-known property of Coxeter groups, called property $Z (s,
  w_1, w_2)$ by Deodhar~{\cite{Deodhar}}. Note that $w \longmapsto ww_0$ is an
  order reversing bijection of $W$. Applying this gives (ii).
\end{proof}

Suppose that $s = s_{\alpha}$ is a descent of $w \in W$: $sw < w$. Then we
will define
\[ H' (w, s) =\{u \in W|u, su \geqslant w\}. \]
\begin{proposition}
  \label{cofinal}The set $H' (w, s)$ is cofinal in $W$ in the sense that if $u
  \in H' (w, s)$ and $t \geqslant u$ then $t \in H' (w, s)$.
\end{proposition}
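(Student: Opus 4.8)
The plan is to verify that $H'(w,s)$ is upward closed under the Bruhat order directly from the definition, using only Deodhar's property $Z$ (Proposition~\ref{deodchar}). Fix $u \in H'(w,s)$, so that $u \geq w$ and $su \geq w$, and let $t \geq u$; I must show $t \in H'(w,s)$, i.e.\ that $t \geq w$ and $st \geq w$. The first inequality is immediate by transitivity, since $t \geq u \geq w$, so the whole proposition reduces to proving the single inequality $st \geq w$.

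First I would dispose of the case in which $s$ is \emph{not} a left descent of $t$, that is $st > t$: then $st > t \geq u \geq w$ gives $st \geq w$ at once. The remaining case is $st < t$, and here I would split according to whether $s$ is a descent of $u$. If $su < u$, then $s$ is a left descent of both $u$ and $t$, and since $u \leq t$ Proposition~\ref{deodchar}(i) yields $su \leq st$; combined with $su \geq w$ this gives $st \geq su \geq w$. If instead $su > u$, the trick is to apply Proposition~\ref{deodchar}(i) not to the pair $(u,t)$ but to the pair $(su,t)$: since $s(su) = u < su$ and $st < t$, the reflection $s$ is a left descent of both $su$ and $t$, and the equivalences $su \leq t \Leftrightarrow u \leq t \Leftrightarrow u \leq st$ convert the hypothesis $u \leq t$ into $u \leq st$, so that $st \geq u \geq w$. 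In either case $st \geq w$, which finishes the argument.

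The only real subtlety, and the step I expect to be the crux, is the second subcase $su > u$: property $Z$ requires $s$ to be a descent of \emph{both} of its arguments, and while $u$ fails this, the element $su$ does not, so one must feed $su$ into the proposition and read off the conclusion for $t$. Everything else is routine manipulation of the Bruhat order, and no input beyond Proposition~\ref{deodchar} is needed. As a consistency check, the two subcases use the two defining conditions of $H'(w,s)$ in complementary ways: the condition $su \geq w$ is used precisely when $su < u$, while $u \geq w$ is used when $su > u$.
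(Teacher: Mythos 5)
Your proof is correct and follows essentially the same route as the paper: dispose of the case $st>t$ trivially, then apply Deodhar's property $Z$ (Proposition~\ref{deodchar}(i)) to the pair $(su,t)$ when $su>u$, which is exactly the paper's key step. The only cosmetic difference is in the subcase $su<u$, which the paper handles by a without-loss-of-generality swap of $u$ and $su$ rather than by a second direct application of property $Z$.
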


\begin{proof}
  We have $t \geqslant u$ with both $u, su \geqslant w$. We wish to show that
  $t \in H' (w, s)$. We may assume without loss of
  generality that $su > u$. For if not, then $su < u$ so $t \geqslant su$.
  Thus interchanging $u$ and $su$ if necessary, we may assume that $su > u$.
  Also without loss of generality, $t > st$ since otherwise both $t, st$ are
  $\geqslant u \geqslant w$ as required. Now taking $w_1 = su$ and $w_2 = t$
  in Proposition~\ref{deodchar}~(i), we see that both $t, st \geqslant u$ and
  so $t \in H' (w, s)$.
\end{proof}

Define an integer-valued function $c'_{w, s}$ on $W$ by
\[ c'_{w, s} (u) = \sum_{\tmscript{\begin{array}{c}
     t \in H' (w, s)\\
     t \leqslant u
   \end{array}}} (- 1)^{l (t) - l (u)} . \]
\begin{theorem}
  \label{magictheorem}Let $\alpha$ be a simple root, and let $s = s_{\alpha}$
  denote the corresponding reflection. Assume that $sw < w$. Then
  \begin{equation}
    \label{firstmagic} \mathcal{W}_{\tau} \tilde{\Phi}^{\tau}_{sw} = (1 - q^{- 1}
    \boldsymbol{z}^{\alpha}) \partial'_{\alpha} \mathcal{W}_{\tau} \tilde{\Phi}^{\tau}_w -
    q^{- 1} \sum_{u \in H' (w, s)} \mathcal{W}_{\tau} \Phi^{\tau}_u .
  \end{equation}
  Equivalently,
  \begin{equation}
    \label{secondmagic} \mathcal{W}_{\tau} \tilde{\Phi}^{\tau}_{sw} = (1 - q^{- 1}
    \boldsymbol{z}^{\alpha}) \partial'_{\alpha} \mathcal{W}_{\tau} \tilde{\Phi}^{\tau}_w -
    q^{- 1} \sum_{u \in H' (w, s)} c'_{w, s} (u) \mathcal{W}_{\tau}
    \tilde{\Phi}^{\tau}_u .
  \end{equation}
\end{theorem}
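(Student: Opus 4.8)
The plan is to expand $\mathcal{W}_\tau\tilde{\Phi}^\tau_w = \sum_{u\geqslant w}\mathcal{W}_\tau\Phi^\tau_u$ and apply the operator $\mathfrak{D}'_\alpha=(1-q^{-1}\boldsymbol{z}^\alpha)\partial'_\alpha$ to it term by term, turning (\ref{firstmagic}) into a bookkeeping statement about the Whittaker functions $\mathcal{W}_\tau\Phi^\tau_v$ indexed by $W$. The basic input is a two-case formula for $\mathfrak{D}'_\alpha$ on a single standard basis vector,
\[
\mathfrak{D}'_\alpha\,\mathcal{W}_\tau\Phi^\tau_u=
\begin{cases}
\mathcal{W}_\tau\Phi^\tau_u+\mathcal{W}_\tau\Phi^\tau_{su} & \text{if } su<u,\\
q^{-1}\bigl(\mathcal{W}_\tau\Phi^\tau_u+\mathcal{W}_\tau\Phi^\tau_{su}\bigr) & \text{if } su>u;
\end{cases}
\]
the $su<u$ case is Proposition~\ref{earlymagic} (recall $\mathfrak{T}'_\alpha=\mathfrak{D}'_\alpha-1$), and in fact both cases follow at once by applying $\mathcal{W}_\tau$ to the two branches of Proposition~\ref{casselslick} and comparing with Proposition~\ref{demazuremi}. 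Summing over $u\geqslant w$ therefore writes $\mathfrak{D}'_\alpha\mathcal{W}_\tau\tilde{\Phi}^\tau_w$ as an explicit $\mathbb{Z}[q^{-1}]$-combination of the $\mathcal{W}_\tau\Phi^\tau_v$, and (\ref{firstmagic}) is equivalent to the assertion that this combination equals $\mathcal{W}_\tau\tilde{\Phi}^\tau_{sw}+q^{-1}\sum_{u\in H'(w,s)}\mathcal{W}_\tau\Phi^\tau_u$.

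To verify this I would organize everything by the $s$-strings $\{y,sy\}$ with $y<sy$, since each $\mathfrak{D}'_\alpha\mathcal{W}_\tau\Phi^\tau_u$ lies in the span of $\mathcal{W}_\tau\Phi^\tau_y$ and $\mathcal{W}_\tau\Phi^\tau_{sy}$. The heart of the argument---the step I expect to be the main obstacle---is the Bruhat-order combinatorics deciding which strings contribute. Using Proposition~\ref{deodchar} (Deodhar's property $Z$) together with $sw<w$, one shows that for a string $\{y,sy\}$ exactly one of three things occurs: both $y,sy\geqslant w$; only $sy\geqslant w$; or neither is $\geqslant w$. The possibility ``$y\geqslant w$ but $sy\not\geqslant w$'' is excluded by transitivity, since $y\geqslant w$ forces $y>w$ (otherwise $sy=sw<w=y$, contradicting $y<sy$) and hence $sy>y>w$. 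The same proposition, in the form ``$sy\geqslant w \iff sy\geqslant sw \iff y\geqslant sw$,'' shows that both members of a string are $\geqslant sw$ exactly in the first two cases and neither is in the third. Since $H'(w,s)$ consists precisely of the members of the ``both $\geqslant w$'' strings, a string-by-string comparison then matches the left-hand contributions $(1+q^{-1})(\mathcal{W}_\tau\Phi^\tau_y+\mathcal{W}_\tau\Phi^\tau_{sy})$, $\;\mathcal{W}_\tau\Phi^\tau_y+\mathcal{W}_\tau\Phi^\tau_{sy}$, $\;0$ in the three cases against the right-hand side, establishing (\ref{firstmagic}).

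Finally, the equivalent form (\ref{secondmagic}) follows from (\ref{firstmagic}) by Möbius inversion on the Bruhat order. Inverting $\tilde{\Phi}^\tau_u=\sum_{t\geqslant u}\Phi^\tau_t$ gives $\mathcal{W}_\tau\Phi^\tau_u=\sum_{t\geqslant u}(-1)^{l(t)-l(u)}\mathcal{W}_\tau\tilde{\Phi}^\tau_t$, whence
\[
\sum_{u\in H'(w,s)}\mathcal{W}_\tau\Phi^\tau_u
=\sum_{t}\Bigl(\sum_{\substack{u\in H'(w,s)\\ u\leqslant t}}(-1)^{l(t)-l(u)}\Bigr)\mathcal{W}_\tau\tilde{\Phi}^\tau_t
=\sum_t c'_{w,s}(t)\,\mathcal{W}_\tau\tilde{\Phi}^\tau_t .
\]
The cofinality of $H'(w,s)$ (Proposition~\ref{cofinal}) then guarantees $c'_{w,s}(t)=0$ unless $t\in H'(w,s)$: if some $u\in H'(w,s)$ satisfies $u\leqslant t$, then $t\geqslant u$ forces $t\in H'(w,s)$ as well, so the index set may be restricted to $t\in H'(w,s)$, which is exactly (\ref{secondmagic}).
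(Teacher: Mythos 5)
Your proof is correct and follows essentially the same route as the paper's: both expand $\tilde{\Phi}^\tau_w$ into standard basis vectors, apply the two-case formula coming from Propositions~\ref{casselslick} and~\ref{demazuremi}, use Deodhar's property $Z$ (Proposition~\ref{deodchar}) to reassemble the descent-side contributions into $\mathcal{W}_\tau\tilde{\Phi}^\tau_{sw}$ and identify the ascent-side contributions with $H'(w,s)$, and finish with M\"obius inversion plus the cofinality of $H'(w,s)$. Your organization by $s$-strings is just a repackaging of the paper's split of the sum over $x\geqslant w$ into the cases $sx<x$ and $sx>x$.
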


\begin{proof}
  By Proposition~\ref{demazuremi}
  \[ (1 - q^{- 1} \boldsymbol{z}^{\alpha}) \partial'_{\alpha} \mathcal{W}_{\tau}
     \tilde{\Phi}^{\tau}_w = \sum_{x \geqslant w} \mathcal{W}_{\tau} ( \mathcal{A}_s
     \Phi^{\tau s}_x + C_{\alpha} (\tau) \Phi^{\tau}_{x_{}}) . \]
  We split the sum into two parts according as $sx < x$ or $sx > x$ and use
  Proposition~\ref{casselslick}. We have
  \[ \sum_{\tmscript{\begin{array}{c}
       x \geqslant w\\
       sx < x
     \end{array}}} \mathcal{W}_{\tau} ( \mathcal{A}_s \Phi^{\tau s}_x + C_{\alpha}
     (\tau) \Phi^{\tau}_{x_{}}) = \sum_{\tmscript{\begin{array}{c}
       x \geqslant w\\
       sx < x
     \end{array}}} (\mathcal{W}_{\tau} \Phi_x^{\tau} + \mathcal{W}_{\tau} \Phi_{sx}^{\tau}) . \]
  By Proposition~\ref{deodchar}~(i) with $w_1 = w$ and $w_2 = x$, we see that
  \[ \bigcup_{\tmscript{\begin{array}{c}
       x \geqslant w\\
       sx < x
     \end{array}}} \{x, sx\}=\{u \in W|u \geqslant sw\}. \]
  Therefore this contribution equals $\mathcal{W}_{\tau} \tilde{\Phi}^{\tau}_{sw}$. On
  the other hand, let us consider the contributions from $sx > x$. By
  Proposition~\ref{casselslick} these contribute
  \[ q^{- 1} \sum_{\tmscript{\begin{array}{c}
       x \geqslant w\\
       sx > x
     \end{array}}} (\mathcal{W}_{\tau} \Phi_x^{\tau} + \mathcal{W}_{\tau} \Phi_{sx}^{\tau}) = q^{-
     1} \sum_{u \in H' (w, s)} \mathcal{W}_{\tau} \Phi^{\tau}_u . \]
  This proves (\ref{firstmagic}). By M\"obius inversion (Verma~{\cite{Verma}},
  Deodhar~{\cite{Deodhar}}) we may write
  \[ \Phi_u^{\tau} = \sum_{t \leqslant u} (- 1)^{l (t) - l (u)}
     \tilde{\Phi}_t^{\tau}, \]
  and substituting this gives (\ref{secondmagic}).
\end{proof}

The function $c'_{w, s}$ has a tendency to take on only a few nonzero values.
It vanishes off $H' (w, s)$. This sparseness means there are usually only a
few terms on the right-hand side in (\ref{secondmagic}). For example, in
$A_3$, if we consider the pairs $w,s$ where $s$ is a left descent of $w$, we
find sixteen such pairs where $c_{w,s}'$ is always zero. Thus for these pairs
the identity (\ref{secondmagic}) takes the form
\[ \mathcal{W}_{\tau s} ( \tilde{\Phi}^{\tau s}_{sw}) = (1 - q^{- 1} \boldsymbol{z}^{-
   \alpha}) \partial'_{\alpha} \mathcal{W}_{\tau} ( \tilde{\Phi}^{\tau s}_w) . \]
There are seventeen pairs $(w,s)$ such that $c'_{w, s} (u) \neq 0$ for only
one particular $u$.  Then
\[ \mathcal{W}_{\tau s} ( \tilde{\Phi}^{\tau s}_{sw}) = (1 - q^{- 1} \text{}
   \boldsymbol{z}^{- \alpha}) \partial'_{\alpha} \mathcal{W}_{\tau} ( \tilde{\Phi}^{\tau
   s}_w) - q^{- 1} \mathcal{W}_{\tau s} ( \tilde{\Phi}^{\tau s}_u) . \]
Finally, there are three cases where
\[ \mathcal{W}_{\tau s} ( \tilde{\Phi}^{\tau s}_w) = (1 - q^{- 1} \text{} \boldsymbol{z}^{-
   \alpha}) \partial'_{\alpha} \mathcal{W}_{\tau} ( \tilde{\Phi}^{\tau s}_w) - q^{- 1}
   \mathcal{W}_{\tau s} ( \tilde{\Phi}^{\tau s}_u) - q^{- 1} \mathcal{W}_{\tau s} (
   \tilde{\Phi}^{\tau s}_t) + q^{- 1} \mathcal{W}_{\tau s} ( \tilde{\Phi}^{\tau s}_x) .
\]
These are:
\[ \begin{array}{|l|l|l|l|l|}
     \hline
     w & s & u & t & x\\
     \hline
     s_2 & s_2 & s_1 s_2 & s_3 s_2 & s_1 s_3 s_2\\
     \hline
     s_3 s_1 & s_1 & s_2 s_3 s_1 & s_3 s_2 s_1 & s_2 s_3 s_2 s_1\\
     \hline
     s_3 s_1 & s_3 & s_2 s_3 s_1 & s_1 s_2 s_3 & s_2 s_1 s_2 s_3\\
     \hline
   \end{array} \]

The ring $\mathcal{O}( \hat{T})$ of regular functions on $\hat{T}$ is the
complex group algebra over the weight lattice $\Lambda$ of $\hat{T}$. Let $v$
be an indeterminate, and let $\mathcal{R}=\mathbb{C}[v] \otimes \mathcal{O}(
\hat{T})$. If $P \in \mathcal{R}$ we will denote by $P (\boldsymbol{z}, q^{-
1})$ the image of $P$ under the specialization map $\mathcal{R}
\longrightarrow \mathcal{O}( \hat{T})$ that sends $v$ to~$q^{- 1}$.

\begin{theorem}
  \label{wasthmone}Let $\lambda$ be dominant. There exists an element
  $P_{\lambda, w}$ of $\mathcal{R}$ such that
  \[ \mathcal{W}\tilde{\Phi}_w^{\tau} (a_{\lambda}) = \delta^{1 / 2} (a_{\lambda})
     P_{\lambda, w} ( \boldsymbol{z}, q^{- 1}) . \]
  We have
  \[ P_{\lambda, w} ( \boldsymbol{z}, 0) = \partial_w' \boldsymbol{z}^{w_0 \lambda} .
  \]
\end{theorem}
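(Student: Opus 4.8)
The plan is to prove both assertions together by a single descending induction on the Bruhat order, beginning at the long element $w_0$. The base case is Proposition~\ref{wflongev}: since $\tilde{\Phi}_{w_0}^{\tau}=\Phi_{w_0}^{\tau}$ it gives $\mathcal{W}_{\tau}\tilde{\Phi}_{w_0}^{\tau}(a_\lambda)=\delta^{1/2}(a_\lambda)\boldsymbol{z}^{w_0\lambda}$, so $P_{\lambda,w_0}=\boldsymbol{z}^{w_0\lambda}$, which lies in $\mathcal{R}$ (it is independent of $v$) and at $v=0$ equals $\boldsymbol{z}^{w_0\lambda}$. The descent is powered by Theorem~\ref{magictheorem}: for a simple reflection $s=s_\alpha$ with $sw<w$, dividing equation~(\ref{secondmagic}) by the $\boldsymbol{z}$-independent constant $\delta^{1/2}(a_\lambda)$ and writing $v=q^{-1}$ gives
\[ P_{\lambda,sw}=\mathfrak{D}'_\alpha P_{\lambda,w}-v\sum_{u\in H'(w,s)}c'_{w,s}(u)\,P_{\lambda,u},\qquad \mathfrak{D}'_\alpha=(1-v\boldsymbol{z}^\alpha)\partial'_\alpha. \]
The combinatorial input that makes this a genuine recursion is that every $u$ in the correction sum lies in $H'(w,s)$ and hence satisfies $u\geq w>sw$, so it is strictly higher than the element $sw$ being produced and is already handled by the inductive hypothesis.

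First I would settle existence, i.e.\ that $P_{\lambda,w}\in\mathcal{R}=\mathbb{C}[v]\otimes\mathcal{O}(\hat T)$. Descending from $w_0$, assume the claim for all elements exceeding $sw$. On the right-hand side above, $\partial'_\alpha$ preserves the group ring $\mathcal{O}(\hat T)$ (for a monomial $\boldsymbol{z}^\mu$ the quotient reduces to $\boldsymbol{z}^\mu(1-\boldsymbol{z}^{(1-\langle\mu,\alpha^\vee\rangle)\alpha})/(1-\boldsymbol{z}^\alpha)$, a Laurent polynomial), multiplication by $(1-v\boldsymbol{z}^\alpha)$ preserves $\mathcal{R}$, and each $P_{\lambda,u}$ in the correction term already lies in $\mathcal{R}$ since $u>sw$. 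Hence $P_{\lambda,sw}\in\mathcal{R}$, which is the first displayed formula of the theorem.

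For the specialization I would set $v=0$. The entire correction sum carries the factor $v$ and so vanishes, collapsing the recursion to $P_{\lambda,sw}(\boldsymbol{z},0)=\partial'_\alpha P_{\lambda,w}(\boldsymbol{z},0)$. Unwinding one saturated descending chain $w_0,\ s_{i_1}w_0,\ s_{i_2}s_{i_1}w_0,\ \dots,\ w$, in which each step left-multiplies by a simple reflection that decreases length, yields $P_{\lambda,w}(\boldsymbol{z},0)=\partial'_{i_m}\cdots\partial'_{i_1}\boldsymbol{z}^{w_0\lambda}$ with $m=\ell(w_0)-\ell(w)$. Because $w=s_{i_m}\cdots s_{i_1}w_0$ and this word has length $m=\ell(ww_0)$, it is a reduced expression for $ww_0$, so $\partial'_{i_m}\cdots\partial'_{i_1}=\partial'_{ww_0}$ and $P_{\lambda,w}(\boldsymbol{z},0)=\partial'_{ww_0}\boldsymbol{z}^{w_0\lambda}$. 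I should flag that the index $ww_0$ (rather than $w$) is forced both by the base case $P_{\lambda,w_0}(\boldsymbol{z},0)=\boldsymbol{z}^{w_0\lambda}=\partial'_1\boldsymbol{z}^{w_0\lambda}$ and by consistency with~(\ref{xasdc}): substituting $P_{\lambda,u}(\boldsymbol{z},0)=\partial'_{uw_0}\boldsymbol{z}^{w_0u^{-1}\text{-weight}}$ into~(\ref{bigxdef}) and using $\iota\partial'_u=\partial_u\iota$ for the inversion $\iota:\boldsymbol{z}\mapsto\boldsymbol{z}^{-1}$ returns exactly $\boldsymbol{X}_w(\boldsymbol{z},\lambda,0)=\partial_w\boldsymbol{z}^\lambda$. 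Accordingly I read the displayed operator in the theorem as $\partial'_{ww_0}$.

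The step I expect to be the main obstacle is the well-definedness of this last identification: a priori, different descending chains from $w_0$ to $w$ could present the common value $P_{\lambda,w}(\boldsymbol{z},0)$ through different operator products. This is resolved by the braid relations satisfied by the Demazure operators $\partial'_\alpha$—the same relations that make $\partial'_w$ well-defined in the first place—which guarantee that $\partial'_{ww_0}$ depends only on the element $ww_0$ and not on the chosen chain. The only remaining point requiring care is the bookkeeping that keeps the induction well-founded, namely that every $u$ with $c'_{w,s}(u)\neq 0$ satisfies $u\geq w$; this is exactly the cofinality of $H'(w,s)$ proved in Proposition~\ref{cofinal}, and it is what lets the $v$-linear correction terms disappear cleanly at $v=0$.
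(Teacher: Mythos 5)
Your argument is correct and is essentially the paper's own proof: the same descending induction starting from $w_0$ (base case Proposition~\ref{wflongev}), with the inductive step supplied by (\ref{secondmagic}), membership in $\mathcal{R}$ read off term by term from the right-hand side, and the correction sum vanishing at $v=0$ because it carries an explicit factor of $v$, the cofinality of $H'(w,s)$ from Proposition~\ref{cofinal} keeping the recursion well-founded. Your index flag is also justified: as printed, the display $P_{\lambda,w}(\boldsymbol{z},0)=\partial'_w\boldsymbol{z}^{w_0\lambda}$ fails already at $w=w_0$ (where $P_{\lambda,w_0}=\boldsymbol{z}^{w_0\lambda}$, not the full character $\partial'_{w_0}\boldsymbol{z}^{w_0\lambda}$), and the paper's own final step commits the matching slip\emdash for $sw<w$ idempotence gives $\partial'_i\partial'_w=\partial'_w$, not $\partial'_{sw}$\emdash whereas the intended identity is $\partial'_i\partial'_{ww_0}=\partial'_{(sw)w_0}$, valid since $l((sw)w_0)=l(ww_0)+1$, so your reading $P_{\lambda,w}(\boldsymbol{z},0)=\partial'_{ww_0}\boldsymbol{z}^{w_0\lambda}$ is the one forced by the base case and consistent with Theorem~\ref{midmagic} and with (\ref{xasdc}).
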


It should be emphasized that $P_{\lambda, w}$ is independent of $q$ and of the
field $F$.

\begin{proof}
  This may be proved by induction on $l (ww_0)$. If $w = w_0$ then by
  Proposition~\ref{wflongev} we have $P_{\lambda, w_0} ( \boldsymbol{z}, q^{- 1})
  = \boldsymbol{z}^{w_0 \lambda}$, so the assertion is true. In general, we may
  apply (\ref{secondmagic}) to deduce both statements for $sw$ when it is
  known for Weyl group elements $\geqslant w$. Indeed, by induction every term
  on the right-hand side is in $\mathcal{R}$, so $P_{\lambda, sw} \in
  \mathcal{R}$, and specializing $q^{- 1} \longrightarrow 0$ produces simply
  $\partial_i' P_{\lambda, w} = \partial'_i \partial_w' \boldsymbol{z}^{w_0
  \lambda} = \partial'_{sw} \boldsymbol{z}^{w_0 \lambda}$.
\end{proof}

\section{\label{bottsam}Fibers of Partial Bott-Samelson Varieties}

Let $G$ be a complex reductive group, and let $B$ be a Borel subgroup. In the
application to Whittaker functions, $G$ will be the group formerly denoted
$\hat{G} ( \mathbb{C})$, but we suppress the hat in this section.

Let $X = G / B$ be the flag variety. If $w$ is an element of the Weyl group
$W$, let $X_w^{\circ}$ be the image of $BwB$ in $X$. The closure
\[ X_w = \bigcup_{u \leqslant w} X_u^{\circ} \]
is the closed Schubert cell.

Choose a reduced decomposition $\mathfrak{w} = (s_{h_1}, \cdots, s_{h_k})$ of
$w = s_{h_1} \cdots s_{h_k}$ into a product of simple reflections. To define
the Bott-Samelson variety $Z_{\mathfrak{w}}$, let $P_i$ be the parabolic
subgroup generated by $B$ and $s_i$. The group $B^k$ acts on $P_{h_1} \times
\cdots \times P_{h_k}$ on the right by
\begin{equation}
  \label{borelaction} (p_1, \cdots, p_k) (b_1, \cdots, b_k) = (p_1 b_1, b_1^{-
  1} p_2 b_2, \cdots, b_{r - 1} p_r b_r) .
\end{equation}
Then $Z_{\mathfrak{w}}$ is the quotient variety. The multiplication map
$P_{h_1} \times \cdots \times P_{h_k} \longrightarrow G$ induces a rational
map $Z_{\mathfrak{w}} \longrightarrow X_w$ that is a birational equivalence.
Although $X_w$ may be singular $Z_{\mathfrak{w}}$ is always smooth, so this
gives a resolution of the singularities of~$X_w$.

If $s$ is an ascent of $w$ with respect to the Bruhat order then we have a
partial Schubert variety $Z_{s, w}$ which is the quotient
$(P \times X_w) / B$ where if $s = s_i$ then $P = P_i$ and $B$ acts on the
right by $(p, x) \cdot b = (pb^{- 1}, bx)$. It is birationally equivalent to
$X_{sw}$.

We are interested in the map $\mu : Z_{s, w} \longrightarrow
X_{sw}$ from the partial Bott-Samelson variety to the Schubert variety. The
fiber over an open Schubert cell $Y_u$ (with $u \leqslant sw$) is either a
single point or a $\mathbb{P}^1$, and we will find a combinatorial criterion
for these cases.

Let $T$ be a maximal torus of $G$ contained in $B$.  Since the fibers of $\mu$
are constant on Schubert cells $Y_t \subset X_{sw}$ with $t \in W$, it
suffices to study the fiber $\mu^{- 1} (y_t)$, where $y_t \in Y_t^T$ is the
unique $T$-fixed point in the Schubert cell $Y_t$. Since the fiber $\mu^{- 1}
(y_t)$ is either a single point or has dimension 1, it is determined by its
Euler characteristic $\chi (\mu^{- 1} (y_t))$, and this is what we will
compute.

\begin{lemma}
  \label{fixedpointseuler}Let $Y$ be a projective complex algebraic variety
  with a $T$ action whose fixed point set $Y^T$ consists of isolated points.
  Then $\chi (Y) =\#Y^T$.
\end{lemma}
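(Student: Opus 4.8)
The plan is to compute $\chi(Y)$ using the additivity of the compactly supported Euler characteristic $\chi_c$ together with the vanishing of the Euler characteristic for fixed-point-free torus actions. Since $Y$ is projective, $\chi(Y)=\chi_c(Y)$. Writing $V=Y\setminus Y^T$, which is a $T$-stable locally closed subvariety, the additivity of $\chi_c$ under the decomposition of $Y$ into the closed subset $Y^T$ and its open complement gives
\[ \chi(Y)=\chi_c(Y^T)+\chi_c(V). \]
Because $Y^T$ is a finite set of points, $\chi_c(Y^T)=\#Y^T$, so the lemma reduces to the single claim that $\chi_c(V)=0$, i.e.\ that a torus acting without fixed points on a complex variety has vanishing Euler characteristic.

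First I would reduce the torus $T$ to a one-dimensional subtorus. Choosing a cocharacter $\gamma\colon\mathbb{C}^{\ast}\to T$ in general position, one has $Y^{\gamma(\mathbb{C}^{\ast})}=Y^T$: the cocharacters for which this equality fails are confined to a finite union of proper subtori coming from the isotropy data of the action (for projective $Y$ this is the standard Bia{\l}ynicki-Birula type genericity statement), so a generic $\gamma$ works. In particular the induced $\mathbb{C}^{\ast}$-action on $V$ is again fixed-point-free.

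Next I would establish $\chi_c(V)=0$ for this fixed-point-free $\mathbb{C}^{\ast}$-action. Here I would stratify $V$ into finitely many $\mathbb{C}^{\ast}$-stable locally closed subvarieties on each of which the orbit type is locally constant, so that each stratum is, at worst \'etale-locally, a fiber bundle over its orbit space whose fiber is a single $\mathbb{C}^{\ast}$-orbit. Since the action has no fixed points, every such orbit is isomorphic to $\mathbb{C}^{\ast}$ or to a finite quotient $\mathbb{C}^{\ast}/\mu_n\cong\mathbb{C}^{\ast}$, and in either case its Euler characteristic vanishes. By multiplicativity of $\chi_c$ in fiber bundles each stratum contributes $0$, and by additivity $\chi_c(V)=0$. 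Combining with the display above yields $\chi(Y)=\#Y^T$.

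The main obstacle is the vanishing step: setting up the stratification of the fixed-point-free action cleanly enough that multiplicativity of $\chi_c$ applies, and keeping track of the finite isotropy quotients. This is where the real care is needed, although it is a standard fact; equivalently, one may simply invoke the known result that $\chi(V)=\chi(V^{\mathbb{C}^{\ast}})$ for an arbitrary $\mathbb{C}^{\ast}$-variety $V$, applied here with $V^{\mathbb{C}^{\ast}}=\emptyset$. By comparison, the reduction from $T$ to $\mathbb{C}^{\ast}$ and the identification $\chi_c=\chi$ for complex algebraic varieties are routine.
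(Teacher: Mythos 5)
Your proof is correct, and after the first step it diverges from the paper's argument. Both proofs begin identically, by replacing the $T$-action with the action of a generic one-parameter subgroup $\mathbb{C}^{\times}\to T$ having the same fixed-point set. From there the paper invokes the Bia{\l}ynicki-Birula decomposition of $Y$ into the attracting sets $U_x=\{z: \lim_{\varepsilon\to 0}\varepsilon\cdot z=x\}$, one for each fixed point, and concludes that $\chi(Y)$ equals the number of cells because the cells have even real dimension. You instead decompose $Y$ as $Y^T\sqcup(Y\setminus Y^T)$, use additivity of $\chi_c$, and reduce everything to the vanishing $\chi_c(V)=0$ for a fixed-point-free $\mathbb{C}^{\times}$-action, proved by stratifying by orbit type and using $\chi(\mathbb{C}^{\times})=0$ (equivalently, the localization statement $\chi(V)=\chi(V^{\mathbb{C}^{\times}})$). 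The trade-off: the paper's route is shorter on the page but leans on the full strength of the Bia{\l}ynicki-Birula theorem, whose clean cellular form requires smoothness of $Y$ (for singular $Y$ the attracting sets are merely locally closed, and showing each contributes $1$ to $\chi$ amounts to exactly your additivity-plus-vanishing argument applied piece by piece); it also yields more when it applies, namely the Betti numbers, not just $\chi$. Your route needs only additivity of $\chi_c$ and the fixed-point-free vanishing, so it covers singular $Y$ without further comment and does not even require the limits $\lim_{\varepsilon\to 0}\varepsilon\cdot z$ to exist. In the paper's application the fibers are a point or $\mathbb{P}^1$, so both arguments apply; your version is the more robust one. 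The only soft spot is the stratification step, where local triviality of the orbit map and the finite isotropy quotients are asserted rather than checked, but as you note this is a standard fact and citing the localization theorem for $\chi$ is a legitimate way to close it.
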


\begin{proof}
  Choosing a regular element $\lambda$ of $\tmop{Hom} ( \mathbb{C}^{\times},
  T)$, it follows that $Y$ has a $\mathbb{C}^{\times}$ action with the same
  fixed point set, that is, $Y^{\mathbb{C}^{\times}} = Y^T$. This $\mathbb{C}^{\times}$
  action defines a Bialynicki-Birula cellular decomposition of $Y$, with cells
  $\{U_x \}_{x \in Y^{\mathbb{C}^{\times}}}$ defined by
  \[ U_x =\{z \in Y \mid \lim_{\varepsilon \to 0} \varepsilon \cdot z = x\},
  \]
  one cell for each $x$. See Bialynicki-Birula, Carrell and McGovern
  {\cite{BialynickiBirula}}. Since the cells are all of even real dimension,
  the Euler characteristic of $Y$ is simply the number of cells - that is, the
  number of fixed points.
\end{proof}

\begin{proposition}
  \label{fiberdet}The fiber of $\mu$ over $Y_u$ is $\mathbb{P}^1$ if and only
  if both $u, su \leqslant w$, and is a point otherwise.
\end{proposition}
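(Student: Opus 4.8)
The plan is to compute the fiber $\mu^{-1}(y_u)$ directly from the definition of $Z_{s,w}=(P\times X_w)/B$, where $P=P_i$ and $s=s_i$. A point of $Z_{s,w}$ is an equivalence class $[p,x]$ with $p\in P$ and $x\in X_w$, and the map $\mu$ sends $[p,x]\mapsto px\in X_{sw}$. Since the fibers are constant along Schubert cells, by the discussion preceding the statement it suffices to compute the fiber over the $T$-fixed point $y_u\in Y_u$ for each $u\leqslant sw$, and by Lemma~\ref{fixedpointseuler} the fiber is a $\mathbb{P}^1$ precisely when it has more than one $T$-fixed point (equivalently Euler characteristic $>1$), and a point otherwise.

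First I would identify the $T$-fixed points in the fiber. The torus $T$ acts on $Z_{s,w}$ through the left factor $P$, and the $T$-fixed points of $X=G/B$ are the $\{y_t\}_{t\in W}$. The fiber bundle $P/B\cong\mathbb{P}^1$ has exactly two $T$-fixed points, corresponding to the two cosets $B$ and $sB$ in $P/B$. Concretely, a $T$-fixed point in $\mu^{-1}(y_u)$ has the form $[\dot t, y_x]$ where $\dot t\in\{1,s\}$ represents a fixed point of $P/B$ and $y_x\in X_w^T$, subject to the constraint that the product lands at $y_u$, i.e.\ $t\cdot x=u$ with $x\leqslant w$. This gives the two candidate fixed points $[1,y_u]$ (requiring $u\leqslant w$) and $[s,y_{su}]$ (requiring $su\leqslant w$).

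Next I would count these candidates against the hypotheses. The fiber is a single point exactly when precisely one of the two candidates is an honest point of $Z_{s,w}$, and it is $\mathbb{P}^1$ exactly when both are. The condition that $[1,y_u]$ is a valid point is $u\leqslant w$; the condition that $[s,y_{su}]$ is valid is $su\leqslant w$. Hence the fiber contains two $T$-fixed points, and so by Lemma~\ref{fixedpointseuler} is $\mathbb{P}^1$, if and only if both $u\leqslant w$ and $su\leqslant w$; otherwise it contains one fixed point and is a single point. This matches the claimed criterion exactly. I would also need to observe that whenever $u\leqslant sw$ at least one of $u,su$ is $\leqslant w$, so the fiber is always nonempty and of the expected dimension; this follows from the lifting property for Schubert cells under the projection $X_{sw}\to P_i\backslash G/B$, i.e.\ from Proposition~\ref{deodchar}.

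The \textbf{main obstacle} will be the careful bookkeeping relating $T$-fixed points of the quotient $(P\times X_w)/B$ to pairs $(t,x)$, in particular verifying that every $T$-fixed point in a fiber is of the stated form and that the two candidates are distinct points of $Z_{s,w}$ when both constraints hold. One must check that the $B$-action does not collapse $[1,y_u]$ and $[s,y_{su}]$, which amounts to noting that $y_u$ and $y_{su}$ sit in the two distinct $T$-fixed points of the $\mathbb{P}^1$-fiber $P/B$ over $\mu(y_u)$. Once the fixed-point dictionary is set up correctly, the equivalence with the combinatorial condition ``both $u,su\leqslant w$'' is immediate, and the Euler characteristic argument of Lemma~\ref{fixedpointseuler} converts the fixed-point count into the geometric dichotomy.
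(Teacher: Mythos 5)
Your proposal is correct and follows essentially the same route as the paper: reduce to the $T$-fixed point $y_u$, use Lemma~\ref{fixedpointseuler} to convert the point-versus-$\mathbb{P}^1$ dichotomy into a count of $T$-fixed points in the fiber, and identify those fixed points with the candidates $(1,u)$ and $(s,su)$ subject to $u\leqslant w$ and $su\leqslant w$ respectively. The only cosmetic difference is that you derive the description of $Z_{s,w}^T$ and $\mu^T$ directly from the quotient construction, whereas the paper cites these standard facts (e.g.\ to Brion).
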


\begin{proof}
  In view of Lemma~\ref{fixedpointseuler}, in order to compute $\chi (\mu^{-
  1} (y_t))$, we need to compute the number of fixed points in the set $\mu^{-
  1} (y_t)^T$. This is straightforward since the fixed point set $Z_{s, w}^T$
  equals $\{(u, t) \mid u \in \left<s\right>, \; t \leq
  w\}$ and the map $\mu^T : Z_{s, w}^T \longrightarrow X_{sw}^T$ is
  multiplication $(u, t) \mapsto ut$.  Discussions of these facts may be found
  in many places, usually for ``standard'' Bott-Samelson varieties rather than
  these partial ones. See, for example Brion~{\cite{Brion}}.
  
  Now, from these two facts, we compute $(\mu^T)^{- 1} (y_t)$ for $t \leqslant
  w$. In general, we have
  \[ (\mu^T)^{- 1} (y_t) =\{(u, x) \,|\, \text{$u \in  \left<s\right>$, $x
     \leqslant w$ and $ux = t$}\}. \]
  Since $\left<s\right>$ has order two, there are
  at most two points in $(\mu^T)^{- 1} (y_t)$. One of them is the point
  $y_{(1, t)}$, which is the image of the affine point $(1, t)$ in
  $\mathbb{P}^1$. The other possibility is the $y_{(s, s t)}$; but this point
  is only a point of $Z_{s, w}^T$ if $st \leqslant w$. Thus we conclude that
  $(\mu^T)^{- 1} (y_t)$ is in bijection with the elements $z$ such that $t$
  and $st$ are less than or equal to $w$.
  
  The map $\mu$ is an isomorphism over the big cell $Y_{sw}$. Thus it remains
  to study the fibers over the cells $Y_u$ with $u \leqslant w$.
  
  It now suffices to to show that the fiber over $u$ is a $\mathbb{P}^1$ if
  and only if both $u, su \leqslant w$, and is a point otherwise. Thus we must
  show that the Euler characteristic of $\mu^{- 1} (y_u)$ is equal to 2 if and
  only if both $u, su \leqslant w$, and is equal to one otherwise. By
  Lemma~\ref{fixedpointseuler}, we must show that the cardinality of
  $(\mu^T)^{- 1} (y_u)$ is equal to 2 if and only if both $u, su \leqslant w$,
  and is equal to 1 otherwise. But, as explained above, $(\mu^T)^{- 1} (y_u)$
  is the one element set $y_{(1, u)}$ unless both $u, su \leqslant w$, in
  which case $(\mu^T)^{- 1} (y_u)$ is the 2 element set $\{y_{(1, u)}, y_{(s,
  su)} \}$.
\end{proof}

Let us reformulate this proposition using the notation (\ref{suggested}) from
the introduction. Assuming that $sw > w$, define
\[ H (w, s) =\{u \in W|u, su \leqslant w\}. \]
As in Proposition~\ref{cofinal} the set $H (w, s)$ has the property that if $u
\in H (w, s)$ and $t \leqslant u$ then $t \in H (w, s)$. Define
\[ c_{w, s} (u) = \sum_{\tmscript{\begin{array}{c}
     t \in H (w, s)\\
     t \geqslant u
   \end{array}}} (- 1)^{l (t) - l (u)} . \]
\begin{proposition}
  \label{fibersymbolic}Let $s$ be a left ascent of $w$. Then
  \[ \{y \in X_{sw}\,|\, \text{\rm $\mu^{- 1} (y)$ is nontrivial} \}= \sum_{u \in H
     (w, s)} c_{w, s} (u) X_u . \]
\end{proposition}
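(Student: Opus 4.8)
The plan is to combine the geometric fiber computation in Proposition~\ref{fiberdet} with a purely combinatorial M\"obius inversion on the Bruhat order, in the same spirit as the proof of Theorem~\ref{magictheorem}. First I would use Proposition~\ref{fiberdet} to identify the locus where the fiber is nontrivial. Since the fiber of $\mu$ over a point of the cell $Y_u$ is a $\mathbb{P}^1$ precisely when $u, su \leqslant w$ and a single point otherwise, the subset $S = \{y \in X_{sw} \mid \mu^{-1}(y)\text{ is nontrivial}\}$ is exactly the disjoint union of Schubert cells
\[ S = \bigcup_{u \in H(w, s)} Y_u . \]

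Next I would unwind the symbolic notation (\ref{suggested}). Every $y \in X$ lies in a unique cell $Y_r$, and for that point we have $y \in X_u$ if and only if $r \leqslant u$, while $y \in S$ if and only if $r \in H(w, s)$. Hence the asserted identity $S = \sum_u c_{w, s}(u) X_u$ is equivalent to the combinatorial statement that, for every $r \in W$,
\[ \sum_{u \geqslant r} c_{w, s}(u) = \begin{cases} 1 & \text{if } r \in H(w, s), \\ 0 & \text{otherwise.} \end{cases} \]

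Finally I would establish this identity by M\"obius inversion. Inserting the definition of $c_{w, s}$ and interchanging the order of summation gives
\[ \sum_{u \geqslant r} c_{w, s}(u) = \sum_{\substack{t \in H(w, s) \\ t \geqslant r}} \ \sum_{r \leqslant u \leqslant t} (-1)^{l(t) - l(u)} . \]
By the Verma--Deodhar formula (\cite{Verma}, \cite{Deodhar}) the M\"obius function of the Bruhat order is $\mu(u, t) = (-1)^{l(t) - l(u)}$, so the inner sum collapses to the indicator of $r = t$; summing over the remaining index then leaves $1$ when $r \in H(w, s)$ and $0$ otherwise, which is exactly what is required. The downward-closure of $H(w, s)$ noted before the proposition is what guarantees that $c_{w, s}$ vanishes off $H(w, s)$, matching the support statement in Theorem~\ref{thmain}.

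I do not expect a serious obstacle here: once the reduction is in place the computation is routine. The only point demanding care is the bookkeeping in the reduction itself---correctly translating ``$y \in X_u$'' into ``$r \leqslant u$'' for the unique cell $Y_r$ containing $y$, and invoking the M\"obius function of the Bruhat order in the correct variable so that the inner interval sum telescopes as intended.
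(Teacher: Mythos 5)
Your proposal is correct and follows essentially the same route as the paper's proof: reduce to the cell $Y_t$ containing $y$ via Proposition~\ref{fiberdet}, then evaluate $\sum_{u\geqslant t}c_{w,s}(u)$ using the Verma--Deodhar M\"obius function of the Bruhat order. The paper simply cites M\"obius inversion where you write out the interchange of summation and the telescoping of the interval sum explicitly.
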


\begin{proof}
  Let $y \in X$. Let $t \in W$ such that $y \in Y_t$. Then
  \[ \sum_{\tmscript{\begin{array}{c}
       u \in H (w, s)\\
       y \in X_u
     \end{array}}} c_{w, s} (u) = \sum_{\tmscript{\begin{array}{c}
       u \in H (w, s)\\
       t \leqslant u
     \end{array}}} c_{w, s} (u) . \]
  It follows from M\"obius inversion (Verma~{\cite{Verma}} or
  Deodhar~{\cite{Deodhar}}) that given $y \in X$ that this is $1$ if $t \in H
  (w, s)$ and $0$ otherwise. Thus the statement follows from
  Proposition~\ref{fiberdet}.
\end{proof}

\section{Proof of Theorems~\ref{thmain} and~\ref{qone}}

To prove Theorem~\ref{thmain} we may specialize $v = q^{- 1}$ with $q$ the
cardinality of the residue field. Let $\theta : \hat{T} ( \mathbb{C})
\longrightarrow \hat{T} ( \mathbb{C})$ be the map that sends $\boldsymbol{z} \longmapsto
\boldsymbol{z}^{- 1}$. Then
\begin{equation}
  \label{delconj} \partial_i = \theta  \partial_i' \theta 
\end{equation}
Let $\lambda$ be a dominant weight of $\hat{G} ( \mathbb{C})$. Then $- w_0
\lambda$ is also a dominant weight, and (\ref{bigxdef}) may be written
\begin{equation}
  \label{xdefined} \boldsymbol{X}_w (\lambda) = \delta (a_{- w_0 \lambda})^{- 1
  / 2} \theta \mathcal{W}_\tau\tilde{\Phi}^\tau_{ww_0} (a_{- w_0 \lambda}) ,
\end{equation}
where $\tau=\tau_{\boldsymbol{z}}$. Similarly
\begin{equation}
  \label{ydefined} \boldsymbol{Y}_w (\lambda) = \delta (a_{- w_0 \lambda})^{- 1
  / 2} \theta \mathcal{W}_\tau \Phi_{ww_0}^\tau (a_{- w_0 \lambda}) .
\end{equation}
We have $u \in H (s, w)$ if and only if $uw_0 \in H' (s, ww_0)$. Note that
$sww_0 < ww_0$ so that $H' (s, ww_0)$ is defined, and it is also easy to see
that $c_{w, s} (u) = c_{ww_0, s}' (uw_0)$.

\begin{proposition}
  \label{correctionterms}Let $\lambda$ be a dominant weight. Then
  \begin{equation}
  \label{wehave}
  \boldsymbol{X}_1 (\lambda) =\boldsymbol{Y}_1 (\lambda)
     =\boldsymbol{z}^{\lambda} .
  \end{equation}
  Assume that $s = s_{\alpha}$ is a simple reflection and that $sw > w$. Then
  \begin{equation}
    \label{schubid} \boldsymbol{X}_{sw} (\lambda) = (1 - q^{- 1} \boldsymbol{z}^{-
    \alpha}) \partial_{\alpha} \boldsymbol{X}_w (\lambda) - q^{- 1}
    \sum_{\tmscript{\begin{array}{c}
      u \in H (w, s)
    \end{array}}} c_{w, s} (u)\boldsymbol{X}_u (\lambda) .
  \end{equation}
\end{proposition}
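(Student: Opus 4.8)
The plan is to translate the already-established recursion for Whittaker functions, namely Theorem~\ref{magictheorem}, into the $\boldsymbol{X}_w$ variables via the definitions (\ref{xdefined}) and (\ref{ydefined}), and then account for the effect of the substitutions $\boldsymbol{z}\mapsto\boldsymbol{z}^{-1}$ (the map $\theta$), the passage from $w$ to $ww_0$, and the corresponding reversal of the Bruhat order.

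First I would dispose of (\ref{wehave}). By (\ref{xdefined}) with $w=1$ we have $\boldsymbol{X}_1(\lambda)=\delta(a_{-w_0\lambda})^{-1/2}\theta\,\mathcal{W}_\tau\tilde\Phi^\tau_{w_0}(a_{-w_0\lambda})$, and since $w_0$ is maximal, $\tilde\Phi_{w_0}=\Phi_{w_0}$, so $\boldsymbol{X}_1=\boldsymbol{Y}_1$. Applying Proposition~\ref{wflongev} with the dominant weight $-w_0\lambda$ gives $\mathcal{W}_\tau\Phi^\tau_{w_0}(a_{-w_0\lambda})=\delta^{1/2}(a_{-w_0\lambda})\boldsymbol{z}^{w_0(-w_0\lambda)}=\delta^{1/2}(a_{-w_0\lambda})\boldsymbol{z}^{-\lambda}$. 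Applying $\theta$ sends $\boldsymbol{z}^{-\lambda}$ to $\boldsymbol{z}^{\lambda}$, which yields (\ref{wehave}).

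For the recursion (\ref{schubid}), the key step is to feed the hypothesis $sw>w$ through the order-reversing bijection $w\mapsto ww_0$. Since $sw>w$ is equivalent to $s(ww_0)<ww_0$, Theorem~\ref{magictheorem} applies with the pair $(s,ww_0)$ in place of $(s,w)$, giving a formula for $\mathcal{W}_\tau\tilde\Phi^\tau_{s(ww_0)}=\mathcal{W}_\tau\tilde\Phi^\tau_{(sw)w_0}$ in terms of $\mathcal{W}_\tau\tilde\Phi^\tau_{ww_0}$ and a correction sum over $H'(ww_0,s)$. I would then multiply through by $\delta(a_{-w_0\lambda})^{-1/2}$, apply $\theta$, and use the conjugation identity (\ref{delconj}), $\partial_\alpha=\theta\partial'_\alpha\theta$, to convert the primed Demazure operator into the unprimed one. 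The factor $(1-q^{-1}\boldsymbol{z}^\alpha)$ appearing in (\ref{firstmagic}) becomes $(1-q^{-1}\boldsymbol{z}^{-\alpha})$ after $\theta$, matching (\ref{schubid}); here one must check that $\theta$ commutes past the scalar $\delta(a_{-w_0\lambda})^{-1/2}$, which it does since that factor is independent of $\boldsymbol{z}$.

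The main obstacle, and the step requiring the most care, is bookkeeping the correction term. The sum in (\ref{firstmagic}) runs over $u\in H'(ww_0,s)$ and involves $\mathcal{W}_\tau\Phi^\tau_u$; I must re-index this using the stated correspondence $u\in H(w,s)\iff uw_0\in H'(ww_0,s)$ together with the coefficient identity $c_{w,s}(u)=c'_{ww_0,s}(uw_0)$ recorded just before the Proposition. Writing $u'=uw_0$, the term $\mathcal{W}_\tau\Phi^\tau_{u'}(a_{-w_0\lambda})$ becomes, after the normalization and $\theta$, precisely $\boldsymbol{Y}_u(\lambda)$ by (\ref{ydefined}). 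I would prefer to work from the M\"obius-inverted form (\ref{secondmagic}), so that the correction sum is expressed through $\tilde\Phi$'s with coefficients $c'_{ww_0,s}$; each such term then normalizes to $\boldsymbol{X}_u(\lambda)$ via (\ref{xdefined}), and the index substitution $u'=uw_0$ together with $c_{w,s}(u)=c'_{ww_0,s}(uw_0)$ turns the sum over $H'(ww_0,s)$ into the sum over $H(w,s)$ with coefficients $c_{w,s}(u)$ exactly as displayed in (\ref{schubid}). The remaining verification is purely formal: confirming that the $q^{-1}$ prefactor survives unchanged under $\theta$ and the normalization, which is immediate since $q^{-1}$ is a scalar.
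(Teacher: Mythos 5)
Your proposal is correct and follows essentially the same route as the paper: start from the recursion (\ref{secondmagic}) with $w$ replaced by $ww_0$, apply $\theta$ together with the conjugation identity (\ref{delconj}), evaluate at $a_{-w_0\lambda}$, normalize by the constant $\delta(a_{-w_0\lambda})^{-1/2}$, and re-index the correction sum via $u\in H(w,s)\iff uw_0\in H'(ww_0,s)$ and $c_{w,s}(u)=c'_{ww_0,s}(uw_0)$. The only difference is that you spell out the verification of (\ref{wehave}) from Proposition~\ref{wflongev}, which the paper leaves implicit.
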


\begin{proof}
  Equation~(\ref{wehave}) follows from Proposition~\ref{wflongev}.
  To prove (\ref{schubid})
  we begin with (\ref{secondmagic}), with $w$ replaced by $ww_0$. Applying $\theta $
  and using (\ref{delconj}) we obtain
  \[ \theta \mathcal{W}_{\tau} \tilde{\Phi}^{\tau}_{sww_0} = (1 - q^{- 1} \boldsymbol{z}^{-
     \alpha}) \partial_{\alpha} \theta \mathcal{W}_{\tau} \tilde{\Phi}^{\tau}_{ww_0} - q^{- 1}
     \sum_{uw_0 \in H' (ww_0, s)} c'_{ww_0, s} (uw_0) \theta \mathcal{W}_{\tau}
     \tilde{\Phi}^{\tau}_u . \]
  Now evaluating this at $a_{- w_0 \lambda}$ and multiplying by the constant
  $\delta (a_{- w_0 \lambda})^{- 1 / 2}$ (which depends on $\lambda$ but not
  $\boldsymbol{z}$) we obtain (\ref{schubid}).
\end{proof}

Combining Propositions~\ref{fibersymbolic} and~\ref{correctionterms} gives
Theorem~\ref{thmain}.

To prove Theorem~\ref{qone}, we will take $q = 1$. Then
the operators $\mathfrak{D}_i$ simplify to
\[ \mathfrak{D}_i f ( \boldsymbol{z}) = f (z) - \boldsymbol{z}^{- \alpha_i} f (s_i
   \boldsymbol{z}) . \]
These operators do not satisfy the braid relation. We show that if we define
$\widehat{\boldsymbol{X}}_w ( \boldsymbol{z})$ to be the right-hand side of
(\ref{permutahedron}), and if $s = s_i$ with $l (sw) > l (w)$, then
\begin{equation}
  \label{xprimerecursion} \widehat{\boldsymbol{X}}_{sw} = \mathfrak{D}_i
  \widehat{\boldsymbol{X}}_w - \sum_{u \in H (w, s)} c (u) \widehat{\boldsymbol{X}}_u,
\end{equation}
where $c (u)$ is as in Theorem~\ref{thmain}. The dominant weight $\lambda$
will be fixed and we suppress it from the notation. By comparison with 
Theorem~\ref{thmain}, $\boldsymbol{X}_w$ (with $q = 1$) and
$\widehat{\boldsymbol{X}}_w$ both satisfy the same recursion, and agree when $w =
1$; therefore by induction on the Bruhat order, they are equal.

Let $\widehat{\boldsymbol{Y}}_w = \boldsymbol{z}^{w (\rho + \lambda) -
\rho}$. The identity (\ref{xprimerecursion}) is equivalent by M\"obius
inversion to
\begin{equation}
  \label{equivdc} \mathfrak{D}_i \widehat{\boldsymbol{X}}_w - \sum_{u \in H (w, s)}
  \widehat{\boldsymbol{Y}}_u,
\end{equation}
Since $\boldsymbol{z}^{- \alpha} \boldsymbol{z}^{- s \rho} = \boldsymbol{z}^{- s
\rho}$, we have
\[ \mathfrak{D}_i \widehat{\boldsymbol{Y}}_u =
   \widehat{\boldsymbol{Y}}_u - \widehat{\boldsymbol{Y}}_{su} .
\]
Therefore
\begin{equation}
  \label{possdc} \mathfrak{D}_i \widehat{\boldsymbol{X}}_w = \sum_{u \leqslant w} (-
  1)^{l (u)} \widehat{\boldsymbol{Y}}_u + \sum_{su \leqslant w} (-
  1)^{l (u)} \widehat{\boldsymbol{Y}}_u,
\end{equation}
where in the second term we have replaced $u$ by $su$.

By Proposition~\ref{deodchar} (ii), we have $u \leqslant sw$ if and only $\min
(u, su) \leqslant w$ where the notation $\min (u, su)$ makes sense since $u$
and $su$ are always comparable in the Bruhat order. So
\[ \{u|u \leqslant sw\}=\{u|u \leqslant w\} \cup \{u|su \leqslant w\} \]
and we may write
\[ \widehat{\boldsymbol{X}}_{sw} = \sum_{u \leqslant w} (- 1)^{l (u)}
   \widehat{\boldsymbol{Y}}_u + \sum_{su \leqslant w} (- 1)^{l (u)}
   \widehat{\boldsymbol{Y}}_u - \sum_{u, su \leqslant w} (- 1)^{l (u)}
   \widehat{\boldsymbol{\boldsymbol{Y}}}_u \]
where we have subtracted the terms that are double counted in the first two
sums. Now using (\ref{possdc}) we obtain (\ref{equivdc}).

\section{\label{HeckeAlgebras}Hecke algebras}

\,The space of Iwahori fixed vectors $M (\tau)^J$ for a fixed unramified
character $\tau$ is isomorphic to the finite-dimensional Hecke algebra
$\mathcal{H}$ of compactly supported $J$-bi-invariant functions on $G (F)$
which have support inside of $G (\mathfrak{o})$. We recall the isomorphism,
following Bump and Nakasuji~{\cite{BumpNakasuji}}, who in turn
follow Rogawski~{\cite{Rogawski}}.

In this isomorphism, $f \in M (\tau)^J$ corresponds to the element
$\varrho_{\tau} (f) \in \mathcal{H}$ where
\[ \varrho_{\tau} (f) (g) = \left\{ \begin{array}{ll}
     f (g^{- 1}) & \text{if $g \in G (\mathfrak{o})$,}\\
     0 & \text{otherwise.}
   \end{array} \right. \]
Both $M (\tau)^J$ and $\mathcal{H}$ are left $\mathcal{H}$-modules, where the
action of $\mathcal{H}$ on $M (\tau)$ is
\[ \phi \cdot f (g) = \int_{G (\mathfrak{o})} \phi (x) f (g x) \, d x. \]
Then $\varrho_{\tau}$ is a homomorphism of left $\mathcal{H}$-modules.

The Hecke algebra $\mathcal{H}$ has the following description by generators
and relations, due to Iwahori and Matsumoto {\cite{IwahoriMatsumoto}}. If
$t_w$ is the characteristic function of the double coset $J w J$ then
$\mathcal{H}$ is generated by the $t_i = t_{s_i}$ where $s_i$ is a simple
reflection. The generators satisfy the quadratic relation
\[ t_i^2 = (q - 1) t_i + q, \]
together with the braid relations. Thus $t_i=t_{s_i}$.
The braid relations and the quadratic relations give a presentation of
$\mathcal{H}$ which is valid even if $q$ is not a prime.

Let $v$ be an indeterminate. Let $\mathcal{H}_v$ be the abstract algebra over
$\mathbb{C}[v,v^{-1}]$ generated by $T_1, \cdots, T_r$ subject to the braid
relations together with the condition that
\[ T_i^2 = (v - 1) T_i + v. \]
There is an antihomomorphism $\sigma:\mathcal{H}_v\to\mathcal{H}$ 
defined by $\sigma(v)=q^{-1}$ and $\sigma(T_w)=t_w^{-1}$. In particular
\begin{equation}
\label{sigmat}
\sigma(T_i+1)=q^{-1}(t_i+1).
\end{equation}
The operators $\mathfrak{T}_i=\mathfrak{D}_i - 1$ are closely related to
the \textit{Demazure-Lusztig operators} that were introduced in
Lusztig~\cite{LusztigK1} equation (8.1). These are the operators defined by
\[ \mathcal{L}_i (\boldsymbol{z}^{\lambda}) =\mathcal{L}_{i, v}
   (\boldsymbol{z}^{\lambda}) = \frac{\boldsymbol{z}^{\lambda} -\boldsymbol{z}^{s_i
   \lambda}}{\boldsymbol{z}^{\alpha_i} - 1} - v \frac{\boldsymbol{z}^{\lambda}
   -\boldsymbol{z}^{\alpha + s_i \lambda}}{\boldsymbol{z}^{\alpha_i} - 1} . \]
Lusztig shows that these satisfy the same relations as the $T_i$ in the
(finite) Hecke algebra, and we will also prove this in the equivalent
form of Proposition~\ref{braidrel} below. The precise relationship
between our operators and Lusztig's is as follows. Lusztig's operators
$\mathcal{L}_i$ satisfy the quadratic relation
\begin{equation}
\label{dlrel}
\mathcal{L}_i^2 = (v - 1)\mathcal{L}_i + v.
\end{equation}
Replacing $v$ by $v^{- 1}$ and multiplying by $- v$ let $\mathcal{L}_i' = -
v\mathcal{L}_{i, v^{- 1}}$. We have
\begin{equation}
\label{dlprel}
\mathcal{L}_i' \boldsymbol{z}^{\lambda} = \frac{- v\boldsymbol{z}^{\lambda}
   -\boldsymbol{z}^{\alpha_i + s_i \lambda} + v\boldsymbol{z}^{s_i \lambda}
   +\boldsymbol{z}^{\lambda}}{\boldsymbol{z}^{\alpha_i} - 1} .
\end{equation}
It follows from (\ref{dlrel}) that the $\mathcal{L}_i'$ satisfy the same braid
and quadratic relations as the $\mathcal{L}_i$ and hence there is a
representation of $\mathcal{H}_v$ in which $T_i \longrightarrow \mathcal{L}_i'$.
We recall that if $\lambda$ is a weight, then $\zeta^\lambda$ is the
operation on $\mathcal{O}(\hat{T})$ defined by multiplication by $\boldsymbol{z}^{-\lambda}$.
It follows easily from (\ref{dlprel}) that 
\begin{equation}
\label{usvsthem}
\zeta^{- \rho} (\mathfrak{D}_i - 1) \zeta^{\rho} =\mathcal{L}'_i .
\end{equation}
Thus the following proposition is equivalent to the result of~\cite{LusztigK1}.
Our proof is different, since we use Whittaker functions to prove
the braid relations.

\begin{proposition}
\dueto{Lusztig~\cite{LusztigK1}}
\label{braidrel}
  The operators $\mathfrak{T}_i =\mathfrak{D}_i - 1$ satisfy the braid
  relations and the quadratic relations of $\mathcal{H}_v$. Thus $T_i
  \longmapsto \mathfrak{T}_i$ is a representation of~$\mathcal{H}_v$.
\end{proposition}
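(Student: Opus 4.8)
The plan is to deduce these relations from the ones already established for the primed operators $\mathfrak{T}'_i$ in Propositions~\ref{whitbr} and~\ref{quadrel}, by conjugating with the inversion automorphism $\theta:\boldsymbol{z}\mapsto\boldsymbol{z}^{-1}$ and then promoting the residue-field parameter $q^{-1}$ to the formal indeterminate $v$. Recall that $\theta$ induces the algebra automorphism $\boldsymbol{z}^\mu\mapsto\boldsymbol{z}^{-\mu}$ of $\mathcal{O}(\hat{T})$, and $\theta^2=1$, so that $A\mapsto\theta A\theta$ is an involutive automorphism of the operator algebra; in particular it carries any braid or quadratic relation to the identical relation in the conjugated operators.

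First I would record the exact relationship between the two families of operators. Writing $M_f$ for multiplication by $f\in\mathcal{O}(\hat{T})$ and using $\theta M_f\theta=M_{f\circ\theta}$ together with $\partial_i=\theta\partial'_i\theta$ from~(\ref{delconj}), a short computation gives
\[
\mathfrak{D}_i = M_{1-v\boldsymbol{z}^{-\alpha_i}}\,\partial_i = \theta\,M_{1-v\boldsymbol{z}^{\alpha_i}}\,\partial'_i\,\theta = \theta\,\mathfrak{D}'_i\,\theta,
\]
where the middle primed operator is the one in~(\ref{frakdpdef}) but with $q^{-1}$ replaced throughout by $v$; the sign flip $\boldsymbol{z}^{-\alpha_i}\mapsto\boldsymbol{z}^{\alpha_i}$ is exactly the effect of conjugating the multiplier by $\theta$. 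Subtracting $1=\theta\cdot 1\cdot\theta$ yields $\mathfrak{T}_i=\theta\,\mathfrak{T}'_i\,\theta$. Consequently every relation satisfied by the $\mathfrak{T}'_i$ is satisfied verbatim by the $\mathfrak{T}_i$.

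It then remains only to check that the relations of Propositions~\ref{whitbr} and~\ref{quadrel} hold not merely at $v=q^{-1}$ but identically in the indeterminate $v$. For the quadratic relation this is immediate: the computation in Proposition~\ref{quadrel} uses only the formal identities $(\partial'_i)^2=\partial'_i$ and $\partial'_i\boldsymbol{z}^{\alpha}\partial'_i=-\partial'_i$, so it is valid with $q^{-1}$ replaced by any scalar, in particular by $v$, giving $(\mathfrak{T}'_i)^2=(v-1)\mathfrak{T}'_i+v$. For the braid relation I would invoke a specialization argument: applied to any monomial $\boldsymbol{z}^\lambda$, both sides of~(\ref{braidp}) are Laurent polynomials in $\boldsymbol{z}$ whose coefficients are polynomials in the parameter; by Proposition~\ref{whitbr} these agree whenever the parameter equals $q^{-1}$ for a $p$-adic field with residue field of size $q$, and as $q$ ranges over all prime powers this is infinitely many values, forcing equality of the coefficient polynomials and hence the braid identity in the formal variable $v$.

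The only genuinely delicate point is this last promotion from a fixed residue characteristic to the formal parameter, since Proposition~\ref{whitbr} was proved through the Whittaker-function interpretation, where $q$ must be an actual residue-field cardinality; everything else is bookkeeping with the automorphism $\theta$. (Alternatively one could re-run the Whittaker argument of Proposition~\ref{whitbr} directly for the unprimed operators, but the conjugation route reuses the work already done.) With the relations in hand for the indeterminate $v$, the assignment $T_i\mapsto\mathfrak{T}_i$ respects the defining braid and quadratic relations of $\mathcal{H}_v$, and therefore extends to a representation of $\mathcal{H}_v$ on $\mathbb{C}[v,v^{-1}]\otimes\mathcal{O}(\hat{T})$, as asserted.
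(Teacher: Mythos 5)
Your proof is correct and follows essentially the same route as the paper's: the paper's own argument for Proposition~\ref{braidrel} consists precisely of the observation that $\mathfrak{T}_i=\theta\,\mathfrak{T}'_i\,\theta$ combined with Propositions~\ref{whitbr} and~\ref{quadrel}. Your additional care in promoting the relations from the specialization $v=q^{-1}$ to the formal indeterminate $v$ (by noting the coefficients are polynomials in the parameter agreeing at infinitely many prime-power values) makes explicit a point the paper passes over in silence, but it does not change the approach.
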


\begin{proof}
The corresponding facts for $\mathfrak{T}'_i$, which is 
defined by (\ref{frakdpdef}), are Proposition~\ref{quadrel}
for the quadratic relation and Proposition~\ref{whitbr}
for the braid relation. Since
$\mathfrak{T}_i=\theta\mathfrak{T}_i'\theta$, the result
follows.
\end{proof}

We have now defined an action of $\mathcal{H}_v$ on
$\mathbb{C}[v,v^{-1}]\otimes \mathcal{O}(\hat{T})$, so Theorem~\ref{amazing}
now has meaning. In order to prove it we will have define certain
maps $\xi_\lambda:\mathcal{H}_v\to \mathbb{C}[v,v^{-1}]\otimes \mathcal{O}(\hat{T})$
and that is our next goal. These maps $\xi_\lambda$ also turn out to be the
key to extend the action to the affine Hecke algebra
$\widetilde{\mathcal{H}}_v$.

\begin{lemma}
  \label{bylemma}
  Let $s = s_{\alpha}$ be a simple reflection, and let $A^{\tau s}_s :
  \mathcal{H} \longrightarrow \mathcal{H}$ be the homomorphism of left
  $\mathcal{H}$-modules that makes the following diagram commutative:
\[\begin{CD}
M(\tau s)@>\mathcal{A}_s^{\tau s}>>M(\tau)\\
@VV\varrho_{\tau s}V @VV\varrho_{\tau}V\\
\mathcal{H}@>A_s^{\tau s}>> \mathcal{H}
\end{CD}\]
  Then $A_s^{\tau s}$ is right multiplication by
  \begin{equation}
    \label{righttrans} \frac{1}{q} t_i + \left( 1 - \frac{1}{q} \right)
    \frac{\boldsymbol{z}^{- \alpha}}{1 -\boldsymbol{z}^{- \alpha}},
  \end{equation}
  where $\tau = \tau_{\boldsymbol{z}}$.
\end{lemma}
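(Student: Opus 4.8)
The plan is to exploit the fact that $\mathcal{H}$ is free of rank one as a left module over itself, generated by the identity $t_e$ (the characteristic function of $J$). Since $\mathcal{A}_s^{\tau s}$ is a $G(F)$-intertwining map it commutes with the convolution action, so the diagonal map $A_s^{\tau s}=\varrho_\tau\circ\mathcal{A}_s^{\tau s}\circ\varrho_{\tau s}^{-1}$ is a homomorphism of left $\mathcal{H}$-modules. Consequently $A_s^{\tau s}(h)=A_s^{\tau s}(h\cdot t_e)=h\cdot A_s^{\tau s}(t_e)$, so $A_s^{\tau s}$ is automatically right multiplication by the single element $A_s^{\tau s}(t_e)$. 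The whole problem therefore reduces to evaluating $A_s^{\tau s}(t_e)=\varrho_\tau\bigl(\mathcal{A}_s^{\tau s}\,\varrho_{\tau s}^{-1}(t_e)\bigr)$ and checking that it equals~(\ref{righttrans}).

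First I would identify how $\varrho_\tau$ acts on the standard basis. Directly from the definition of $\varrho_\tau$ and~(\ref{phidef}), the function $\varrho_\tau(\Phi_w^\tau)$ is the characteristic function of $\{g\in G(\mathfrak{o}):g^{-1}\in BwJ\}$, taking the value $1$ there: writing $g^{-1}=bwk$ with $g^{-1}\in G(\mathfrak{o})$, $k\in J$ and $w$ chosen in $G(\mathfrak{o})$ forces $b\in B(\mathfrak{o})$, on which $\delta^{1/2}\tau\equiv 1$. Reducing modulo $\mathfrak{p}$ and using that $J$ is the preimage of $B(\mathbb{F}_q)$, this set is exactly the double coset $Jw^{-1}J$, so $\varrho_\tau(\Phi_w^\tau)=t_{w^{-1}}$. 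In particular $\varrho_{\tau s}(\Phi_1^{\tau s})=t_e$, hence $\varrho_{\tau s}^{-1}(t_e)=\Phi_1^{\tau s}$, while $\varrho_\tau(\Phi_1^\tau)=t_e$ and $\varrho_\tau(\Phi_s^\tau)=t_s=t_i$.

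Next I would compute $\mathcal{A}_s^{\tau s}\Phi_1^{\tau s}$ using Proposition~\ref{casselslick}. Since $s\cdot 1>1$ we are in the second case of~(\ref{secondid}) with $w=1$, which gives $\mathcal{A}_s^{\tau s}\Phi_1^{\tau s}+C_\alpha(\tau)\Phi_1^\tau=q^{-1}(\Phi_1^\tau+\Phi_s^\tau)$, so that
\[ \mathcal{A}_s^{\tau s}\Phi_1^{\tau s}=\bigl(q^{-1}-C_\alpha(\tau)\bigr)\Phi_1^\tau+q^{-1}\Phi_s^\tau. \]
A short manipulation of~(\ref{caltau}) shows $q^{-1}-C_\alpha(\tau)=(q^{-1}-1)/(1-\boldsymbol{z}^\alpha)=(1-q^{-1})\,\boldsymbol{z}^{-\alpha}/(1-\boldsymbol{z}^{-\alpha})$. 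Applying $\varrho_\tau$ and inserting the three identifications $\varrho_\tau(\Phi_1^\tau)=t_e$, $\varrho_\tau(\Phi_s^\tau)=t_i$ then yields $A_s^{\tau s}(t_e)=q^{-1}t_i+(1-q^{-1})\,\boldsymbol{z}^{-\alpha}/(1-\boldsymbol{z}^{-\alpha})$, which is precisely~(\ref{righttrans}).

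The two algebraic simplifications (the coefficient identity above and the rewriting $\boldsymbol{z}^{-\alpha}/(1-\boldsymbol{z}^{-\alpha})=-1/(1-\boldsymbol{z}^\alpha)$) are routine. The one genuinely content-bearing step is the coset identity $\varrho_\tau(\Phi_w^\tau)=t_{w^{-1}}$, and the place where care is required is tracking powers of $q$: the factor $\tfrac1q$ on the $t_i$ term comes entirely from the $q^{-1}$ in the second case of~(\ref{secondid}), since $\varrho_\tau$ itself contributes no power of $q$ (here the normalization making $t_e$ the module identity, compatible with $t_i^2=(q-1)t_i+q$, is what I would verify as a sanity check). Everything else is formal once the reduction to right multiplication by $A_s^{\tau s}(t_e)$ is in place.
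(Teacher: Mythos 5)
Your proof is correct and takes essentially the same route as the paper's: both reduce the problem to right multiplication by the image of the identity (since $\varrho$ intertwines the left $\mathcal{H}$-actions) and then compute $\mathcal{A}_s^{\tau s}\Phi_1^{\tau s}$ and apply $\varrho_\tau$, the only difference being that you derive the needed intertwining formula from Proposition~\ref{casselslick} where the paper cites Casselman's Theorem~3.4 directly. Your explicit verification that $\varrho_\tau(\Phi_w^\tau)=t_{w^{-1}}$ is a detail the paper leaves implicit, and it is correct.
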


\begin{proof}
  We recall that $\varrho_\tau$ is a homomorphism of left $\mathcal{H}$-modules.
  The composition $\varrho_{\tau} \circ \mathcal{A}_s^{\tau s} \circ
  \varrho_{\tau s}^{- 1}$ is thus a homomorphism of left $\mathcal{H}$-modules
  $\mathcal{H} \longrightarrow \mathcal{H}$ and since $\mathcal{H}$ is a ring
  it must consist of right multiplication by some element. The scalar may be
  evaluated by applying $\varrho_{\tau} \circ \mathcal{A}_s^{\tau s} \circ
  \varrho_{\tau s}$ to the unit element of $\mathcal{H}$, which corresponds
  under $\varrho^{- 1}$ to $\Phi_1^{\tau s}$. By Casselman~{\cite{Casselman}},
  Theorem 3.4 we have
  \[ \mathcal{A}_s^{\tau} \Phi^{\tau}_1 = \frac{1}{q} \Phi^{\tau s}_s + \left(
     1 - \frac{1}{q} \right) \frac{\boldsymbol{z}^{\alpha}}{1
     -\boldsymbol{z}^{\alpha}} \Phi^{\tau s}_1 \]
  so
  \[ \mathcal{A}_s^{\tau s} \varrho_{\tau s}^{- 1} (1) =\mathcal{A}_s^{\tau}
     \Phi^{\tau s}_1 = \frac{1}{q} \Phi^{\tau}_s + \left( 1 - \frac{1}{q}
     \right) \frac{\boldsymbol{z}^{- \alpha}}{1 -\boldsymbol{z}^{- \alpha}}
     \Phi^{\tau}_1.\]
  Applying $\rho_\tau$ gives
  \[\frac{1}{q} t_i + \left( 1 -
     \frac{1}{q} \right) \frac{\boldsymbol{z}^{- \alpha}}{1 -\boldsymbol{z}^{-
     \alpha}} . \]
  
\end{proof}

Let us fix a dominant weight $\lambda$ for $\hat{T}$. Then we will define a
map $\xi'_{\lambda} : \mathcal{H} \longrightarrow
\mathcal{O}( \hat{T})$, where $\mathcal{O}( \hat{T})$ is the ring of rational
functions on $\hat{T}$, which is the group algebra of the weight lattice
$\Lambda$ of $\hat{T}$. We
regard a rational function on $\hat{T}$ as a function on the complex points,
and to specify $\xi_{\lambda} (\phi)$ for $\phi \in \mathcal{H}$, it is enough
to specify its value $\xi_{\lambda} (\phi) (\boldsymbol{z})$ for $\boldsymbol{z}
\in \hat{T} (\mathbb{C})$. Let $\tau = \tau_{\boldsymbol{z}}$ and $\Phi =
\varrho_{\tau}^{- 1} (\phi) \in M (\tau)$. 
Define $\xi_{\lambda}' (\phi)$ by
\[ \xi'_{\lambda} (\phi) (\boldsymbol{z}) = \delta^{- 1 / 2} (a_{\lambda}) \,
   \mathcal{W}_{\tau} \Phi (a_{\lambda}) . \]

\begin{proposition}
  The following diagram is commutative for any dominant weight $\lambda$ of
  $\hat{T}$. Let $s = s_i$ be a simple reflection, and let $\alpha = \alpha_i$
  be the corresponding simple root.
  \begin{equation}
  \label{primehocd}
   \begin{CD}
   \mathcal{H} @>\xi'_{\lambda}>> \mathcal{O}( \hat{T})\\
      @VV\times q^{-1}(1+t_i)V @VV(1-q^{-1}\boldsymbol{z}^\alpha)\partial_\alpha' V\\
  \mathcal{H} @>\xi'_{\lambda}>> \mathcal{O}( \hat{T})
  \end{CD}
  \end{equation}
  where the left vertical arrow is right multiplication by $q^{-1} (1 + t_i)$. Moreover
  \begin{equation}
    \label{xprimeev} \xi'_{\lambda} (t_{w^{- 1}}) = \delta^{-1/2}(a_\lambda)\,\mathcal{W}_{\tau} \Phi^{\tau}_w
    (a_{\lambda}) .
  \end{equation}
\end{proposition}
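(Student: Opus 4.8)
The plan is to treat the two assertions separately, handling the ``Moreover'' identity (\ref{xprimeev}) first since it only unwinds definitions. By construction $\xi'_\lambda(\phi)(\boldsymbol{z}) = \delta^{-1/2}(a_\lambda)\,\mathcal{W}_\tau\Phi(a_\lambda)$ with $\Phi = \varrho_\tau^{-1}(\phi)$, so (\ref{xprimeev}) amounts to the identification $\varrho_\tau(\Phi_w^\tau) = t_{w^{-1}}$. I would check this from the definition: for $g\in G(\mathfrak{o})$ one has $\varrho_\tau(\Phi_w^\tau)(g) = \Phi_w^\tau(g^{-1})$, and using standard facts about the Iwahori decomposition (in particular $BwJ\cap K = JwJ$) together with the observation that any $b$ in a factorization $g^{-1}=bwj$ with $g^{-1}\in K$ lies in $B(\mathfrak{o})$, on which $\delta^{1/2}\tau$ is trivial, this value equals $1$ exactly when $g\in Jw^{-1}J$. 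Thus $\varrho_\tau(\Phi_w^\tau)=t_{w^{-1}}$ for every unramified $\tau$, which gives (\ref{xprimeev}) and, crucially, shows that $\varrho_\tau$ carries the basis $\{\Phi_w^\tau\}$ to $\{t_{w^{-1}}\}$ \emph{uniformly in} $\tau$.

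The commutativity of (\ref{primehocd}) is the real content, and I would deduce it from Proposition~\ref{demazuremi}, which is already the required identity on each basis vector. Writing a general $\phi\in\mathcal{H}$ as $\phi=\sum_w c_w t_{w^{-1}}$ with constant coefficients and setting $\Phi=\varrho_\tau^{-1}(\phi)=\sum_w c_w\Phi_w^\tau$ and $\Phi^{(s)}=\varrho_{\tau s}^{-1}(\phi)=\sum_w c_w\Phi_w^{\tau s}$ (using the uniform identification above, and $\tau s=\tau_{s_i\boldsymbol{z}}$), linearity of $\mathcal{A}_s^{\tau s}$, $\mathcal{W}_\tau$, the scalar $C_\alpha(\tau)$, and $\partial'_\alpha$ lets me sum Proposition~\ref{demazuremi} to obtain
\[ (1-q^{-1}\boldsymbol{z}^\alpha)\partial'_\alpha\,\mathcal{W}_\tau\Phi = \mathcal{W}_\tau\!\left(\mathcal{A}_s^{\tau s}\Phi^{(s)} + C_\alpha(\tau)\Phi\right). \]
Evaluating both sides at $a_\lambda$ and multiplying by the constant $\delta^{-1/2}(a_\lambda)$, which is independent of $\boldsymbol{z}$ and hence commutes with $\partial'_\alpha$ (as does evaluation at the fixed point $a_\lambda$), turns the left-hand side into $(1-q^{-1}\boldsymbol{z}^\alpha)\partial'_\alpha\,\xi'_\lambda(\phi)$, which is the right vertical arrow of (\ref{primehocd}) applied to $\xi'_\lambda(\phi)$.

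It then remains to recognize the right-hand side as $\xi'_\lambda$ of $\phi\cdot q^{-1}(1+t_i)$. Applying $\varrho_\tau$ and using that it is a homomorphism of left $\mathcal{H}$-modules, I would compute $\varrho_\tau(\mathcal{A}_s^{\tau s}\Phi^{(s)})=\phi\cdot m$ from Lemma~\ref{bylemma}, where $m=\tfrac1q t_i+(1-\tfrac1q)\tfrac{\boldsymbol{z}^{-\alpha}}{1-\boldsymbol{z}^{-\alpha}}$, while $\varrho_\tau(C_\alpha(\tau)\Phi)=\phi\cdot C_\alpha(\tau)$ since $C_\alpha(\tau)$ is a scalar. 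The decisive step is the scalar simplification
\[ m + C_\alpha(\tau) = \frac1q\,t_i + \left(1-\frac1q\right)\frac{\boldsymbol{z}^{-\alpha}}{1-\boldsymbol{z}^{-\alpha}} + \frac{1-q^{-1}\boldsymbol{z}^\alpha}{1-\boldsymbol{z}^\alpha} = q^{-1}(1+t_i), \]
which follows from $\tfrac{\boldsymbol{z}^{-\alpha}}{1-\boldsymbol{z}^{-\alpha}}=-\tfrac{1}{1-\boldsymbol{z}^\alpha}$ after collecting terms over the common denominator $1-\boldsymbol{z}^\alpha$. Hence $\varrho_\tau(\mathcal{A}_s^{\tau s}\Phi^{(s)}+C_\alpha(\tau)\Phi)=\phi\cdot q^{-1}(1+t_i)$, so the element in parentheses is $\varrho_\tau^{-1}(\phi\cdot q^{-1}(1+t_i))$; applying $\delta^{-1/2}(a_\lambda)\,\mathcal{W}_\tau(\cdot)(a_\lambda)$ then yields exactly $\xi'_\lambda(\phi\cdot q^{-1}(1+t_i))$, which is (\ref{primehocd}).

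I expect the main obstacle to be the bookkeeping between the two characters $\tau$ and $\tau s=\tau_{s_i\boldsymbol{z}}$: the operator $(1-q^{-1}\boldsymbol{z}^\alpha)\partial'_\alpha$ mixes the values of a function at $\boldsymbol{z}$ and $s_i\boldsymbol{z}$, and on the Hecke side this is mirrored by the intertwiner $\mathcal{A}_s^{\tau s}\colon M(\tau s)\to M(\tau)$ together with the $C_\alpha(\tau)$ correction term. Making this translation legitimate hinges on the uniformity $\varrho_\tau(\Phi_w^\tau)=t_{w^{-1}}$ across all $\tau$, so that $\Phi$ and $\Phi^{(s)}$ carry the same coordinates $c_w$; once that is in hand, everything reduces to the single scalar identity above.
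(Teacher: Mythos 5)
Your proposal is correct and follows essentially the same route as the paper: both rest on Proposition~\ref{demazuremi}, Lemma~\ref{bylemma}, and the scalar identity $\left(1-\tfrac1q\right)\tfrac{\boldsymbol{z}^{-\alpha}}{1-\boldsymbol{z}^{-\alpha}}+C_\alpha(\tau)=\tfrac1q$. The only differences are presentational: the paper checks the diagram directly on the basis elements $t_{w^{-1}}$ (which suffices by linearity) and takes the identification $\varrho_\tau^{-1}(t_{w^{-1}})=\Phi_w^\tau$ as immediate from the definitions, whereas you verify it explicitly and then sum over a general $\phi$.
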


\medbreak
\begin{proof}
  Let us consider both ways of traversing the commutative diagram
  (\ref{primehocd}) as applied
  to $t_{w^{- 1}}\in\mathcal{H}$. Then with
  $\tau = \tau_{\boldsymbol{z}}$ we have $\varrho_{\tau}^{- 1} t_{w^{- 1}} =
  \Phi^{\tau}_w$ and so we obtain (\ref{xprimeev}). Furthermore by
  Proposition~\ref{demazuremi}
  \[ \delta^{1 / 2} (a_{\lambda}) (1 - q^{- 1} \boldsymbol{z}^{\alpha})
     \partial'_{\alpha} \xi_{\lambda}' (t_{w^{- 1}}) = \mathcal{W}_{\tau}
     (\mathcal{A}_s^{\tau s} \Phi^{\tau s}_w + C_{\alpha} (\tau)
     \Phi^{\tau}_w) (a_{\lambda}) . \]
  By Lemma~\ref{bylemma},
  \[ \mathcal{A}_s^{\tau s} \Phi^{\tau s}_w =\mathcal{A}_s^{\tau s}
     \varrho_{\tau s}^{- 1} t_{w^{- 1}} = \varrho_{\tau}^{- 1} \left( t_{w^{-
     1}} \left( \frac{1}{q} t_i + (1 - q^{- 1}) \frac{\boldsymbol{z}^{-
     \alpha}}{1 -\boldsymbol{z}^{- \alpha}} \right) \right) . \]
  We have
  \[ \left( 1 - \frac{1}{q} \right) \frac{\boldsymbol{z}^{- \alpha}}{1
     -\boldsymbol{z}^{- \alpha}} + C_{\alpha} (\tau) = \frac{1}{q} . \]
  Therefore
  \[ \mathcal{A}_s^{\tau s} \Phi^{\tau s}_w + C_{\alpha} (\tau) \Phi^{\tau}_w
     = \varrho_{\tau}^{- 1} \left( t_{w^{- 1}} q^{- 1} (t_i + 1) \right) . \]
  Applying $\mathcal{W}_{\tau}$ and evaluating at $a_{\lambda}$, we see that
  \[ (1 - q^{- 1} \boldsymbol{z}^{\alpha}) \partial'_{\alpha} \xi_{\lambda}'
     (t_{w^{- 1}}) = \xi_{\lambda}' (t_{w^{- 1}} q^{- 1} (t_i + 1)), \]
  which is the required commutativity.
\end{proof}

If $\lambda$ is dominant, we will define $\xi_\lambda$ as
follows. By Theorem~\ref{qone}, if $\phi\in\mathcal{H}_v$ then
$\theta\xi'_{-w_0\lambda}(\sigma\phi)$ is a polynomial in $q^{-1}$, so there exists
an element $\xi_\lambda(\phi)$ of $\mathbb{C}[v,v^{-1}] \otimes
\mathcal{O}(\hat{T})$ that corresponds to it. Thus $\xi_{\lambda} :
\mathcal{H}_v \longrightarrow \mathbb{C}[v,v^{-1}] \otimes \mathcal{O}(
\hat{T})$ is the map such that
\[ \hbox{ev}\circ\xi_\lambda = \theta\circ\xi'_{-w_0\lambda}\circ\sigma\;.\]
where $\hbox{ev}:\mathbb{C}[v,v^{-1}]\otimes\mathcal{O}(\hat{T})\to \mathcal{O}(\hat{T})$
is the evaluation map that sends $v$ to~$q^{-1}$.

\begin{proposition}
  \label{commdomwt}Let $\lambda$ be a weight, and let $\alpha=\alpha_i$ be a simple
  root The following diagram is commutative:
  \begin{equation}
  \label{xicomdiag}
  \begin{CD}
     \mathcal{H}_v @>\xi_{\lambda}>> \mathbb{C}[v,v^{-1}]\otimes \mathcal{O}(\hat{T})\\
     @VV (1+T_i)\times\cdot V @VV\mathfrak{D}_i V\\
   \mathcal{H}_v @>\xi_{\lambda}>> \mathbb{C}[v,v^{-1}]\otimes \mathcal{O}(\hat{T})
  \end{CD}
  \end{equation}
  where the left vertical arrow is left multiplication by $1 + T_i$. 
  Moreover
  \begin{equation}
    \xi_{\lambda} (T_{(w w_0)^{- 1}}^{- 1}) =\boldsymbol{Y}_w (\lambda) .
    \label{xilambdat}
  \end{equation}
\end{proposition}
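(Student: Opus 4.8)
The plan is to deduce both assertions from their already-established primed counterparts, transporting them across the defining relation $\hbox{ev}\circ\xi_\lambda=\theta\circ\xi'_{-w_0\lambda}\circ\sigma$ by the two bookkeeping devices at hand: conjugation by $\theta$, which interchanges the operators $\mathfrak{D}_i$ and $(1-q^{-1}\boldsymbol{z}^\alpha)\partial'_\alpha$, and the fact that $\sigma$ is an \emph{antihomomorphism} sending $v\mapsto q^{-1}$, which interchanges left and right multiplication. Throughout I take $\lambda$ dominant, so that $-w_0\lambda$ is again dominant and $\xi'_{-w_0\lambda}$, together with (\ref{primehocd}) and (\ref{xprimeev}), is available. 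Since $\xi_\lambda(\phi)$ is by construction the element of $\mathbb{C}[v,v^{-1}]\otimes\mathcal{O}(\hat{T})$ whose $\hbox{ev}$-image is $\theta\xi'_{-w_0\lambda}(\sigma\phi)$, and since the relevant quantities are polynomial in $q^{-1}$ with $q$-independent coefficients by Theorem~\ref{wasthmone}, it suffices to verify every identity after applying $\hbox{ev}$.

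For the square I would first record the two translations. Conjugating the right vertical arrow $(1-q^{-1}\boldsymbol{z}^\alpha)\partial'_\alpha$ of (\ref{primehocd}) by $\theta$ produces $(1-q^{-1}\boldsymbol{z}^{-\alpha})\partial_\alpha$, using that conjugation by $\theta$ sends multiplication by $g(\boldsymbol{z})$ to multiplication by $g(\boldsymbol{z}^{-1})$, together with $\partial_\alpha=\theta\partial'_\alpha\theta$ from (\ref{delconj}); this is exactly $\hbox{ev}\,\mathfrak{D}_i$. On the algebra side, because $\sigma$ is an antihomomorphism, $\sigma((1+T_i)\phi)=\sigma(\phi)\,\sigma(1+T_i)=\sigma(\phi)\,q^{-1}(1+t_i)$ by (\ref{sigmat}), so left multiplication by $1+T_i$ on $\mathcal{H}_v$ becomes right multiplication by $q^{-1}(1+t_i)$ on $\mathcal{H}$. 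Applying $\hbox{ev}$ to the desired identity $\mathfrak{D}_i\,\xi_\lambda(\phi)=\xi_\lambda((1+T_i)\phi)$ and substituting these translations turns it into $(1-q^{-1}\boldsymbol{z}^{-\alpha})\partial_\alpha\,\theta\xi'_{-w_0\lambda}(\sigma\phi)=\theta\xi'_{-w_0\lambda}(\sigma(\phi)\,q^{-1}(1+t_i))$. Cancelling a factor of $\theta$ (an involution) from each side, this is precisely the commutativity of (\ref{primehocd}) for the dominant weight $-w_0\lambda$, applied to the element $\sigma\phi\in\mathcal{H}$, so the square commutes.

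For the evaluation formula (\ref{xilambdat}) I would compute $\hbox{ev}\,\xi_\lambda(T_{(ww_0)^{-1}}^{-1})=\theta\xi'_{-w_0\lambda}(\sigma(T_{(ww_0)^{-1}}^{-1}))$ directly. Since $\sigma$ is an antihomomorphism with $\sigma(T_x)=t_x^{-1}$, it sends inverses to inverses, whence $\sigma(T_{(ww_0)^{-1}}^{-1})=\sigma(T_{(ww_0)^{-1}})^{-1}=t_{(ww_0)^{-1}}$. Feeding this into (\ref{xprimeev}) with $w$ replaced by $ww_0$ and $\lambda$ by $-w_0\lambda$ gives $\delta^{-1/2}(a_{-w_0\lambda})\,\mathcal{W}_\tau\Phi^\tau_{ww_0}(a_{-w_0\lambda})$, and applying $\theta$, which commutes with the $\boldsymbol{z}$-independent constant $\delta^{-1/2}(a_{-w_0\lambda})$, yields exactly the right-hand side of the defining formula (\ref{ydefined}) for $\boldsymbol{Y}_w(\lambda)$. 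Lifting from $\hbox{ev}$ back to $\mathbb{C}[v,v^{-1}]\otimes\mathcal{O}(\hat{T})$ then gives (\ref{xilambdat}).

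The only genuinely delicate point is this last lift: an identity verified after the specialization $\hbox{ev}$ must be promoted to an identity of polynomials in $v$. This is where the $q$-independence of Theorem~\ref{wasthmone} is essential, for it guarantees that each side is a single polynomial in $v$ whose value at $v=q^{-1}$ is the computed Whittaker quantity; agreement of these specializations as $q$ ranges over the residue cardinalities therefore forces equality of the polynomials themselves. Everything else is the careful transport of (\ref{primehocd}) and (\ref{xprimeev}) through $\theta$ and through the antihomomorphism $\sigma$.
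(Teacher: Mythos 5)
Your proposal is correct and follows essentially the same route as the paper: the paper proves the square by splitting $\hbox{ev}\circ\xi_\lambda=\theta\circ\xi'_{-w_0\lambda}\circ\sigma$ into three commuting squares (the $\sigma$-square via (\ref{sigmat}) and the antihomomorphism property, the middle square being (\ref{primehocd}), and the $\theta$-square via $\theta\partial_\alpha\theta=\partial'_\alpha$), and derives (\ref{xilambdat}) from (\ref{xprimeev}) and the definition of $\boldsymbol{Y}_w(\lambda)$, exactly as you do. Your added remarks on lifting the identity from the specialization $v=q^{-1}$ back to $\mathbb{C}[v,v^{-1}]$ only make explicit what the paper builds into the definition of $\xi_\lambda$.
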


\begin{proof}
  We observe that (\ref{xilambdat}) follows from the definitions of
  $\xi_{\lambda}$ and $\boldsymbol{Y}_w (\lambda)$ and (\ref{xprimeev}). As for
  (\ref{xicomdiag}), consider the diagram
\begin{equation*}
\begin{CD}
\mathcal{H}_v@>{\qquad\sigma\qquad}>>\mathcal{H}@>\xi'_{-w_0\lambda}>>\mathcal{O}(\hat T)@>{\qquad\theta\qquad}>>\mathcal{O}(\hat T)\\    
@V(1+T_i)\times VV @V q^{-1}(1+t_i)VV @VV(1-q^{-1}\boldsymbol{z}^\alpha)\partial'_\alpha V @VV(1-q^{-1}\boldsymbol{z}^{-\alpha})\partial_\alpha V\\
\mathcal{H}_v@>{\qquad\sigma\qquad}>>\mathcal{H}@>\xi'_{-w_0\lambda}>>\mathcal{O}(\hat T)@>{\qquad\theta\qquad}>>\mathcal{O}(\hat T)\\
\end{CD}
\end{equation*}
The commutativity of the left square follows from (\ref{sigmat}).
The commutativity of the middle square is (\ref{primehocd}).
The commutativity of the right square is the fact that $\theta \partial_\alpha \theta = \partial_\alpha'$.
The composition on the top is $\hbox{ev}\circ\xi_\lambda$ and the statement follows.
\end{proof}

The (extended) affine algebra $\tilde{\mathcal{H}}_v$ is generated by
$\mathcal{H}_v$ and another commutative subalgebra $\zeta^{\Lambda}$
isomorphic to the weight lattice $\Lambda$. If $\lambda \in \Lambda$ let
$\zeta^{\lambda}$ be the corresponding element of $\zeta^{\Lambda}$. To
complete the presentation of $\tilde{\mathcal{H}}$ we have the relation
\begin{equation}
  \label{bernstein} T_i \zeta^{\lambda} - \zeta^{s_i \lambda} T_i =
  \zeta^{\lambda} T_i - T_i \zeta^{s_i \lambda} = \left( \frac{v - 1}{1 -
  \zeta^{- \alpha_i}} \right) (\zeta^{\lambda} - \zeta^{s_i \lambda}) .
\end{equation}
\begin{lemma}
  Let $\lambda$ run through the dominant weights, and for each $\lambda$ let
  $w$ run through a set of coset representatives for $W / W_{\lambda}$ where
  $W_{\lambda}$ is the stabilizer of $\lambda$. Then $\boldsymbol{Y}_w
  (\lambda)$ runs through a basis of the $\mathbb{C}(v)$-vector space
  $\mathbb{C}(v) \otimes \mathcal{O}( \hat{T})$. Moreover
  \begin{equation}
    \label{xidsdecomp} \mathbb{C}[v,v^{-1}] \otimes \mathcal{O}( \hat{T}) =
    \bigoplus_{\text{$\lambda$ dominant}} \xi_{\lambda} (\mathcal{H}_v) .
  \end{equation}
\end{lemma}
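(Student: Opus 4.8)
The plan is to recast both assertions in terms of the cyclic $\mathcal{H}_v$-modules produced by $\xi_\lambda$ and then control their leading behaviour through the two accessible specializations. The starting point is that, by the commutativity (\ref{xicomdiag}) together with (\ref{xilambdat}), the map $\xi_\lambda$ is a homomorphism of left $\mathcal{H}_v$-modules: since $\mathfrak{D}_i=1+\mathfrak{T}_i$ and the elements $1+T_i$ generate $\mathcal{H}_v$ as an algebra, left multiplication by each generator is intertwined with the action of $\mathfrak{D}_i$. Hence $\xi_\lambda(\mathcal{H}_v)=\mathcal{H}_v\cdot\xi_\lambda(1)=\mathcal{H}_v\cdot\boldsymbol{z}^{\lambda}$, and by Theorem~\ref{amazing} this is exactly the $\mathbb{C}[v,v^{-1}]$-span of $\{\boldsymbol{Y}_w(\lambda)=T_w\cdot\boldsymbol{z}^{\lambda}:w\in W\}$. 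Thus the lemma is really a statement about how the cyclic modules $\mathcal{H}_v\boldsymbol{z}^{\lambda}$ fit together, and it suffices to understand their leading terms.

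First I would prove linear independence. By Theorem~\ref{wasthmone} each $\boldsymbol{Y}_w(\lambda)$ lies in $\mathbb{C}[v]\otimes\mathcal{O}(\hat T)$, and by (\ref{xasdc}) its value at $v=0$ is the Demazure atom
\[ \boldsymbol{Y}_w(\lambda)\big|_{v=0}=\sum_{u\leqslant w}(-1)^{l(w)-l(u)}\,\partial_u\boldsymbol{z}^{\lambda}. \]
Because $\partial_i\boldsymbol{z}^{\lambda}=\boldsymbol{z}^{\lambda}$ whenever $s_i$ fixes $\lambda$, this atom vanishes unless $w$ is the minimal representative of its coset in $W/W_{\lambda}$, and for minimal $w$ it occupies the top graded piece (with leading monomial $\boldsymbol{z}^{w\lambda}$) of the filtration of $\mathcal{O}(\hat T)$ by the dominance order on the dominant representative $\mu^+$ of a weight. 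Since $(\lambda,wW_{\lambda})\mapsto w\lambda$ is a bijection of the index set of the lemma onto $\Lambda$, the atoms are triangular against the monomial basis and so form a $\mathbb{C}$-basis of $\mathcal{O}(\hat T)$. A finite $\mathbb{C}(v)$-linear relation among the $\boldsymbol{Y}_w(\lambda)$, after clearing denominators and dividing out the largest common power of $v$, would reduce modulo $v$ to a nontrivial relation among these atoms; this gives independence over $\mathbb{C}(v)$, and the same reduction computes the rank statement needed for (\ref{xidsdecomp}).

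The remaining content is spanning, equivalently the exhaustion and directness in (\ref{xidsdecomp}). I would work in the associated graded of the dominance filtration above, whose $\lambda$-graded piece has the orbit monomials $\{\boldsymbol{z}^{\mu}:\mu\in W\lambda\}$ as a basis. Modulo $v$, the atom computation shows that the minimal-coset-representative elements of $\xi_\lambda(\mathcal{H}_v)$ map isomorphically onto this $\lambda$-graded piece, which is the mechanism that should promote the $v=0$ basis to a $\mathbb{C}(v)$-basis and split $\mathbb{C}[v,v^{-1}]\otimes\mathcal{O}(\hat T)$ into the cyclic submodules $\xi_\lambda(\mathcal{H}_v)$. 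I would try to deduce the directness formally from the basis statement, and the exhaustion degree by degree in the dominance filtration, using that within each filtration step only finitely many orbits intervene.

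The hard part will be exactly this last step, and it is worth flagging precisely. At generic $v$ the operator $\mathfrak{D}_i=(1-v\boldsymbol{z}^{-\alpha_i})\partial_i$ in (\ref{frakddef}) sends $\boldsymbol{z}^{\lambda}$ to terms whose weights spill \emph{beyond} the weights of the irreducible $V_{\lambda}$: the factor $1-v\boldsymbol{z}^{-\alpha_i}$ shifts part of the support by $-\alpha_i$, so $\boldsymbol{Y}_w(\lambda)$ acquires $v$-divisible contributions at weights $\mu$ with $\mu^+\succ\lambda$. Consequently the dominance filtration is not preserved away from $v=0$, and the transition between $\{\boldsymbol{Y}_w(\lambda)\}$ and $\{\boldsymbol{z}^{\mu}\}$ is only unitriangular modulo $v$, with correction terms that move \emph{upward} in the filtration. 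The real obstacle is therefore to show that this infinite, $v$-divisibly perturbed transition matrix is invertible over $\mathbb{C}[v,v^{-1}]$; I would attack it by a Nakayama-type argument organized so that each correction strictly raises the $v$-order relative to its filtration jump, so that the perturbation is locally finite in the appropriate order. Making this control rigorous — and thereby ruling out that the upward spill prevents the cyclic modules from assembling to the whole of $\mathbb{C}[v,v^{-1}]\otimes\mathcal{O}(\hat T)$ — is the crux on which the proof stands or falls.
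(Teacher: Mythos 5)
Your reduction of the lemma to the cyclic modules $\xi_\lambda(\mathcal{H}_v)=\mathcal{H}_v\cdot\boldsymbol{z}^{\lambda}$ is correct (though note that invoking Theorem~\ref{amazing} here is a forward reference; the identification already follows from (\ref{xilambdat}) and the invertibility of $T_{w_0}$), and your linear--independence argument --- clear denominators, divide out the largest common power of $v$, reduce modulo $v$, and use the triangularity of the Demazure atoms $\sum_{u\leqslant w}(-1)^{l(w)-l(u)}\partial_u\boldsymbol{z}^{\lambda}$ against the monomials --- is sound and more detailed than anything in the paper. But the spanning half, which is the entire content of (\ref{xidsdecomp}), is not proved: you name the invertibility of the upward-perturbed transition matrix as ``the crux on which the proof stands or falls'' and offer only the shape of a Nakayama-type argument. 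That is a genuine gap. The paper itself disposes of both halves in one sentence: it orders the weights by $w_1\lambda_1\preccurlyeq w_2\lambda_2$ when $\lambda_1<\lambda_2$ in dominance, or $\lambda_1=\lambda_2$ and $w_1\leqslant w_2$ in Bruhat order, and asserts that $\boldsymbol{Y}_w(\lambda)=\boldsymbol{z}^{w\lambda}$ plus terms strictly lower in this order.

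Your diagnosis of why the spanning step is delicate is, however, exactly on target, and it shows that the paper's triangularity claim cannot be taken at face value. The factor $1-v\boldsymbol{z}^{-\alpha_i}$ in (\ref{frakddef}) really does push part of the support of $T_w\cdot\boldsymbol{z}^{\lambda}$ to weights whose dominant representative is strictly \emph{larger} than $\lambda$. Already in rank one, with $\varpi$ the fundamental weight and $\alpha=2\varpi$, one computes
\begin{equation*}
\boldsymbol{Y}_s(k\varpi)=T_s\cdot\boldsymbol{z}^{k\varpi}
=(1-v)\bigl(\boldsymbol{z}^{(k-2)\varpi}+\cdots+\boldsymbol{z}^{-k\varpi}\bigr)-v\,\boldsymbol{z}^{-(k+2)\varpi},
\qquad k\geqslant 1,
\end{equation*}
and the extremal term $\boldsymbol{z}^{-(k+2)\varpi}=\boldsymbol{z}^{s((k+2)\varpi)}$ lies \emph{above} $\boldsymbol{z}^{s(k\varpi)}$ in the paper's order, not below. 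Since that term can only be cancelled by $\boldsymbol{Y}_s(k'\varpi)$ with $k'\geqslant k$, expressing $\boldsymbol{z}^{-\varpi}=\boldsymbol{z}^{-\rho}$ as a finite combination leads to an infinite regress, so the transition matrix is not triangular in any order with finite lower sets. A correct treatment has to repair the normalization or the order --- for instance by working with the $\rho$-shifted generators $\boldsymbol{z}^{-\lambda-\rho}=\zeta^{\lambda}\cdot\boldsymbol{z}^{-\rho}$ suggested by the induced-module description following Theorem~\ref{cherednik}, for which a genuine triangular argument does close --- before either your sketch or the paper's one-line argument becomes a proof. So the gap you flag is real, and the proposal does not close it.
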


\begin{proof}
  Every weight may be
  written as $w \lambda$ with $\lambda$ dominant. Here $\lambda$ is
  uniquely determined, and if there is more than one choice for $w$
  we take the one of smallest length. We make a partial
  order on the weights by $w_1 \lambda_1 \preccurlyeq w_2 \lambda_2$ with
  either $\lambda_1 < \lambda_2$ in the usual partial order on weights, or
  $\lambda_1 = \lambda_2$ and $w_1 \leqslant w_2$ in the Bruhat order. Then
  $\boldsymbol{Y}_w (\lambda) =\boldsymbol{z}^{w \lambda}$ plus terms that are
  lower in the partial order, so these are a basis. The direct sum
  decomposition follows from~(\ref{xilambdat}).
\end{proof}

\begin{theorem}
  \label{cherednik} There is a right action of
  $\tilde{\mathcal{H}}_v$ on $\mathbb{C}[v,v^{-1}] \otimes \mathcal{O}( \hat{T})$
  such that $\xi_{\lambda}$ is a homomorphism of right
  $\mathcal{H}_v$-modules. In this action
  \begin{equation}
    \label{cheredaction} \zeta^{\lambda} \cdot \boldsymbol{z}^{\mu}
    =\boldsymbol{z}^{\mu - \lambda}, \hspace{2em} (1 + T_i) \cdot
    \boldsymbol{z}^{\mu} =\mathfrak{D}_i (\boldsymbol{z}^{\mu}) = \left( \frac{1 -
    v\boldsymbol{z}^{- \alpha_i}}{1 -\boldsymbol{z}^{- \alpha_i}} \right)
    (\boldsymbol{z}^{\mu} -\boldsymbol{z}^{s_i \mu - \alpha_i}) .
  \end{equation}
\end{theorem}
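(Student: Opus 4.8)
The plan is to take the finite Hecke algebra action already established and extend it by letting the lattice act by multiplication, the only genuinely new relation to be checked being the Bernstein relation (\ref{bernstein}). Concretely, I would let $T_i$ act by $\mathfrak{T}_i=\mathfrak{D}_i-1$ and $\zeta^\mu$ act by multiplication by $\boldsymbol{z}^{-\mu}$, which is exactly what (\ref{cheredaction}) forces. The braid and quadratic relations among the $\mathfrak{T}_i$ are Proposition~\ref{braidrel}, and the $\zeta^\mu$ manifestly generate a commutative algebra isomorphic to $\Lambda$, since multiplication operators commute and $\boldsymbol{z}^{-\mu}\boldsymbol{z}^{-\nu}=\boldsymbol{z}^{-\mu-\nu}$. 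Thus the existence statement reduces to showing that these two families of operators together satisfy (\ref{bernstein}): once that is done, $T_i\mapsto\mathfrak{T}_i$ and $\zeta^\mu\mapsto(\text{mult.\ by }\boldsymbol{z}^{-\mu})$ defines a representation of $\tilde{\mathcal{H}}_v$, and (\ref{cheredaction}) holds by construction. (Because the braid and quadratic relations are symmetric under word reversal and the $\mathfrak{T}_i$ are fixed by the anti-automorphism $T_w\mapsto T_{w^{-1}}$, the same operators serve equally for the right-handed action the theorem asks for.)

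The heart of the proof is therefore the Bernstein relation, which I would verify as a direct operator identity on $\mathcal{O}(\hat{T})$. Writing $a=\boldsymbol{z}^{-\lambda}$, $b=\boldsymbol{z}^{-s_i\lambda}$, $t=\boldsymbol{z}^{-\alpha_i}$ and letting $M_g$ denote multiplication by $g$, the task is to compute the commutator of $\mathfrak{D}_i=(1-vt)\partial_i$ with $M_a$. The one real pitfall is that $\partial_i$ in the normalization (\ref{demdef}) does \emph{not} obey the naive Leibniz rule; instead one computes directly, using $(af)(s_i\boldsymbol{z})=b\,f(s_i\boldsymbol{z})$, to get
\[ \mathfrak{D}_i M_a f-M_b\mathfrak{D}_i f=(1-vt)\,\frac{af-tb\,f(s_i\boldsymbol{z})-bf+tb\,f(s_i\boldsymbol{z})}{1-t}=(1-vt)\,\frac{a-b}{1-t}\,f, \]
so the $\partial_i$-part cancels and $\mathfrak{D}_iM_a-M_b\mathfrak{D}_i$ is simply multiplication by $(1-vt)(a-b)/(1-t)$. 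Accounting for the $-1$ in $\mathfrak{T}_i=\mathfrak{D}_i-1$ then gives $\mathfrak{T}_iM_a-M_b\mathfrak{T}_i=$ multiplication by $t(1-v)(a-b)/(1-t)$, which is precisely the operator attached to the right-hand side of (\ref{bernstein}) under $\zeta^{-\alpha_i}\mapsto(\text{mult.\ by }\boldsymbol{z}^{\alpha_i})$. The mirror-image computation yields the second equality in (\ref{bernstein}), consistent with the fact that $\mathfrak{T}_i$ commutes with multiplication by the $s_i$-invariant $a+b$.

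It remains to record that $\xi_\lambda$ is a homomorphism of right $\mathcal{H}_v$-modules and to fix the conventions. Compatibility with the generators $1+T_i$ is exactly Proposition~\ref{commdomwt}, and since the $1+T_i$ generate $\mathcal{H}_v$ it propagates to all of $\mathcal{H}_v$ by multiplicativity; the decomposition (\ref{xidsdecomp}) together with (\ref{xilambdat}) shows that these module maps assemble to the action constructed above on all of $\mathbb{C}[v,v^{-1}]\otimes\mathcal{O}(\hat{T})$. The only bookkeeping subtlety is left- versus right-handedness: Proposition~\ref{commdomwt} is phrased via \emph{left} multiplication by $1+T_i$, and this is converted to the asserted \emph{right} module structure through the anti-homomorphism $\sigma$ built into the definition of $\xi_\lambda$ (via (\ref{sigmat})) together with the anti-automorphism $T_w\mapsto T_{w^{-1}}$ under which the $\mathfrak{T}_i$ are symmetric. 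I expect the commutator computation for (\ref{bernstein}) to be the only real obstacle; everything else is formal once the twisted behaviour of $\partial_i$ is handled correctly.
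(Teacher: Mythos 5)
Your proposal is correct, and it reaches the same two load-bearing points as the paper: the Bernstein relation (\ref{bernstein}) must be verified by hand, and the equivariance of $\xi_\lambda$ comes from the commutative diagram (\ref{xicomdiag}). The packaging differs in two ways worth noting. First, you verify Bernstein as a single operator identity, computing the commutator of $\mathfrak{D}_i$ with the multiplication operator $M_{\boldsymbol{z}^{-\lambda}}$ and finding that the $\partial_i$-contributions cancel, leaving multiplication by $(1-v\boldsymbol{z}^{-\alpha_i})(\boldsymbol{z}^{-\lambda}-\boldsymbol{z}^{-s_i\lambda})/(1-\boldsymbol{z}^{-\alpha_i})$; the paper instead evaluates both sides of the equivalent form (\ref{equivbernstein}) on each monomial $\boldsymbol{z}^{\mu}$. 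The computations are the same in substance, but yours is cleaner and makes visible why the relation holds (the twisted Leibniz behaviour of $\partial_i$). Second, and more significantly, you build the $\tilde{\mathcal{H}}_v$-action by declaring $T_i\mapsto\mathfrak{T}_i$ and citing Proposition~\ref{braidrel} for the finite Hecke relations, then deduce afterwards that $\xi_\lambda$ is a module homomorphism from Proposition~\ref{commdomwt} plus the fact that the elements $1+T_i$ generate $\mathcal{H}_v$. The paper goes the other way: it transports the regular representation of $\mathcal{H}_v$ through the maps $\xi_\lambda$ using the decomposition (\ref{xidsdecomp}) and reads off the operator formula from the diagram. Your order of argument has a small advantage: since $\xi_\lambda$ is not injective when the stabilizer $W_\lambda$ is nontrivial, transporting a module structure through it requires knowing its kernel is a right ideal, a point the paper's phrasing glosses over and which your construction never needs. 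Your closing remarks on the left/right bookkeeping via $\sigma$ are also apt, since the paper's own Bernstein verification silently uses the composition order your commutator computes.
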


This action of $\tilde{\mathcal{H}}_v$ appeared previously in
Lusztig~\cite{LusztigK1}. The interpretation here in terms of Whittaker
functions appears to be new.

\medbreak
\begin{proof}
  We make use of (\ref{xidsdecomp}). Each summand may be given a right
  $\mathcal{H}_v$-module structure such that $\xi_{\lambda}$ is a right
  $\mathcal{H}_v$-module homomorphism. Thus $\mathbb{C}[v,v^{-1}] \otimes
  \mathcal{O}( \hat{T})$ becomes an $\mathcal{H}_v$-module. By the
  commutativity of (\ref{xicomdiag})
  \[ (1 + T_i) \cdot \xi_{\lambda} (\phi) = \xi_{\lambda} ((1 + T_i) \phi)
     =\mathfrak{D}_i \xi_{\lambda} (\phi), \]
  which gives the second identity in (\ref{cheredaction}). As for the first,
  we make this a definition and then must check compatibility with the
  relation (\ref{bernstein}) in the equivalent form
  \begin{equation}
    \label{equivbernstein} (T_i + 1) \zeta^{\lambda} - \zeta^{s_i \lambda}
    (T_i + 1) = \left( \frac{v - \zeta^{- \alpha_i}}{1 - \zeta^{- \alpha_i}}
    \right) (\zeta^{\lambda} - \zeta^{s_i \lambda}) .
  \end{equation}
  We will write $s = s_i$ and $\alpha = \alpha_i$. Applying the left-hand side
  of (\ref{equivbernstein}) to $\boldsymbol{z}^{\mu}$ gives
  \[ \left( \frac{1 - v\boldsymbol{z}^{- \alpha}}{1 -\boldsymbol{z}^{- \alpha}}
     \right) (\boldsymbol{z}^{\mu - \lambda} -\boldsymbol{z}^{- \alpha + s \mu - s
     \lambda}) - \left( \frac{1 - v\boldsymbol{z}^{- \alpha}}{1 -\boldsymbol{z}^{-
     \alpha}} \right) (\boldsymbol{z}^{\mu - s \lambda} -\boldsymbol{z}^{- \alpha
     + s \mu - s \lambda}) \]
  This equals
  \[ \left( \frac{1 - v\boldsymbol{z}^{- \alpha}}{1 -\boldsymbol{z}^{- \alpha}}
     \right) (\boldsymbol{z}^{\mu - \lambda} -\boldsymbol{z}^{\mu - s \lambda})
     =\boldsymbol{z}^{\mu} \left( \frac{v -\boldsymbol{z}^{\alpha}}{1
     -\boldsymbol{z}^{\alpha}} \right) (\boldsymbol{z}^{- \lambda}
     -\boldsymbol{z}^{- s \lambda}) . \]
  which equals the right-hand side of (\ref{equivbernstein}) using the action
  $\zeta^{\lambda} \cdot \boldsymbol{z}^{\mu} =\boldsymbol{z}^{\mu - \lambda}$, as
  desired.
\end{proof}

The maps $\xi_\lambda$ have two applications in this paper. The first is
that they allow one to extend the action of $\mathcal{H}_v$ to
all of $\widetilde{\mathcal{H}}_v$ as in the previous theorem. The second
application is Theorem~\ref{amazing}.

\begin{proof}[Proof of Theorem~\ref{amazing}]
  Since $l (w_0) = l (w) + l (w_0 w^{- 1})$ we have $T_{(w w_0)^{- 1}} T_w =
  T_{w_0^{- 1}}$ or $T_{(w w_0)^{- 1}}^{- 1} = T_w T_{w_0^{- 1}}^{- 1} .$
  Using (\ref{xilambdat}) and the fact that $\xi_{\lambda}$ is
  $\mathcal{H}_v$-equivariant,
  \[ \boldsymbol{Y}_w (\lambda) = \xi_{\lambda} (T_{(w w_0)^{- 1}}^{- 1}) = T_w
     \cdot \xi_{\lambda} (T_{w_0^{- 1}}^{- 1}) . \]
  But using Proposition~\ref{correctionterms} and (\ref{xilambdat}) again,
  $\xi_{\lambda} (T_{w_0}^{- 1}) =\boldsymbol{Y}_1 (\lambda)
  =\boldsymbol{z}^{\lambda}$.
\end{proof}

\section{Complements}

It is easy to see that $\partial_i \boldsymbol{z}^{- \rho} = 0$ for each
Demazure operator $\partial_i$. As a consequence, $\mathfrak{D}_i
\boldsymbol{z}^{- \rho} = 0$ and so in the action of Theorem~\ref{cherednik} we
have
\[ T_i \cdot \boldsymbol{z}^{- \rho} = -\boldsymbol{z}^{- \rho} . \]
Hence this vector spans a one-dimensional $\mathcal{H}_v$-invariant subspace
affording the sign representation of $\mathcal{H}_v$.

\begin{theorem}
  The $\tilde{\mathcal{H}}_v$-module $\mathbb{C}[v,v^{-1}] \otimes \mathcal{O}(
  \hat{T})$ is isomorphic to the representation of $\tilde{\mathcal{H}}_v$
  induced from the sign representation of $\mathcal{H}_v$.
\end{theorem}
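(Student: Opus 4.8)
The plan is to exhibit the isomorphism predicted by Frobenius reciprocity and to check that it carries one natural $\mathbb{C}[v,v^{-1}]$-basis to another. Write $\mathcal{R}=\mathbb{C}[v,v^{-1}]\otimes\mathcal{O}(\hat{T})$ and let $\varepsilon$ denote the sign representation of $\mathcal{H}_v$, the one-dimensional $\mathbb{C}[v,v^{-1}]$-module on which each $T_i$ acts by $-1$. As recorded just before the statement, $\mathbb{C}[v,v^{-1}]\cdot\boldsymbol{z}^{-\rho}$ is a copy of $\varepsilon$ inside $\mathcal{R}$, since $(1+T_i)\cdot\boldsymbol{z}^{-\rho}=\mathfrak{D}_i\boldsymbol{z}^{-\rho}=0$. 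By the universal property of the induced module, the inclusion $\varepsilon\hookrightarrow\mathcal{R}$ of $\mathcal{H}_v$-modules extends uniquely to a homomorphism of $\tilde{\mathcal{H}}_v$-modules
\[ \Psi:\ \mathrm{Ind}_{\mathcal{H}_v}^{\tilde{\mathcal{H}}_v}\varepsilon\ \longrightarrow\ \mathcal{R}, \]
determined by sending the canonical generator $e$ (the image of $1\in\varepsilon$) to $\boldsymbol{z}^{-\rho}$. The whole task reduces to showing that $\Psi$ is bijective.

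First I would pin down the structure of the source. From the Bernstein presentation---in particular the relation (\ref{bernstein})---the algebra $\tilde{\mathcal{H}}_v$ is free as a left $\mathcal{H}_v$-module with basis $\{\zeta^{\lambda}\}_{\lambda\in\Lambda}$, so that $\tilde{\mathcal{H}}_v=\bigoplus_{\lambda\in\Lambda}\mathcal{H}_v\zeta^{\lambda}$. Since $\varepsilon$ is one-dimensional over $\mathbb{C}[v,v^{-1}]$, it follows that $\mathrm{Ind}_{\mathcal{H}_v}^{\tilde{\mathcal{H}}_v}\varepsilon=\varepsilon\otimes_{\mathcal{H}_v}\tilde{\mathcal{H}}_v$ is free over $\mathbb{C}[v,v^{-1}]$ with basis $\{e\cdot\zeta^{\lambda}\}_{\lambda\in\Lambda}$.

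Next I would compute the images under $\Psi$. Because $\Psi$ is $\tilde{\mathcal{H}}_v$-equivariant and $\zeta^{\lambda}$ acts on $\mathcal{R}$ by the translation $\boldsymbol{z}^{\mu}\mapsto\boldsymbol{z}^{\mu-\lambda}$ of (\ref{cheredaction}),
\[ \Psi(e\cdot\zeta^{\lambda})=\Psi(e)\cdot\zeta^{\lambda}=\boldsymbol{z}^{-\rho-\lambda}. \]
As $\lambda$ runs over $\Lambda$ (and $\rho\in\Lambda$), the weights $-\rho-\lambda$ run over all of $\Lambda$, so the elements $\boldsymbol{z}^{-\rho-\lambda}$ are precisely the monomial $\mathbb{C}[v,v^{-1}]$-basis $\{\boldsymbol{z}^{\mu}\}_{\mu\in\Lambda}$ of $\mathcal{R}$. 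Thus $\Psi$ carries the basis $\{e\cdot\zeta^{\lambda}\}$ of the source bijectively onto a basis of the target; hence it is an isomorphism of $\mathbb{C}[v,v^{-1}]$-modules, and being $\tilde{\mathcal{H}}_v$-equivariant, an isomorphism of $\tilde{\mathcal{H}}_v$-modules.

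The step I expect to require the most care is the freeness of $\tilde{\mathcal{H}}_v$ over $\mathcal{H}_v$ (equivalently, the Bernstein--Lusztig PBW basis), which underlies the identification of the source's basis; once that is in hand, everything reduces to the transitivity of the $\zeta^{\Lambda}$-translation action on monomials, which is immediate from (\ref{cheredaction}). A minor point worth recording is that $\rho\in\Lambda$, so that $\boldsymbol{z}^{-\rho}\in\mathcal{O}(\hat{T})$ and the translated weights $-\rho-\lambda$ indeed exhaust $\Lambda$; this is already implicit in the use of $\boldsymbol{z}^{-\rho}$ throughout.
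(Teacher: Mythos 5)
Your proposal is correct and follows essentially the same route as the paper's own (much terser) proof: both rest on the observation that the coset representatives $\zeta^{\lambda}$ of $\mathcal{H}_v$ in $\tilde{\mathcal{H}}_v$ carry the sign vector $\boldsymbol{z}^{-\rho}$ to the monomial basis $\boldsymbol{z}^{-\lambda-\rho}$ of $\mathbb{C}[v,v^{-1}]\otimes\mathcal{O}(\hat{T})$. You merely make explicit what the paper declares ``clear,'' namely the universal property giving the map $\Psi$ and the basis-matching argument for bijectivity.
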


\begin{proof}
  The module $\mathbb{C}[v,v^{-1}] \otimes \mathcal{O}( \hat{T})$ is generated by
  this vector $\boldsymbol{z}^{- \rho}$ since each coset of $\mathcal{H}_v$ has
  as a coset representation $\zeta^{\lambda}$ for a unique weight $\lambda$,
  which maps $\boldsymbol{z}^{- \rho}$ to a basis vector $\boldsymbol{z}^{-
  \lambda - \rho}$. The statement is thus clear.
\end{proof}

Finally, let us define certain elements $\boldsymbol{C}_w (\lambda)$ that
resemble the $\boldsymbol{X}_w (\lambda)$ but may in some sense be more natural.
We begin by recalling the Kazhdan-Lusztig basis $C_w'$ for the (finite) Hecke
algebra $\mathcal{H}_v$. It is uniquely characterized by the following
properties. First, $\overline{C_w}' = C'_w$, where $x \longmapsto \bar{x}$ is the
Kazhdan-Lusztig involution that sends $q \rightarrow q^{- 1}$ and $T_w
\rightarrow T_{w^{- 1}}^{- 1}$; and second, $v^{l (w) / 2} C'_w = \sum_{u
\leqslant w} P_{u, w} (v) T_u$, where $P_{u, w} (v) \in \mathbb{Z} [v]$ of
degree $\leqslant \frac{1}{2} (l (w) - l (u) - 1)$ for $y < w$ and $P_{w, w} =
1$. The $P_{u, w}$ are the Kazhdan-Lusztig polynomials. (Note that $P$ is
used differently in other parts of our paper.) Let $\boldsymbol{C}_w (\lambda) =
v^{l (w) / 2} C'_w \cdot \boldsymbol{z}^{\lambda}$, where the action is that of
Theorem~\ref{cherednik}.

\begin{proposition}
  Let $\lambda$ be a dominant weight and $w \in W$. We have $\boldsymbol{X}_w
  (\lambda) =\boldsymbol{C}_w (\lambda)$ if and only if the Schubert variety
  $X_w$ is rationally smooth in the sense of Kazhdan and Lusztig~{\cite{KL}}.
\end{proposition}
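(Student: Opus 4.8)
The plan is to expand both $\boldsymbol{X}_w(\lambda)$ and $\boldsymbol{C}_w(\lambda)$ in the vectors $T_u\cdot\boldsymbol{z}^{\lambda}$, to compare coefficients, and thereby to reduce the identity $\boldsymbol{X}_w(\lambda)=\boldsymbol{C}_w(\lambda)$ to the numerical condition that all the relevant Kazhdan--Lusztig polynomials equal $1$; this last condition is exactly the known criterion for rational smoothness, which I would quote from~\cite{KL}.

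First I would use Theorem~\ref{amazing} to write $\boldsymbol{X}_w(\lambda)=\sum_{u\leqslant w}T_u\cdot\boldsymbol{z}^{\lambda}$, and the defining expansion $v^{l(w)/2}C'_w=\sum_{u\leqslant w}P_{u,w}(v)\,T_u$ of the Kazhdan--Lusztig basis element to obtain $\boldsymbol{C}_w(\lambda)=\sum_{u\leqslant w}P_{u,w}(v)\,T_u\cdot\boldsymbol{z}^{\lambda}$, where the action is the one from Theorem~\ref{cherednik}. Subtracting these two expressions, and using $P_{w,w}=1$ to cancel the top term, gives
\[ \boldsymbol{X}_w(\lambda)-\boldsymbol{C}_w(\lambda)=\sum_{u<w}\bigl(1-P_{u,w}(v)\bigr)\,T_u\cdot\boldsymbol{z}^{\lambda}=\sum_{u<w}\bigl(1-P_{u,w}(v)\bigr)\,\boldsymbol{Y}_u(\lambda), \]
the last equality being again the first formula of Theorem~\ref{amazing}.

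The key step is linear independence of the vectors $\{\boldsymbol{Y}_u(\lambda)\}_{u\leqslant w}$. Taking $\lambda$ to be a regular dominant weight, so that $W_{\lambda}$ is trivial, the lemma immediately preceding Theorem~\ref{cherednik} shows that $\{\boldsymbol{Y}_u(\lambda)\}_{u\in W}$ is a subfamily of a $\mathbb{C}(v)$-basis of $\mathbb{C}(v)\otimes\mathcal{O}(\hat{T})$, hence linearly independent over $\mathbb{C}(v)$; concretely, $\boldsymbol{Y}_u(\lambda)=\boldsymbol{z}^{u\lambda}$ plus terms lower in the partial order of the proof of that lemma, and for regular $\lambda$ the leading weights $u\lambda$ are distinct. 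Therefore the displayed difference vanishes if and only if $1-P_{u,w}(v)=0$ for every $u\leqslant w$. Note that the reverse (``if'') implication is immediate and valid for every dominant $\lambda$, since $P_{u,w}=1$ for all $u$ makes the two expansions coincide term by term.

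Finally I would invoke the Kazhdan--Lusztig rational smoothness criterion from~\cite{KL}: the Schubert variety $X_w$ is rationally smooth if and only if $P_{u,w}=1$ for all $u\leqslant w$. Combined with the previous paragraph this yields the asserted equivalence. The main obstacle is precisely the linear-independence input: it is what lets one pass from an identity of elements of $\mathbb{C}[v,v^{-1}]\otimes\mathcal{O}(\hat{T})$ back to the vanishing of every coefficient $1-P_{u,w}(v)$, and it is the place where one uses (or is forced to use) a regular dominant $\lambda$, since for non-regular $\lambda$ the $\boldsymbol{Y}_u(\lambda)$ with $u$ in a common $W_{\lambda}$-coset become dependent and the coefficients can no longer be read off individually. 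Everything else is bookkeeping with the two expansions and the cited smoothness criterion.
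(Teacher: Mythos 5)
Your proposal is correct and follows essentially the same route as the paper's (two-sentence) proof: expand both sides in the vectors $T_u\cdot\boldsymbol{z}^{\lambda}$, reduce the identity $\boldsymbol{X}_w(\lambda)=\boldsymbol{C}_w(\lambda)$ to the condition $P_{u,w}=1$ for all $u\leqslant w$, and invoke Theorem~A2 of \cite{KL}. You have in fact supplied the one step the paper leaves implicit --- the linear independence of the $\boldsymbol{Y}_u(\lambda)=T_u\cdot\boldsymbol{z}^{\lambda}$ that lets one read off the coefficients $1-P_{u,w}(v)$ --- and you are right that, as you set it up via leading terms $\boldsymbol{z}^{u\lambda}$, this forces $\lambda$ to be regular, whereas the statement allows an arbitrary dominant $\lambda$; the paper's own proof is silent on this point. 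If one wants the ``only if'' direction for every dominant $\lambda$, one can instead specialize $v=1$: Theorem~\ref{qone} together with M\"obius inversion gives $\boldsymbol{Y}_u(\lambda)|_{v=1}=(-1)^{l(u)}\boldsymbol{z}^{u(\rho+\lambda)-\rho}$, and since $\rho+\lambda$ is regular these are linearly independent over $\mathbb{C}$ for \emph{every} dominant $\lambda$. This yields $P_{u,w}(1)=1$ for all $u\leqslant w$, which, combined with the facts that $P_{u,w}$ has constant term $1$ and nonnegative coefficients, forces $P_{u,w}=1$. Modulo that refinement --- which is needed for the paper's argument as much as for yours --- your proof is complete and matches the intended one.
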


\begin{proof}
  Each Kazhdan-Lusztig $P_{u, w}$ with $u \leqslant w$ is a polynomial in $v$
  with constant term equal to $1$, so $\boldsymbol{X}_w (\lambda)
  =\boldsymbol{C}_w (\lambda)$ if and only if $P_{u, w} = 1$ for all $u
  \leqslant w$. By Theorem~A2 of {\cite{KL}}, this is true if and only if
  $X_w$ is rationally smooth.
\end{proof}

\end{document}